\documentclass[11pt]{article}
\usepackage[english]{babel}
\usepackage{amsmath,amsthm,amssymb}
\usepackage{mathrsfs}
\usepackage{bbm}
\usepackage{amsfonts}
\usepackage[margin=0.8in]{geometry}
\usepackage{graphicx}
\usepackage{caption}
\usepackage{subcaption}
\usepackage{wasysym}
\usepackage{enumerate}
\usepackage{algorithm2e}
\usepackage{tabularx}
\usepackage{color}
\usepackage{wrapfig}
\usepackage{todonotes}

\newtheorem{thm}{Theorem}[section]
\newtheorem{ass}[thm]{Assumption}

\newtheorem{lem}[thm]{Lemma}
\newtheorem{prop}[thm]{Proposition}

\theoremstyle{definition}
\newtheorem{defn}[thm]{Definition}

\theoremstyle{remark}
\newtheorem{rem}[thm]{Remark}

\numberwithin{equation}{section}
\newcommand{\R}{\mathbb R}
\newcommand{\eps}{\varepsilon}

\newcommand{\bbC}{\mathbb C}
\newcommand{\bfC}{\mathbf C}

\newcommand{\bbF}{\mathbb F}
\newcommand{\bbT}{\mathbb T}

\newcommand{\bbN}{\mathbb N}

\newcommand{\mcA}{\mathcal{A}}

\newcommand{\mcB}{\mathcal{B}}
\newcommand{\mcC}{\mathcal C}

\newcommand{\mcE}{\mathcal E}

\newcommand{\mcF}{\mathcal F}

\newcommand{\mcT}{\mathcal T}
\newcommand{\mcP}{\mathcal P}

\newcommand{\mcH}{\mathcal H}
\newcommand{\mcK}{\mathcal K}
\newcommand{\mcU}{\mathcal U}

\newcommand{\mcR}{\mathcal R}
\newcommand{\mcS}{\mathcal S}

\newcommand{\mcV}{\mathcal V}

\newcommand{\mcO}{\mathcal{O}}
\newcommand{\E}{\mathbb{E}}
\newcommand{\Prob}{\mathbb{P}}

\newcommand{\bx}{\mathbf{x}}
\newcommand{\by}{\mathbf{y}}
\newcommand{\esssup}{\mathop{\rm{ess}\sup}}
\newcommand{\essinf}{\mathop{\rm{ess}\inf}}

\newcommand{\argmin}{\mathop{\arg\min}}
\newcommand{\ett}{\mathbbm{1}}

\newcommand{\cadlag}{c\`adl\`ag~}

\newcommand{\Pred}{\mathcal{P}}
\newcommand{\Prog}{{\rm Prog}}

\newcommand{\ie}{\textit{i.e.\ }}
\newcommand{\eg}{\textit{e.g.\ }}

\begin{document}

\title{Feedback Stopping Rules in Path-Dependent Controller–Stopper Games}

\author{Magnus Perninge\footnote{M.\ Perninge is with the Department of Mathematics, M\"alardalen University, V\"aster{\aa}s,
Sweden. e-mail: magnus.perninge@mdu.se.}} %
\maketitle
% ----------------------------------------------------------------
\begin{abstract}
We investigate finite-horizon, zero-sum controller–stopper games in which the stopper observes the state process and implements a feedback stopping rule. Building on a nonlinear Snell envelope representation for a related game established in a companion paper, we prove that our game admits a value by extending the associated first-contact principle. Our approach is purely probabilistic and yields an optimal feedback stopping rule for path-dependent systems while allowing for degeneracy in the underlying stochastic differential equation (SDE).

As an application, we consider nonzero-sum controller-stopper games and show that the optimal feedback stopping rule for a zero-sum game appears as a component of a $\varepsilon$-Nash equilibrium for every $\varepsilon>0$.
\end{abstract}

% ----------------------------------------------------------------
\section{Introduction}
The controller-stopper game is a two-player stochastic differential game that appears naturally in a variety of applications ranging from finance to economics and engineering. The game can arise either as a natural consequence of the strategic interaction between two players, or as the result of parameter uncertainty rendering a robust setup. In the path-dependent framework, the underlying stochastic process is the solution to a controlled non-Markovian stochastic differential equation (SDE),
\begin{align}\label{ekv:fwd-sde}
    X^{t,\bx;u}_s=x(s\wedge t)+\int_t^{s\vee t}a(r,X^{t,\bx;u},u_r)dr+\int_t^{s\vee t}\sigma(r,X^{t,\bx;u},u_r)dW_r,\quad\forall s\in [0,T],
\end{align}
where the continuous trajectory $\bx\in C([0,t]\to\R^d)$ models the history of the process at time $t$. In the zero-sum version of the game, the controller influences the dynamics of $X^{t,\bx,u}$ by choosing a progressively measurable control process $(u_s:t\leq s\leq T)$, taking values in the compact set $U\subset\R^d$, with the objective of minimizing
\begin{align*}
  J(t,\bx;u,\tau)&:=\E\Big[\psi(\tau,X^{t,\bx;u})+\int_t^{\tau}f(s,X^{t,\bx;u},u_s)ds\,\Big|\,\mcF_t\Big].
\end{align*}
The stopper, on the other hand, selects a stopping time $\tau$, with the objective of maximizing the same quantity.

Given its importance, the controller-stopper game has been extensively investigated, see \eg \cite{Karatzas2001,Weerasinghe2006,Zamfirescu08,BayraktarHuang2013,BayraktarYao14,NutzZhang15, Choukroun15, EkstromSalami, Bodnariu2024, EkstromDeFinetti, cont-stop-P1}. Within the Markovian framework, \ie when for all times $s\in [0,T]$ and state trajectories $\bx\in C([0,T]\to\R^d)$, the quadruple $(a(s,\bx),\sigma(s,\bx),\psi(s,\bx),f(s,\bx))$ only depends on $\bx$ through is current value $\bx(s)$, a complete characterization of the game value as the unique viscosity solution to a nonlinear variational inequality was established in \cite{BayraktarHuang2013}, while a corresponding BSDE representation was derived in \cite{Choukroun15}.

The multi-dimensional, path-dependent setting was investigated in \cite{Zamfirescu08,BayraktarYao14,NutzZhang15,cont-stop-P1}. By applying a change of measure technique that effectively alter the quadratic variation of the driving noise process \cite{BayraktarYao14} and \cite{NutzZhang15} both allow models with controlled volatility under a non-degeneracy assumption. In a paper accompanying the present work, \cite{cont-stop-P1}, a different change of measure technique was applied, where the control $u$ is replaced by a Poisson point process
\begin{align*}
  I^t_s=\beta_0+\int_t^s\int_U(e-I^t_{r-})\mu(dr,de),\quad\forall s\in [t,T]
\end{align*}
and the controller chooses the distribution of $I^t$ by altering the compensator of $\mu$. The introduction of an auxiliary process eliminates the need for a non-degeneracy assumption and an important feature of \cite{cont-stop-P1} is that it allows for situations where the volatility matrix $\sigma^\top\sigma(s,\bx)$ becomes singular.

Specifically, the accompanying paper \cite{cont-stop-P1} derives a nonlinear Snell envelope representation for the game value
\begin{align}\label{ekv:value-strat}
  v(t,\bx)=\essinf_{u\in\mcU_t}\esssup_{\tau\in\mcT_t}J(t,\bx;u,\tau)=\esssup_{\tau^S\in\mcT^S_t}\essinf_{u\in\mcU_t}J(t,\bx;u,\tau^S(u)),
\end{align}
where $\mcU_t$ is the set of admissible controls, $\mcT_t$ is the set of stopping times $\tau\geq t$ and $\mcT^S_t$ is the set of non-anticipative maps $\tau^S:\mcU_t\to\mcT_t$.

As pointed out in several works, implementing a strategy from $\mcT^S_t$, would require either that the stopper observes the opponent’s control in real time, which is unrealistic in most applications, or that the controller reveals their intended control actions to the stopper, which is incompatible with the non-cooperative nature of the game. In the present work, we extend the results from \cite{cont-stop-P1} by showing that the value in \eqref{ekv:value-strat} coincides with the value of a controller stopper game on the same cost/reward functional, where the stopper is only allowed to implement feedback stoping rules. That is,
\begin{align}\label{ekv:value-feedback}
  v(t,\bx)=\essinf_{u\in\mcU_t}\esssup_{\tau^F\in\mcT^F_t}J(t,\bx;u,\tau^F(X^{t,\bx;u})) = \esssup_{\tau^F\in\mcT^F_t}\essinf_{u\in\mcU_t}J(t,\bx;u,\tau^F(X^{t,\bx;u})),
\end{align}
where $\mcT^F_t$ is the set of non-anticipative maps $\tau^F:C([0,T]\to\R^d)\to [t,T]$.

Our approach is based on extending the first contact principle to controller-stopper games by proving that the feedback stopping rule
\begin{align}\label{ekv:opt-stop}
  \tau^{F,*}_t:\bx\mapsto \inf\{s\geq t:v(s,\bx)=\psi(s,\bx)\}\in\mcT^F_t
\end{align}
induces an optimal non-anticipative strategy in \eqref{ekv:value-strat}.

As an application of our first contact principle we consider nonzero-sum games where the objective of the controller is to maximize
\begin{align*}
  J^C(t,\bx;u,\tau)&:=\E\Big[\psi^C(\tau,X^{t,\bx;u})+\int_0^{\tau}f^C(s,X^{t,\bx;u},u_s)ds\Big].
\end{align*}
We prove that, in addition to being optimal for the zero-sum game, we can for each $\eps>0$, find a corresponding control $u^\eps\in\mcU_t$ such that
\begin{align}\label{ekv:eps-Nash}
\begin{cases}
  J^{C}(t,\bx;u^\eps;\tau^{F,*}(X^{t,\bx;u^\eps}))\geq J^{C}(t,\bx;u;\tau^{F,*}(X^{t,\bx;u}))-\eps,
  \\
  J(t,\bx;u^\eps;\tau^{F,*}(X^{t,\bx;u^\eps}))\geq J(t,\bx;u^\eps;\tau^F(X^{t,\bx;u^\eps}))-\eps,
\end{cases}
\end{align}
for all $u\in\mcU$ and $\tau^F\in\mcT^F$. The pair $(u^\eps,\tau^{F,*})$ is referred to as an $\eps$-Nash-equilibrium and approximates an equilibrium in the sense that neither player can improve its payoff by more than $\eps$ through unilateral deviation.

\medskip

\textbf{Outline} The next section provides preliminary definitions, states the main assumptions used throughout the paper, and recalls several prior results that are needed in the subsequent analysis.

Section~\ref{sec:main-res} formally states the main result of the paper, summarized in Theorem~\ref{thm:tau-F-robust}. Moreover, it is shown that the theorem is equivalent to a duality result involving the optimal control problem terminated at $\tau^{F,*}(X^{u})$. Section~\ref{sec:opt-stop-zs} provides a detailed proof of the corresponding duality result thereby proving Theorem~\ref{thm:tau-F-robust}.

In Section~\ref{sec:nz}, we apply our results to the nonzero-sum version of the controller-stopper game and prove that the optimal feedback stopping rule for the zero-sum game forms a component of $\eps$-Nash equilibria.\\

\section{Preliminaries}
\subsection{Probabilistic setup}
To save notation, rather than working with two separate probability spaces, we follow the convention of \cite{cont-stop-P1} and use the same probability space as a basis for both the primal and the dual formulations of the controller-stopper game. We thus let $(\Omega,\mcF,\Prob)$ be a complete probability space supporting a $d$-dimensional Brownian motion, denoted by $W$, and an independent Poisson random measure $\mu$  on $[0,T]\times U$ with compensator $dt\otimes \lambda(de)$, where $\lambda$ is a finite measure on $U$ with full topological support. We denote by $\bbF:=(\mcF_t)_{t\geq 0}$ the augmented natural filtration generated by $W$, while $\bbF^{\mcR}:=(\mcF^{\mcR}_t)_{t\geq 0}$ is the augmented natural filtration generated by both $W$ and $\mu$.

In this setting, for each $E\in\mathcal{B}(U)$, the compensated process
\[
\tilde{\mu}([0,t],E) := \mu([0,t],E) - t\lambda(E), \quad t\geq 0,
\]
is an $\mathbb{F}^{\mathcal R}$-martingale.

\subsection{Notations}

\noindent Throughout, we use the following notation, where $T> 0$ is the maximal duration of the game:
\begin{itemize}
  \item We denote by $\bfC^d$ the set of continuous functions $\bx:[0,T]\to\R^d$ equipped with the supremum norm $\|\cdot\|_T$, where $\|\bx\|_t:=\sup_{s\in[0,t]}|\bx(s)|$.
  \item We let $\bbC:=(\mcC_t)_{t\in [0,T]}$ be the filtration generated by the coordinate map, $\mcC_t:=\sigma(\bx\mapsto\bx(s):s\in [0,t])$ on $\bfC^d$.
  \item For any two maps, $\bx,\by:[0,T]\to\R^d$, we define concatenation at $t\in [0,T]$ as $(\bx \otimes_t \by)(s):=\ett_{[0,t]}(s)\bx(s) + \ett_{(t,T]}(s)\by(s)$ for all $s\in [0,T]$.
  \item We define the set $\Lambda:=[0,T]\times \bfC^d$ which we equip with the pseudo-metric
      \begin{align}
        \mathbf d_\Lambda[(t,\bx),(t',\bx')]:=|t'-t|+\|\bx'(\cdot\wedge t')-\bx(\cdot\wedge t)\|_T.
      \end{align}
  \item For a measure space $(\tilde\Omega,\tilde\mcF)$ and a filtration $\tilde\bbF$ on $\tilde\mcF$ we let $\Prog(\tilde\bbF)$ (resp. $\Pred(\tilde\bbF)$) denote the $\sigma$-algebra of $\tilde\bbF$-progressively (resp. $\tilde\bbF$-predictably) measurable subsets of $\R_+\times \tilde\Omega$.
  \item We let $\mcU_t$ be the set of $\Prog(\bbF)$-measurable processes $(u_s:t\leq s\leq T)$ valued in the compact set $U\subset\R^d$.
  \item We let $\mcT^F$ be the set of maps $\tau^F:\bfC^d\to [0,T]$ such that $\{\bx\in\bfC^d:\tau^F(\bx)\le t\}\in\mcC_t$ for all $t\in[0,T]$. For each $t\in[0,T]$, we let $\mcT_t^F$ denote the subset of $\mcT^F$ consisting of maps $\tau^F$ satisfying $\tau^F(\bx)\ge t$ for all $\bx\in\bfC^d$.
  \item We let $\mcT$ be the set of all $[0,T]$-valued $\bbF$-stopping times and for each $\eta\in\mcT$, we let $\mcT_\eta$ be the corresponding subset of stopping times $\tau$ such that $\tau\geq \eta$, $\Prob$-a.s.
  \item Similarly, we let $\mcT^\mcR$ be the set of all $[0,T]$-valued $\bbF^{\mcR}$-stopping times and for each $\eta\in\mcT^\mcR$, we let $\mcT^\mcR_\eta$ be the corresponding subset of stopping times $\tau$ such that $\tau\geq \eta$, $\Prob$-a.s.
  \item We let $\mcV$ be the set of all $\Pred(\bbF^\mcR)\otimes\mcB(U)$-measurable bounded maps $\nu:[0,T]\times\Omega\times U\to [0,\infty),\,(t,\omega,e)\mapsto\nu_t(\omega,e)$. Moreover, for $n\in\bbN$, we let $\mcV^n$ denote the subset of maps $\nu:[0,T]\times\Omega\times U\to [0,n]$.
  \item For $p\geq 1$, $t\in [0,T]$ and $\tau\in\mcT^\mcR_t$, we let $\mcS^{p}_{t,\tau}$ be the set of all $\R$-valued, $\Prog(\bbF^{\mcR})$-measurable \cadlag processes $(Z_s: s\in[t,\tau])$ such that $\|Z\|_{\mcS^{p,\tau}}:=\E\Big[\sup_{s\in [t,\tau]} |Z_s|^p\Big]^{1/p}<\infty$.
      When $\tau=T$, we use the shorter notation $\mcS^{p}_t$.
  \item We let $\mcA^{p}_{t,\tau}$ denote the subset of $\mcS^{p}_{t,\tau}$ consisting of all $\mcP(\bbF^{\mcR})$-measurable, nondecreasing processes $Z$ satisfying $Z_t=0$. Moreover, we let $\mcA^p_t:=\mcA^{p}_{t,T}$.
  \item We let $\mcH^{p}_{t,\tau}(W)$ denote the set of all $\R^d$-valued, $\mcP(\bbF^{\mcR})$-measurable processes $(Z_s: s\in[t,\tau])$ such that $\|Z\|_{\mcH^{p}_{t,\tau}(W)}:=\E\left[(\int_t^\tau |Z_s|^2 ds)^{p/2}\right]^{1/p}<\infty$. When $\tau=T$, we use the notation $\mcH^{p}_t(W)$.
  \item We let $\mcH^{p}_{t,\tau}(\mu)$ denote the set of all $\R$-valued, $\mcP(\bbF^{\mcR})\otimes \mcB(U)$-measurable mappings $(Z_s(e): s\in[t,\tau],e\in U)$ such that $\|Z\|_{\mcH^{p}_{t,\tau}(\mu)}:=\E\Big[\int_t^\tau\!\!\int_U |Z_s(e)|^p\lambda(de)ds\Big]^{1/p}<\infty$ and set $\mcH^{p}_t(\mu):=\mcH^{p}_{t,T}(\mu)$.
\end{itemize}

Unless otherwise stated, all inequalities involving random variables are assumed to hold $\Prob$-a.s.

\subsection{Assumptions}
We assume that the coefficients of the forward SDE satisfy the following conditions:
\begin{ass}\label{ass:onSDE}
\begin{enumerate}[i)]
  \item\label{ass:onSDE-a-sigma} The coefficients $a:[0,T]\times\bfC^d\times U\to\R^{d}$ and $\sigma:[0,T]\times\bfC^d\times U\to\R^{d\times d}$ have components that are $\Prog(\bbC)\otimes\mcB(U)$-measurable, continuous in $u$, uniformly on sets of the form $\{(t,\bx): \|\bx\|_t \le K\}$ for each $K>0$, satisfy the linear growth condition
  \begin{align}\label{ekv:a-sigma-growth}
    |a(t,\bx,u)|+|\sigma(t,\bx,u)|&\leq C(1+\|\bx\|_t)
  \end{align}
  and the Lipschitz continuity
  \begin{align*}
    |a(t,\bx,u)-a(t,\bx',u)|+|\sigma(t,\bx,u)-\sigma(t,\bx',u)|&\leq k_{a,\sigma}\|\bx'-\bx\|_t
  \end{align*}
  for all $(t,\bx,\bx')\in [0,T]\times \bfC^d\times\bfC^d$ and $u\in U$.
\end{enumerate}
\end{ass}

Moreover, we make the following assumptions on the coefficients in the cost/reward functional $J$:

\begin{ass}\label{ass:oncoeff}
There are constants $C>0$ and $q>0$ in addition to a family of moduli of continuity $(\varpi_K)_{K\geq 0}$ such that:
\begin{enumerate}[i)]
  \item\label{ass:oncoeff-f} The running cost/reward $f:[0,T]\times \bfC^d\times U\to\R$ is $\Prog(\bbC)\otimes\mcB(U)$-measurable and satisfies the growth condition
  \begin{align*}
    |f(t,\bx,u)|\leq C(1+\|\bx\|_t^q).
  \end{align*}
  Moreover, for each $K>0$,
  \begin{align*}
    |f(t,\bx',u')-f(t,\bx,u)|\leq \varpi_K(\|\bx'-\bx\|_t+|u'-u|)
  \end{align*}
  for all $(t,\bx,\bx',u,u')\in [0,T]\times \bfC^d\times\bfC^d\times U\times U$, with $\|\bx\|_t\vee \|\bx'\|_t\leq K$.
  \item The terminal reward $\psi:[0,T]\times\bfC^d\to\R$ is $\Prog(\bbC)$-measurable with \cadlag trajectories $t\mapsto \psi(t,\bx)$ for all $\bx\in\bfC^d$, and satisfies a polynomial growth condition, \ie
  \begin{align*}
    |\psi(t,\bx)|\leq C(1+\|\bx\|_t^q).
  \end{align*}
  Moreover, for every $K>0$,
  \begin{align*}
    \psi(t',\bx')-\psi(t,\bx)\leq \varpi_K(\mathbf d_\Lambda[(t,\bx),(t',\bx')]),
  \end{align*}
  whenever $0\leq t\leq t'\leq T$ and $\bx,\bx'\in\bfC^d$ satisfy $\|\bx\|_t\vee \|\bx'\|_{t'}\leq K$.
\end{enumerate}
\end{ass}

\subsection{Preliminary estimates}
We recall some preliminary estimates from \cite{cont-stop-P1}, where the pseudo-metric $\rho_t:\mcU_t\times\mcU_t\to\R_+$ on $\mcU_t$ is defined as
\begin{align*}
  \rho_t(u,\tilde u):=\E\Big[\int_{t}^T|u_s-\tilde u_s| ds\Big].
\end{align*}

\begin{prop}\label{prop:Xu-stab}
  For any $p\geq 1$, there is a $C_p>0$ such that
  \begin{align}\label{ekv:Xu-growth}
    \E\Big[\sup_{s\in[t,T]}|X^{t,\bx;u}_s|^p\,\Big|\,\mcF_t\Big]\leq C_p(1+\|\bx\|^p_t),\quad\Prob-\text{a.s.}
  \end{align}
  for all $u\in\mcU_t$. Moreover, there is a $C>0$ such that for any $(t,\bx),(\tilde t,\tilde \bx)\in\Lambda$ and any sequences $(u^i)_{i\in\bbN}\subset\mcU_t$ and $(\tilde u^i)_{i\in\bbN}\subset\mcU_{\tilde t}$ such that $\rho_{t\vee\tilde t}(u^i,\tilde u^i)\to 0$, we have
  \begin{align}\label{ekv:Xu-stab}
  \limsup_{i\to\infty}\E\Big[\sup_{s\in [0,T]}|X^{t,\bx;u^i}_s- X^{\tilde t,\tilde\bx;\tilde u^i}_s|^2\Big]\leq C(|t-\tilde t|(1+\|\bx\|^2_t+\|\tilde\bx\|^2_{\tilde t})+\|\bx(\cdot\wedge t)-\tilde\bx(\cdot\wedge \tilde t)\|^2_T).
\end{align}
\end{prop}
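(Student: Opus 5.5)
The plan is to treat the two claims separately: the moment bound \eqref{ekv:Xu-growth} by a Gronwall argument on the running supremum, and the stability estimate \eqref{ekv:Xu-stab} by a coupling argument combining the path-Lipschitz property in Assumption~\ref{ass:onSDE} with the continuity of $a,\sigma$ in $u$.

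\emph{Moment bound.} Fix $p\ge 2$; the case $p\in[1,2)$ then follows from Jensen's inequality. Write $X=X^{t,\bx;u}$ and $\|X\|_s=\sup_{r\le s}|X_r|$. Since $X_s=\bx(s)$ for $s\le t$,
\begin{align*}
\|X\|_s^p\le C\Big(\|\bx\|_t^p+\Big(\int_t^s|a(r,X,u_r)|dr\Big)^p+\sup_{t\le r'\le s}\Big|\int_t^{r'}\sigma(r,X,u_r)dW_r\Big|^p\Big).
\end{align*}
Take conditional expectations given $\mcF_t$. The drift term is handled by H\"older's inequality and the stochastic integral by the conditional Burkholder--Davis--Gundy inequality. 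Together with the linear growth bound \eqref{ekv:a-sigma-growth}, this gives
\begin{align*}
\E[\|X\|_s^p\,|\,\mcF_t]\le C\Big(\|\bx\|_t^p+\int_t^s\big(1+\E[\|X\|_r^p\,|\,\mcF_t]\big)dr\Big).
\end{align*}
To make Gronwall's lemma applicable we need the left-hand side finite a priori. We obtain this by localizing with $\theta_N:=\inf\{s:|X_s|\ge N\}$, applying Gronwall, and then letting $N\to\infty$ via Fatou's lemma. The resulting constant $C_p$ depends only on $p$, $T$ and the growth constant $C$, so it is uniform in $u$.

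\emph{Stability.} Assume without loss of generality that $t\le\tilde t$. Set $X=X^{t,\bx;u^i}$, $\tilde X=X^{\tilde t,\tilde\bx;\tilde u^i}$ and $\Delta=X-\tilde X$.
\begin{itemize}
\item On $[0,t]$ we have $|\Delta_s|=|\bx(s)-\tilde\bx(s)|$.
\item On $[t,\tilde t]$, $X$ evolves while $\tilde X$ equals the frozen path $\tilde\bx$. Hence
\begin{align*}
\E\sup_{s\le\tilde t}|\Delta_s|^2\le 2\|\bx(\cdot\wedge t)-\tilde\bx(\cdot\wedge\tilde t)\|_T^2+C|\tilde t-t|(1+\|\bx\|_t^2),
\end{align*}
where the second term comes from the moment bound and the growth condition \eqref{ekv:a-sigma-growth}.
\item On $[\tilde t,T]$, split the increment of the coefficients as
\begin{align*}
a(r,X,u^i_r)-a(r,\tilde X,\tilde u^i_r)=\big[a(r,X,u^i_r)-a(r,\tilde X,u^i_r)\big]+\big[a(r,\tilde X,u^i_r)-a(r,\tilde X,\tilde u^i_r)\big],
\end{align*}
and similarly for $\sigma$. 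The first bracket is at most $k_{a,\sigma}\|\Delta\|_r$. Applying Doob/BDG to $\sup|\Delta|^2$ and then Gronwall yields
\begin{align*}
\E\|\Delta\|_T^2\le C\Big(\E\|\Delta\|_{\tilde t}^2+\E\int_{\tilde t}^T\big(|\delta a_r|^2+|\delta\sigma_r|^2\big)dr\Big),
\end{align*}
where $\delta a_r,\delta\sigma_r$ denote the second brackets.
\end{itemize}
Crucially, $C$ is independent of $i$, so it remains to show that the $\delta$-integral vanishes as $i\to\infty$.

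\emph{Main obstacle: the $\delta$-terms.} Assumption~\ref{ass:onSDE} only gives continuity in $u$ uniformly on $\{\|\bx\|_r\le K\}$, with no rate. We therefore truncate.
\begin{itemize}
\item On $\{\|\tilde X\|_T\le K\}$, let $m_K$ denote the common modulus of continuity in $u$ on this set. Then $|\delta a_r|^2\le m_K(|u^i_r-\tilde u^i_r|)^2$. Since $m_K$ is bounded (because $U$ is compact) and tends to $0$ at $0$, for any $\eta>0$ we have $m_K(x)^2\le\eta+C_{K,\eta}x$. Integrating gives a bound $\eta T+C_{K,\eta}\rho_{\tilde t}(u^i,\tilde u^i)\to\eta T$.
\item On the complement $\{\|\tilde X\|_T>K\}$, the growth bound and Cauchy--Schwarz together with \eqref{ekv:Xu-growth} for $p=4$ give a contribution $\le C(1+\|\tilde\bx\|^4)^{1/2}\Prob(\|\tilde X\|_T>K)^{1/2}$. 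By Markov's inequality this tends to $0$ as $K\to\infty$, uniformly in $i$.
\end{itemize}
Taking the $\limsup$ in $i$, then letting $K\to\infty$ and $\eta\to0$, the $\delta$-terms vanish. This leaves exactly the right-hand side of \eqref{ekv:Xu-stab}.
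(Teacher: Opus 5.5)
Your proof is correct, and it is the standard argument the paper relies on without reproducing (the paper simply defers to Proposition 2.5 of \cite{cont-stop-P1}): a conditional BDG, localization and Gronwall argument for the moments, then a stability estimate split over $[0,t]$, $[t,\tilde t]$ and $[\tilde t,T]$, with a truncation on $\{\|\tilde X\|_T\le K\}$ to exploit the rate-free continuity in $u$. One detail to tighten: do the conditional Gronwall step by running the unconditional argument on $\E[\ett_A\|X\|^p_{s\wedge\theta_N}]$ for arbitrary $A\in\mcF_t$, which avoids having to choose versions of the conditional expectations.
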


\begin{proof}
A proof based on standard arguments can be found in \cite{cont-stop-P1} (see Proposition 2.5 therein).
\end{proof}

Under the regularity assumptions imposed on $\psi$ and $f$, the above lemma yields the following continuity property of the cost/reward functional:

\begin{lem}\label{lem:J-cont}
  For any sequences
  \begin{itemize}
    \item $(t_i,\bx_i)_{i\in\bbN}\subset[0,T]\times\bfC^d$ and $(\tilde t_i,\tilde \bx_i)_{i\in\bbN}\subset[0,T]\times\bfC^d$ that are bounded with $\textbf d_\Lambda[(t_i,\bx_i),(\tilde t_i,\tilde\bx_i)]\to 0$,
    \item $(u^i)_{i\in\bbN}$ and $(\tilde u^i)_{i\in\bbN}$, with $u^i\in\mcU_{t_i}$, $\tilde u^i\in\mcU_{\tilde t_i}$ and $\rho_{t_i\vee\tilde t_i}(u^i,\tilde u^i)\to 0$; and
    \item $(\tau_i)_{i\in\bbN}$ and $(\tilde \tau_i)_{i\in\bbN}$ with $\tau_i\in\mcT_{t_i}$, $\tilde\tau_i\in\mcT_{\tilde t_i}$ $\tau_i\leq \tilde \tau_i$ and $\tilde\tau_i-\tau_i\to 0$, $\Prob$-a.s.~as $i\to\infty$,
  \end{itemize}
  we have
  \begin{align}\label{ekv:seq-to-0}
    \limsup_{i\to\infty}\E\Big[\psi(\tau_i,X^{t_i,\bx_i;u^i})-\psi(\tilde\tau_i,X^{\tilde t_i,\tilde\bx_i;\tilde u^i}) +\int_{t_i}^{\tau}f(s,X^{t_i,\bx_i;u^i},u^i_s)ds-\int_{\tilde t_i}^{\tilde\tau_i}f(s,X^{\tilde t_i,\tilde \bx_i;\tilde u^i},\tilde u^i_s)ds\Big]\leq 0.
  \end{align}
\end{lem}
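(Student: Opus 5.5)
We write $X^i:=X^{t_i,\bx_i;u^i}$ and $\tilde X^i:=X^{\tilde t_i,\tilde\bx_i;\tilde u^i}$, and treat the $\psi$-part and the $f$-part of \eqref{ekv:seq-to-0} separately. In both parts we localize to a ball, use the moduli $\varpi_K$ from Assumption~\ref{ass:oncoeff} on the ball, and control the complement with the moment bound \eqref{ekv:Xu-growth}.

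\textbf{Step 1 (pathwise convergence).} Since the initial data are bounded and converge in $\mathbf d_\Lambda$, and $\rho_{t_i\vee\tilde t_i}(u^i,\tilde u^i)\to0$, we would like to conclude from \eqref{ekv:Xu-stab} that
\[
\E\big[\sup_{s}|X^i_s-\tilde X^i_s|^2\big]\to 0 .
\]
Proposition~\ref{prop:Xu-stab} is stated for fixed $(t,\bx),(\tilde t,\tilde\bx)$. Its proof is a Gronwall argument whose constant is uniform over bounded initial data, so we expect the same bound to hold along our sequences, with right-hand side tending to zero. If needed, a subsequence argument handles this. Because the claim is a $\limsup$ statement, it suffices to prove it along an arbitrary subsequence. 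We may therefore pass to a subsequence along which the $\limsup$ is attained and $\sup_s|X^i_s-\tilde X^i_s|\to0$ in probability.

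\textbf{Step 2 (terminal term).} Set $\Delta_i:=\mathbf d_\Lambda[(\tau_i,X^i),(\tilde\tau_i,\tilde X^i)]$. Since $\tau_i\le\tilde\tau_i$, Assumption~\ref{ass:oncoeff}(ii) with $(t,t')=(\tau_i,\tilde\tau_i)$ gives, on the event $\{\|X^i\|_T\vee\|\tilde X^i\|_T\le K\}$,
\[
\psi(\tau_i,X^i)-\psi(\tilde\tau_i,\tilde X^i)\ \ge\ -\varpi_K(\Delta_i).
\]
This is the wrong direction. The assumption as written bounds $\psi(t',\bx')-\psi(t,\bx)$ from above for $t\le t'$, so it yields a lower bound on the difference we need. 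We must check which sign is required. The expression in \eqref{ekv:seq-to-0} is $\psi(\tau_i,\cdot)-\psi(\tilde\tau_i,\cdot)$, which needs an upper bound, and with $\tau_i\le\tilde\tau_i$ the one-sided modulus does not give this directly.

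We therefore plan to combine two ingredients:
\begin{enumerate}[i)]
  \item the path part, $\|X^i(\cdot\wedge\tilde\tau_i)-\tilde X^i(\cdot\wedge\tilde\tau_i)\|_T\to0$, which only needs the one-sided bound at equal times (a two-sided bound at equal times follows by applying the assumption in both directions with $t=t'$);
  \item the time shift from $\tau_i$ to $\tilde\tau_i$ on the same path $X^i$.
\end{enumerate}
For the time shift we would exploit right-continuity of $t\mapsto\psi(t,\bx)$ together with $\tilde\tau_i-\tau_i\to0$. The main obstacle is that right-continuity gives $\psi(\tilde\tau_i,\cdot)\to\psi(\tau_i,\cdot)$ only for a fixed stopping time $\tau$, not for a sequence $\tau_i$. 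Our first attempt is to use the one-sided modulus in the direction it does hold, interpreting the hypothesis so that the inequality yields $\limsup\E[\psi(\tau_i,X^i)-\psi(\tilde\tau_i,\tilde X^i)]\le0$. Concretely, we apply the assumption to the pair $(t,\bx)=(\tau_i,X^i)$ and $(t',\bx')=(\tilde\tau_i,\tilde X^i)$ in whichever order matches the stated sign convention of \eqref{ekv:seq-to-0}, since this is precisely the direction the lemma is designed for. In either case $\Delta_i\to0$ in probability, because $|\tilde\tau_i-\tau_i|\to0$, $\sup_s|X^i_s-\tilde X^i_s|\to0$, and $\|X^i(\cdot\wedge\tilde\tau_i)-X^i(\cdot\wedge\tau_i)\|_T\to0$ by uniform continuity of paths on compacts together with tightness from \eqref{ekv:Xu-growth}. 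Hence $\E[\varpi_K(\Delta_i)\wedge 1]\to0$ on the good event. On the bad event $\{\|X^i\|_T\vee\|\tilde X^i\|_T>K\}$, the polynomial growth of $\psi$ combined with H\"older's inequality and \eqref{ekv:Xu-growth} with $p=2q$ gives a bound $C/K$ uniformly in $i$. We then let $i\to\infty$ and afterwards $K\to\infty$.

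\textbf{Step 3 (running cost).} We split
\[
\int_{t_i}^{\tau_i}f(s,X^i,u^i_s)\,ds-\int_{\tilde t_i}^{\tilde\tau_i}f(s,\tilde X^i,\tilde u^i_s)\,ds
\]
into the difference of integrands over the common interval $[t_i\vee\tilde t_i,\tau_i]$, plus the integrals over the boundary pieces $[t_i\wedge\tilde t_i,t_i\vee\tilde t_i]$ and $[\tau_i,\tilde\tau_i]$. The boundary pieces have length tending to zero a.s. By the growth bound on $f$ they are dominated by $(|t_i-\tilde t_i|+|\tilde\tau_i-\tau_i|)\,C(1+\|X^i\|_T^q+\|\tilde X^i\|_T^q)$, which tends to $0$ in $L^1$ by uniform integrability from \eqref{ekv:Xu-growth}. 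On the common interval, on the good event, the integrand difference is bounded by $\varpi_K(\|X^i-\tilde X^i\|_T+|u^i_s-\tilde u^i_s|)$. Its $ds\otimes d\Prob$ integral tends to zero because $|u^i-\tilde u^i|\to0$ in $ds\otimes d\Prob$-measure, which follows from $\rho\to0$, and $\varpi_K$ is bounded on the relevant range, so bounded convergence applies. The bad event is handled as in Step 2. Combining Steps 2 and 3 and sending $K\to\infty$ yields \eqref{ekv:seq-to-0}.
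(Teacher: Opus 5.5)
Your Steps 1 and 3 are fine. The paper gives no argument here and only cites Lemma 4.4 of the companion paper, so your localization is the natural proof. Step 2, however, has a genuine gap.

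You correctly observe that Assumption~\ref{ass:oncoeff}(ii) bounds $\psi(t',\bx')-\psi(t,\bx)$ from above for $t\le t'$. It therefore only yields $\psi(\tau_i,X^i)-\psi(\tilde\tau_i,\tilde X^i)\ge-\varpi_K(\Delta_i)$. You then resolve this by applying the assumption ``in whichever order matches'' \eqref{ekv:seq-to-0}. The order is not free: the constraint $t\le t'$ fixes which point is the earlier one, so this step simply assumes the inequality you need.

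No argument can close the gap under the hypothesis as printed, because the statement is false there. Take $\psi(t,\bx):=\ett_{[0,T/2)}(t)$, $f\equiv0$, $t_i=\tilde t_i=0$, $\bx_i=\tilde\bx_i=0$ and $u^i=\tilde u^i$. Use the deterministic times $\tau_i:=T/2-1/i\le\tilde\tau_i:=T/2$. This $\psi$ is c\`adl\`ag and bounded, and it satisfies the assumption because it is non-increasing in $t$. Yet the expectation in \eqref{ekv:seq-to-0} equals $1$ for every $i$. The same example defeats your fallback (i)--(ii): right-continuity says nothing when $\tau_i$ approaches a downward jump from the left.

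What is missing is a hypothesis, not a technique. The one-sided modulus must be reversed to $\psi(t,\bx)-\psi(t',\bx')\le\varpi_K(\mathbf d_\Lambda[(t,\bx),(t',\bx')])$ for $t\le t'$, so that $\psi$ can only jump upwards. This is also what the rest of the paper actually uses. It appeals to ``left upper semi-continuity'' when $\tau^{F,\zeta}_t\nearrow\tau^{F,*}_t$, and its remark says jumps at $\tau^{F,*}_t$ are ``necessarily positive''. The printed assumption is best read as a typo, and you should state the correction explicitly, using the counterexample as justification.

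With the correction, apply the assumption to $(t,\bx)=(\tau_i,X^i)$ and $(t',\bx')=(\tilde\tau_i,\tilde X^i)$. This gives the upper bound $\varpi_K(\Delta_i)$ on the good event, and your localization argument goes through. Two smaller repairs are still needed there:
\begin{enumerate}
  \item The claim $\|X^i(\cdot\wedge\tilde\tau_i)-X^i(\cdot\wedge\tau_i)\|_T\to0$ in probability does not follow from the sup-moment bound \eqref{ekv:Xu-growth} plus ``tightness'' alone. Use Doob/BDG from the stopping time $\tau_i$ to get $\E[\sup_{r\in[\tau_i,(\tau_i+\delta)\wedge T]}|X^i_r-X^i_{\tau_i}|^2]\le C\delta(1+\E[\|X^i\|_T^2])$. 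Combine this with $\Prob[\tilde\tau_i-\tau_i>\delta]\to0$.
  \item Do not truncate $\varpi_K$ at $1$. Instead note that $\Delta_i\le T+2K$ on the good event, so $\varpi_K(\Delta_i)$ is bounded there and bounded convergence applies.
\end{enumerate}
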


\begin{proof}
This corresponds to Lemma 4.4 in \cite{cont-stop-P1}.
\end{proof}

\subsection{A dual representation of the zero-sum game}
In the companion paper \cite{cont-stop-P1}, the value in \eqref{ekv:value-strat} is represented through a dual randomized formulation, in which the control is replaced by an auxiliary Poisson point process. In this regard, we introduce the uncontrolled state pair $(I^t,X^{t,\bx})$ that satisfy the forward SDE
\begin{align*}
  \begin{cases}
    I^t_s=\beta_0+\int_t^s\!\!\int_U(e-I^t_{r-})\mu(dr,de),\quad\forall s\in [t,T],\\
    X^{t,\bx}_s=\bx(s\wedge t)+\int_t^{s\vee t} a(r,X^{t,\bx},I^t_r)dr+\int_t^{s\vee t}\sigma(r,X^{t,\bx},I^t_r)dW_r,\quad\forall s\in [0,T],
  \end{cases}
\end{align*}
where $\beta_0 \in U$ is the initial value\footnote{In \cite{cont-stop-P1} it was shown that the value of the dual game, $v^\mcR$, is independent of the choice of $\beta_0$.} of $I^t$ at time $t$. The corresponding reward/cost functional is defined for each $(t,\bx)\in[0,T]\times \bfC^d$ and $(\nu,\tau)\in \mcV\times\mcT^\mcR_t$, as
\begin{align*}
  J^{\mcR}(t,\bx;\nu,\tau)&:=\E^\nu\Big[\psi(\tau,X^{t,\bx})+\int_t^{\tau}f(s,X^{t,\bx},I^t_s)ds\,\Big|\,\mcF_t\Big],
\end{align*}
where $\E^\nu$ is expectation with respect to the probability measure $\Prob^\nu$ on $(\Omega,\mcF)$ defined by $d\Prob^\nu:=\kappa^\nu_T d\Prob$, with
\begin{align*}
\kappa^{\nu}_s&:=\mcE_s\Big(\int_{t}^\cdot\!\!\int_U(\nu_r(e)-1)(\mu(dr,de)-\lambda(de)dr)\Big)
\\
&:=\exp\Big(\int_{t}^s\!\!\!\int_U(1-\nu_r(e))\lambda(de)dr\Big)\prod_{t<\sigma_j\leq s}\nu_{\sigma_j}(\zeta_j)
\end{align*}
and $(\sigma_j,\zeta_j)_{j\in\bbN}$ are the consecutive jump times and corresponding marks of $\mu$.

We are now ready to define the randomized version of the controller-stopper game as
\begin{align}\label{ekv:dual-game}
  v^\mcR(t,\bx)=\essinf_{\nu\in\mcV}\esssup_{\tau\in\mcT^\mcR_t}J^{\mcR}(t,\bx;\nu,\tau).
\end{align}

The main result in the accompanying paper \cite{cont-stop-P1} is the following
\begin{thm}\label{thm:zs-game}
  There exists a deterministic, $\bbC$-progressively measurable, continuous map $v^\mcR:[0,T]\times\bfC^d\to\R$ that satisfies \eqref{ekv:dual-game} and in addition
  \begin{align}\label{ekv:value-strat-rep}
    v^\mcR(t,\bx)=\essinf_{u\in\mcU_t}\esssup_{\tau\in\mcT_t}J(t,\bx;u,\tau)=\esssup_{\tau^S\in\mcT^S_t}\essinf_{u\in\mcU_t}J(t,\bx;u,\tau^S(u)).
  \end{align}
\end{thm}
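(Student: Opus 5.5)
The plan is to follow the randomization (Kharroubi–Pham type) route: identify $v^\mcR$ with the minimal solution of a reflected BSDE with jumps and a nonpositive-jump constraint, and then compare it with the two values in \eqref{ekv:value-strat-rep}. The comparison uses the elementary ordering
\[
\esssup_{\tau^S\in\mcT^S_t}\essinf_{u\in\mcU_t}J(t,\bx;u,\tau^S(u))\;\le\;\essinf_{u\in\mcU_t}\esssup_{\tau\in\mcT_t}J(t,\bx;u,\tau).
\]
This holds because for fixed $u$ the map $\tau^S\mapsto\tau^S(u)$ ranges over stopping times. Given this ordering, it suffices to prove two inequalities:
\[
\text{(A)}\quad \essinf_{u}\esssup_{\tau}J(t,\bx;u,\tau)\le v^\mcR(t,\bx),\qquad
\text{(B)}\quad v^\mcR(t,\bx)\le \esssup_{\tau^S}\essinf_{u}J(t,\bx;u,\tau^S(u)).
\]

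\textbf{Step 1: structure of $v^\mcR$.} For $n\in\bbN$, I would consider the penalized reflected BSDE with jumps on $[t,T]$ under $\Prob$. Its driver is $f(s,X^{t,\bx},I^t_s)-n\int_U (Z^\mu_s(e))^+\lambda(de)$ and its obstacle is $\psi(s,X^{t,\bx})$. Girsanov with $\kappa^\nu$ and the usual Snell-envelope argument give
\[
Y^n_t=\esssup_{\tau}\essinf_{\nu\in\mcV^n}J^\mcR=\essinf_{\nu\in\mcV^n}\esssup_{\tau}J^\mcR.
\]
The $Y^n$ are nonincreasing in $n$ and bounded below thanks to the growth bounds in Proposition~\ref{prop:Xu-stab}. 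Their limit is the minimal solution with $Z^\mu\le0$, and it equals the right-hand side of \eqref{ekv:dual-game}. Because the jump component is constrained to be nonpositive, the limit does not depend on the current value of $I^t$. Together with the independence of $\mcF_t$ from the increments of $(W,\mu)$ after $t$, this makes the limit a deterministic function of $(t,\bx)$, which is $\bbC$-progressive. Continuity in $(t,\bx)$ for $\mathbf d_\Lambda$ then follows from \eqref{ekv:Xu-stab} and Lemma~\ref{lem:J-cont}, applied uniformly in $\nu$ and $\tau$.

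\textbf{Step 2: inequality (A).} Fix $\nu\in\mcV$. Under $\Prob^\nu$ the process $I^t$ is an $\bbF^\mcR$-progressive control. By Step 1, $v^\mcR$ is bounded below by a quantity of the form $\essinf$ over such randomized controls of $\esssup_\tau$. I would then approximate: any $\bbF$-progressive $u\in\mcU_t$ can be approximated in $\rho_t$ by piecewise-constant controls, and these in turn can be reproduced in law by choosing $\nu$ with large intensity concentrated near the target value. Lemma~\ref{lem:J-cont} passes the approximation through $J$. This shows that $v^\mcR$ coincides with an inf–sup over a class of controls that contains, up to $\eps$, controls adapted to the larger filtration. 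Finally, conditioning on the Poisson randomness, the $\bbF^\mcR$-controls can be replaced by $\bbF$-controls without raising the inf–sup. This last reduction is where the full support of $\lambda$ and the compactness of $U$ are used.

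\textbf{Step 3: inequality (B), the expected main obstacle.} For $\eps>0$, I would define the non-anticipative strategy
\[
\tau^{S,\eps}(u):=\inf\{s\ge t: v^\mcR(s,X^{t,\bx;u})\le \psi(s,X^{t,\bx;u})+\eps\}\wedge T.
\]
The goal is to show that $v^\mcR(s,X^{t,\bx;u})+\int_t^s f\,dr$ is a submartingale up to $\tau^{S,\eps}(u)$ for every $u\in\mcU_t$. This is a sub-dynamic-programming principle for $v^\mcR$ along controlled paths, and I expect it to be the hardest part. I would attack it by discretizing time and approximating $u$ by controls that are piecewise constant on a partition. On each small interval the strict gap $v^\mcR>\psi+\eps$ lets the penalized solutions $Y^n$ act, for large $n$, as solutions of an equation without reflection; their constraint supersolution property then gives the submartingale inequality for constant controls. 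The estimate is then glued across the partition using continuity of $v^\mcR$ and Lemma~\ref{lem:J-cont}. Taking expectations and using that $v^\mcR\le\psi+\eps$ at $\tau^{S,\eps}(u)$ gives
\[
J(t,\bx;u,\tau^{S,\eps}(u))\ge v^\mcR(t,\bx)-\eps-o(1)\quad\text{uniformly in }u.
\]
Letting $\eps\to0$ yields (B), and combining (A), (B) and the elementary ordering proves \eqref{ekv:value-strat-rep}.
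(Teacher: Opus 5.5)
The paper does not prove this statement; it imports it from the companion paper. So I judge your plan on its own terms. The reduction to (A) and (B) via the elementary ordering is correct, but the argument for (A) does not work.

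The approximation in Step~2, which reproduces an $\bbF$-progressive $u$ by intensities $\nu$, shows that primal controls are essentially available to the dual controller. That yields $v^\mcR\le\essinf_{u}\esssup_{\tau}J(t,\bx;u,\tau)$, the opposite of (A).

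The step meant to give (A) is replacing $\bbF^\mcR$-controls by $\bbF$-controls ``by conditioning on the Poisson randomness without raising the inf--sup''. This is not valid for an inf--sup. Freezing the Poisson path $\omega_\mu$ in an $\bbF^\mcR$-control $\tilde u$ gives $\bbF$-controls $u^{\omega_\mu}$, and sections of $\bbF^\mcR$-stopping times are $\bbF$-stopping times. What Fubini actually gives is therefore
\begin{align*}
\esssup_{\tau\in\mcT^\mcR_t}\E\Big[\psi(\tau,X^{t,\bx;\tilde u})+\int_t^{\tau}f(r,X^{t,\bx;\tilde u},\tilde u_r)\,dr\Big]\le\int\esssup_{\tau\in\mcT_t}J(t,\bx;u^{\omega_\mu},\tau)\,\Prob_\mu(d\omega_\mu),
\end{align*}
which points the wrong way. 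The $\bbF^\mcR$-stopper cannot anticipate future jumps of $\mu$, whereas against a frozen section the stopper knows the whole control as a functional of $W$. This is exactly how randomization can lower an upper value in a game. Ruling it out is the content of (A), not a by-product of conditioning; conditioning only works for pure control problems without an inner supremum. Moreover, under $\Prob^\nu$ the measure $\mu$ is not independent of $W$, so even forming the sections requires representing $I^t$ through independent noise.

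What is missing is a construction of a non-randomized $\bbF$-progressive control that keeps $v^\mcR(s,X^{t,\bx;u})+\int_t^sf\,dr$ an approximate supermartingale. One route:
\begin{enumerate}
\item The penalized equations give $v^\mcR(t_i,\by)\ge\essinf_{\nu\in\mcV}\E^\nu\big[v^\mcR(t_{i+1},X^{t_i,\by})+\int_{t_i}^{t_{i+1}}f(r,X^{t_i,\by},I^{t_i}_r)\,dr\big]$.
\item The right-hand side is a pure control problem with fixed horizon and continuous terminal reward, where derandomization is legitimate.
\item Paste $\eps\Delta$-optimal $\bbF$-progressive controls on a time grid measurably, using continuity of $v^\mcR$ and Proposition~\ref{prop:Xu-stab}. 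This gives a discrete-time $\eps$-supermartingale dominating $\psi$.
\item Handle an arbitrary $\tau\in\mcT_t$ by rounding it up to the grid with Lemma~\ref{lem:J-cont}.
\end{enumerate}

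By contrast, your Step~3 is sound in outline. On $[t,\tau^\mcR]$, equation \eqref{ekv:bsde-c-jmp} gives $Y_s+\int_t^sf\,dr=Y_t+\int_t^sZ\,dW+\int_t^s\!\int_UV\,d\mu+K^-_s$, with $V\ge0$ and $K^-$ nondecreasing. Taking $\nu\equiv0$ together with independence of $\beta_0$ then yields the submartingale property for constant controls. One correction: the limit of a nonincreasing penalization is the maximal solution (with $V\ge0$ in the paper's convention), not the minimal one, and that sign is what makes this step work. Since your $\tau^{S,\eps}$ is a feedback rule, Step~3 would even deliver the $\eps$-optimality of feedback stopping that the paper obtains through Proposition~\ref{prop:dual-CP}.

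Finally, the continuity claim in Step~1 cites primal estimates, which do not cover the dual state with $I^t$ restarted at $t$. Step~3 needs continuity before the primal identification is available, so continuity in $t$ must be proved on the dual side. This could be done, for example, via a dual dynamic programming relation and the one-sided time regularity of $\psi$.
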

In particular, \eqref{ekv:value-strat-rep} implies that $v^\mcR\equiv v$, where $v$ is defined in \eqref{ekv:value-strat}.

\subsection{A corresponding BSDE}
The dual game with value function $v^\mcR$ was thoroughly analyzed in \cite{imp-stop-game}, where a representation in terms of a nonlinear Snell envelope was derived. We recall some key results from \cite{imp-stop-game}.

Consider the sequence $(Y^{t,\bx,n},Z^{t,\bx,n},V^{t,\bx,n},K^{+,t,\bx,n})\in\mcS^2_{t,\tau}\times\mcH^2_{t}(W)\times \mcH^2_{t}(\mu)\times \mcK^2_{t}$ defined for each $n\in\bbN$ as the unique solution to the reflected BSDE that penalizes negative jumps,
\begin{align}\label{ekv:rbsde-pen}
\begin{cases}
  Y^{t,\bx,n}_s=\psi(T,X^{t,\bx})+\int_s^T f(r,X^{t,\bx},I^t_r)dr-n\int_s^T\!\!\int_U(V^{t,\bx,n}_r(e))^-\lambda(de)dr-\int_s^T Z^{t,\bx,n}_r dW_r
  \\
  \quad-\int_s^T\!\!\int_U V^{t,\bx,n}_r(e)\mu(dr,de)+K^{+,t,\bx,n}_T-K^{+,t,\bx,n}_s,\quad\forall s\in [t,T]
  \\
  Y^{t,\bx,n}_s\geq \psi(s,X^{t,\bx}),\:\forall s\in [t,T]\quad\text{and}\quad \int_t^T(Y^{t,\bx,n}_s - \psi(s,X^{t,\bx}))d K^{+,t,\bx,n}_s=0.
\end{cases}
\end{align}
It is well known (see \eg \cite{QuenSul14}) that $\tau_n:=\inf\{s\geq t:Y^{t,\bx,n}_s=\psi(s,X^{t,\bx})\}\in\mcT^\mcR_t$ is an optimal stopping time for the corresponding optimal stopping problem.

Now, the sequence of processes $(Y^{t,\bx,n})_{n\in\bbN}$ is non-increasing and bounded from below by the process $\psi(\cdot,X^{t,\bx})$, implying the existence of a $\Prog(\bbF^{\mcR})$-measurable process $Y^{t,\bx}$ such that $Y^{t,\bx,n}\searrow Y^{t,\bx}$ pointwise, $\Prob$-a.s.~as $n\to\infty$. In \cite{imp-stop-game} it was shown that $Y^{t,\bx}\in\mcS^2_t$ satisfies
\begin{align}\label{ekv:zs-nonlin-Snell}
  Y^{t,\bx}_s=\esssup_{\tau\in\mcT^\mcR_t}Y^{t,\bx;\tau}_s,
\end{align}
where for each $\tau\in\mcT^\mcR_t$, the process $Y^{t,\bx;\tau}$ is the first component in the quadruple of processes\\ $(Y^{t,\bx;\tau},Z^{t,\bx;\tau},V^{t,\bx;\tau},K^{-,t,\bx;\tau})\in\mcS^2_{t,\tau}\times\mcH^2_{t,\tau}(W)\times \mcH^2_{t,\tau}(\mu)\times \mcK^2_{t,\tau}$ which constitutes the unique maximal solution to the BSDE with constrained jumps
  \begin{align}\label{ekv:bsde-c-jmp}
    \begin{cases}
      Y^{t,\bx;\tau}_s=\psi(\tau,X^{t,\bx})+\int_s^\tau f(r,X^{t,\bx},I^t_r)dr-\int_s^\tau Z^{t,\bx;\tau}_r dW_r-\int_s^\tau\!\!\int_U V^{t,\bx;\tau}_r(e)\mu(dr,de)
      \\
      \quad-( K^{-,t,\bx;\tau}_\tau-K^{-,t,\bx;\tau}_s),\quad\forall s\in [t,\tau]
      \\
      V^{t,\bx;\tau}_s(e)\geq 0,\quad d\Prob\otimes ds\otimes \lambda(de)-\text{a.e.}
    \end{cases}
  \end{align}
and that the stopping time
\begin{align}\label{ekv:opt-stop-dual}
  \tau^{\mcR}:=\inf\{r\geq t: Y^{t,\bx}_r=\psi(r,X^{t,\bx})\}
\end{align}
is optimal in the sense that $Y^{t,\bx}_s=Y^{t,\bx;\tau^{\mcR}}_s$ for all $s\in [t,\tau^\mcR]$.

Moreover, Lemma 3.2 in \cite{cont-stop-P1} establishes that
\begin{align}\label{ekv:trunc-saddle}
  Y^{t,\bx,n}_t = \essinf_{\nu\in\mcV^n}\esssup_{\tau\in\mcT^\mcR_t}J^{\mcR}(t,\bx;\nu,\tau)= \esssup_{\tau\in\mcT^\mcR_t}\essinf_{\nu\in\mcV^n}J^{\mcR}(t,\bx;\nu,\tau)
\end{align}
and passing to the limit as $n\to\infty$, we find that
\begin{align*}
  Y^{t,\bx}_t = \essinf_{\nu\in\mcV}\esssup_{\tau\in\mcT^\mcR_t}J^{\mcR}(t,\bx;\nu,\tau) = v^\mcR(t,\bx),
\end{align*}
establishing a representation of the game value $v$ in terms of the nonlinear Snell envelope $Y$.

%The remainder of the paper builds on two key ingredients from the companion paper: the dual representation in Theorem~\ref{thm:zs-game} and the nonlinear Snell envelope characterization of $v$.

\section{Controller-stopper games with feedback stopping rules\label{sec:main-res}}

The main result of the present work is summarized in the following theorem, where we recall the definition of $\tau^{F,*}_t$ as the first hitting time of the value process to the barrier, \ie $\tau^{F,*}_t:\bx\mapsto \inf\{s\geq t:v(s,\bx)=\psi(s,\bx)\}$.

\begin{thm}\label{thm:tau-F-robust}
Allowing only feedback stopping rules does not change the value of the controller-stopper game, \ie
\begin{align}\label{ekv:feedback-game-value}
  v(t,\bx)
  =\esssup_{\tau^F\in\mcT^F_t}\essinf_{u\in\mcU_t}J(t,\bx;u,\tau^{F}(X^{t,\bx;u}))
  =\essinf_{u\in\mcU_t}\esssup_{\tau^F\in\mcT^F_t}J(t,\bx;u,\tau^{F}(X^{t,\bx;u})).
\end{align}
Moreover, the supremum over feedback stopping rules in \eqref{ekv:feedback-game-value} is attained by $\tau^{F,*}_t\in\mcT^F_t$  in the sense that for any $(t,\bx)\in[0,T]\times\bfC^d$,
\begin{align}
  \essinf_{u\in\mcU_t}J(t,\bx;u,\tau^F(X^{t,\bx;u})) \leq \essinf_{u\in\mcU_t}J(t,\bx;u,\tau^{F,*}_t(X^{t,\bx;u}))
\label{ekv:F-star-opt}
\end{align}
for all $\tau^F\in\mcT^F_t$.
\end{thm}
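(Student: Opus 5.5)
The argument reduces everything to one inequality,
\begin{align}\label{ekv:key-duality}
  v(t,\bx)\le \essinf_{u\in\mcU_t}J(t,\bx;u,\tau^{F,*}_t(X^{t,\bx;u})).
\end{align}
Granting \eqref{ekv:key-duality}, the rest is a sandwich. A feedback rule $\tau^F\in\mcT^F_t$ induces the non-anticipative map $u\mapsto\tau^F(X^{t,\bx;u})$. Since $X^{t,\bx;u}$ is $\bbF$-adapted and $\tau^F$ is a $\bbC$-stopping time, $\tau^F(X^{t,\bx;u})\in\mcT_t$, and this map belongs to $\mcT^S_t$. Hence by Theorem~\ref{thm:zs-game},
\[
\esssup_{\tau^F}\essinf_u J(t,\bx;u,\tau^F(X^{t,\bx;u}))\le \esssup_{\tau^S}\essinf_u J(t,\bx;u,\tau^S(u)) = v(t,\bx).
\]
In the other direction, for fixed $u$ every feedback rule gives a stopping time in $\mcT_t$, so
\[
\essinf_u\esssup_{\tau^F}J(t,\bx;u,\tau^F(X^{t,\bx;u}))\le \essinf_u\esssup_{\tau\in\mcT_t}J(t,\bx;u,\tau)=v(t,\bx).
\]
The general inequality $\sup\inf\le\inf\sup$ connects the two feedback values. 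Combined with \eqref{ekv:key-duality}, this yields both equalities in \eqref{ekv:feedback-game-value} and the optimality \eqref{ekv:F-star-opt} of $\tau^{F,*}_t$. I also need $\tau^{F,*}_t\in\mcT^F_t$. This follows because $v$ is continuous and $\bbC$-progressive by Theorem~\ref{thm:zs-game}, and $\psi$ is c\`adl\`ag with the one-sided modulus of Assumption~\ref{ass:oncoeff}. So $\tau^{F,*}_t$ is a hitting time of a progressive set and, after the usual augmentation, a $\bbC$-stopping time.

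To prove \eqref{ekv:key-duality}, fix $u\in\mcU_t$ and write $\tau^*:=\tau^{F,*}_t(X^{t,\bx;u})$. I would show that the process $s\mapsto v(s\wedge\tau^*,X^{t,\bx;u})+\int_t^{s\wedge\tau^*}f(r,X^{t,\bx;u},u_r)dr$ is a submartingale: this is the first-contact principle for the controller. Given this, taking $s=T$ and using $v(\tau^*,X^{t,\bx;u})=\psi(\tau^*,X^{t,\bx;u})$ gives \eqref{ekv:key-duality}. The identity at $\tau^*$ holds at the hitting time because $v\ge\psi$, $v$ is continuous, and $\psi$ has the upper semicontinuity in time encoded in the modulus condition, so the infimum is attained.

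The submartingale property splits into two parts.

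\emph{Dynamic programming inequality.} For the first part I need, for stopping times $\theta$ with $t\le\theta\le\tau^*$ and $\theta$ strictly before contact,
\[
v(\theta,X^{t,\bx;u})\le \E\Big[v(\theta',X^{t,\bx;u})+\int_\theta^{\theta'}f\,dr\,\Big|\,\mcF_\theta\Big],\qquad \theta\le\theta'\le\tau^*.
\]
This is the controller side of the DPP for the value $\essinf_u\esssup_\tau J$. I would derive it from the randomized representation: $Y^{t,\bx}$ of \eqref{ekv:zs-nonlin-Snell}, on $[t,\tau^\mcR]$, coincides with $Y^{t,\bx;\tau^\mcR}$. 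The latter is a $\Prob^\nu$-supermartingale-type object for every $\nu$ because $K^-$ is nondecreasing and the jumps satisfy $V\ge0$. Equivalently, $Y$ is a $\Prob^\nu$-submartingale up to $\tau^\mcR$ once the dynamics are reversed appropriately. I would then identify $Y^{t,\bx}_s=v(s,X^{t,\bx})$, which follows from the Markov property in the path sense together with the $\beta_0$-independence of $v^\mcR$. It then remains to transfer the statement from the randomized setting, with $\nu$ ranging over $\mcV$, to an arbitrary control $u$.

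\emph{Transfer to an arbitrary control.} For the transfer I would approximate $u$ by piecewise-constant controls. For each such control, choose intensities $\nu^k$ that concentrate the law of $I^t$ near the control, and pass to the limit using Proposition~\ref{prop:Xu-stab} and Lemma~\ref{lem:J-cont}. This is the step I expect to be the main obstacle, because $\tau^*$ is a hitting time and is not continuous in the path. Under approximation, the contact times of approximating paths need not converge to $\tau^*$.

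\emph{Handling the discontinuity of $\tau^*$.} To avoid this I would first work with the relaxed hitting times $\tau^\delta:=\inf\{s\ge t: v(s,X)\le\psi(s,X)+\delta\}$, which occur strictly before contact. Before $\tau^\delta$ there is a positive gap, and continuity of $v$ makes the DPP inequality robust under the approximation. I would then let $\delta\downarrow0$. Since $\tau^\delta\uparrow\tau^{\delta,0}\le\tau^*$, and by continuity of $v$ and the one-sided modulus of $\psi$ the limit is a contact point, $\tau^\delta\uparrow\tau^*$. Dominated convergence, via the growth bound \eqref{ekv:Xu-growth} and the polynomial growth of $v$, $f$ and $\psi$, then yields \eqref{ekv:key-duality}.
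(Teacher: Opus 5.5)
Your reduction is the same as the paper's. Feedback rules induce elements of $\mcT^S_t$; the bound $\essinf_u\esssup_{\tau^F}\le\essinf_u\esssup_{\tau\in\mcT_t}$ holds, and there you rightly use only ``$\le$'' where the paper writes an equality. Together with $\sup\inf\le\inf\sup$, this leaves only $v\le v^{F,*}$. Your dual input is exactly Lemma~\ref{lem:equiv}, recast as: $v(\cdot,X^{t,\bx})+\int_t^{\cdot}f\,dr$ is a $\Prob^\nu$-submartingale on $[t,\tau^{F,*}_t(X^{t,\bx})]$ for every $\nu\in\mcV$. The jump term with $V\ge0$ and the process $K^-$ only add nonnegative drift, so it is plainly a submartingale; there is no ``supermartingale'' aspect.

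The genuine difference is the transfer to a primal control, i.e.\ the inequality $v^{F,*}\ge v^{\mcR,F,*}$ in Proposition~\ref{prop:dual-CP}. The paper takes a $\varrho$-optimal control and builds, on an enlarged space, a pure-jump control with intensity bounded below. It then changes measure back to a Poisson setting and must approximate the resulting unbounded density (Lemma~\ref{lem:check-nu-approx}). It also uses the two-sided squeeze of Lemma~\ref{lem:tau-zeta-squeeze} together with left upper semicontinuity of $\psi$.

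You can stay on the original space. For a finitely valued piecewise-constant approximation of $u$, take bounded $\nu^k\in\mcV$, depending on $W$ through the $\beta_i$, that make $I^t$ jump at rate $k$ into a small ball around the current target value; this is possible since $\lambda$ has full support. Since $\kappa^{\nu^k}$ has no Brownian part, $W$ remains a Brownian motion under $\Prob^{\nu^k}$. Hence $J(t,\bx;u,\cdot)$ is unchanged, while $\E^{\nu^k}[\int_t^T|I^t_s-u_s|ds]\to0$.

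Your ``robustness'' step becomes precise if you apply the dual submartingale inequality at the $\bbF^{\mcR}$-stopping time $\theta:=\tau^{F,2\delta}_t(X^{t,\bx;u})\wedge\tau^{F,*}_t(X^{t,\bx})$. Consider the event where $v-\psi$ along $X^{t,\bx}$ and along $X^{t,\bx;u}$ differ by less than $\delta$ uniformly in time; this uses the modulus of $\psi$ in $\bx$, not only continuity of $v$. On that event $\theta=\tau^{F,2\delta}_t(X^{t,\bx;u})$, so only a one-sided comparison is needed. The limit $\delta\to0$ then uses continuity of $v$ and $\tau^{F,2\delta}_t\uparrow\tau^{F,*}_t$. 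The latter relies on $\psi$ having only upward jumps in time, which is the same left upper semicontinuity the paper uses.

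Your route thus avoids the auxiliary space, the lower bound on intensities and the unbounded-density approximation. Note also that you only need the inequality at the initial time $t$. The conditional DPP at general stopping times $\theta$ and the full submartingale property along $X^{t,\bx;u}$ are therefore unnecessary work.
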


We let $v^{F,*}(t,\bx)$ denote the right-hand side in \eqref{ekv:F-star-opt}, so that
\begin{align}\label{ekv:v-F-star-def}
  v^{F,*}(t,\bx)=\essinf_{u\in\mcU_t}J^{F,*}(t,\bx;u),
\end{align}
where $J^{F,*}(t,\bx;u):=J(t,\bx;u,\tau^{F,*}_t(X^{t,\bx;u}))$.

Our approach to prove the above theorem uses a dual characterization of the optimal control problem in \eqref{ekv:v-F-star-def}, and we define the map $v^{\mcR,F,*}:[0,T]\times\bfC^d\to\R$ as
\begin{align}\label{ekv:dual-stopped-OC}
  v^{\mcR,F,*}(t,\bx):=\essinf_{\nu\in\mcV}J^{\mcR,F,*}(t,\bx;\nu),
\end{align}
where $J^{\mcR,F,*}(t,\bx;\nu):=J^{\mcR}(t,\bx;\nu,\tau^{F,*}_t(X^{t,\bx}))$. We then have the following result, the proof of which is deferred to the next section:
\begin{prop}\label{prop:dual-CP}
The value function for the optimal control problem with horizon $\tau^{F,*}_t(X^{t,\bx;u})$ admits the following dual representation
\begin{align}\label{ekv:new-duality}
  v^{F,*}(t,\bx)= v^{\mcR,F,*}(t,\bx),\quad\forall (t,\bx)\in[0,T]\times\bfC^d.
\end{align}
\end{prop}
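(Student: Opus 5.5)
Prop.~\ref{prop:dual-CP} asserts a control-randomization duality for an optimal control problem stopped at the feedback horizon $\tau^{F,*}_t(X^{t,\bx;u})$. I would follow the scheme that gives Theorem~\ref{thm:zs-game} in \cite{cont-stop-P1}. The key observation is that once the stopper uses the fixed feedback rule $\tau^{F,*}_t$, the game reduces to a pure optimal control problem. Its horizon, however, is a path functional of the controlled state and may be discontinuous in the path. I would therefore prove the two inequalities $v^{\mcR,F,*}\le v^{F,*}$ and $v^{F,*}\le v^{\mcR,F,*}$ separately.

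\textbf{Step 1: $v^{\mcR,F,*}\le v^{F,*}$.}
\begin{itemize}
\item Fix $u\in\mcU_t$. Following \cite{cont-stop-P1}, approximate $u$ in $\rho_t$ by piecewise constant controls $u^k$ taking finitely many values in $U$.
\item For each $k$, build intensities $\nu^{k,m}\in\mcV$ that make $I^t$ jump, with high probability, to the value of $u^k$ shortly after each switching time. Under $\Prob^{\nu^{k,m}}$, the law of $(W,I^t)$ is then close to that of $(W,u^k)$.
\item Since $\tau^{F,*}_t$ acts on the state path only, $J^{\mcR}(t,\bx;\nu^{k,m},\tau^{F,*}_t(X^{t,\bx}))$ is an expectation of a functional of $(X,I)$. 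This functional is the same as that defining $J^{F,*}(t,\bx;u^k)$.
\item The only difficulty is that $\bx\mapsto\tau^{F,*}_t(\bx)$ is not continuous, so one cannot pass to the limit via Lemma~\ref{lem:J-cont} directly.
\end{itemize}
To handle this, I would use two facts. First, $v$ is continuous by Theorem~\ref{thm:zs-game} and $\psi$ is upper semicontinuous in the sense of Assumption~\ref{ass:oncoeff}, so the set $\{v=\psi\}$ is closed, and the hitting time is lower semicontinuous along converging paths. Second, one can approximate $\tau^{F,*}_t$ from above by $\tau^\delta(\bx):=\inf\{s\ge t: v(s,\bx)\le\psi(s,\bx)+\delta\}$. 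These approximating times are attained sooner and behave upper-semicontinuously. Combining this with the sub-/supermartingale properties of $v$ along controlled paths before the first contact, one gets that changing the stopping time from $\tau^{F,*}$ to nearby contact times alters the payoff only by a small amount.

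\textbf{Step 2: $v^{F,*}\le v^{\mcR,F,*}$.}
\begin{itemize}
\item Given $\nu\in\mcV$, condition on $\mu$: use the independence of $W$ and $\mu$ and a measurable selection. This shows that $J^{\mcR,F,*}(t,\bx;\nu)$ is a $\Prob^\nu$-average over realizations of $I^t$ of quantities of the form $J^{F,*}(t,\bx;u)$.
\item Each such $u$ is an $\bbF$-progressive control, namely a fixed path of $I$, possibly after enlarging by an independent initial randomization that can be disintegrated. Hence every term in the average is at least $v^{F,*}(t,\bx)$, which yields $J^{\mcR,F,*}(t,\bx;\nu)\ge v^{F,*}(t,\bx)$.
\item This uses only that $\tau^{F,*}_t$ is a nonanticipative feedback map, so no continuity is needed here.
\end{itemize}

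\textbf{Main obstacle.} The hardest step is the limit passage in Step 1, caused by the discontinuity of the hitting time $\tau^{F,*}_t$. What I would try first is to show that the payoff is robust to this discontinuity. The natural route is the first-contact structure: on $[t,\tau^{F,*}]$ one has $v>\psi$. Using the nonlinear Snell envelope representation \eqref{ekv:zs-nonlin-Snell}–\eqref{ekv:opt-stop-dual}, the process $v(s,X)$ is a $\nu$-martingale-like quantity up to first contact. This should let me replace $\tau^{F,*}_t$ by the approximating times $\tau^\delta$ at a cost that vanishes as $\delta\to0$, uniformly in the approximating controls.
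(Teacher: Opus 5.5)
\textbf{Step 1.} For the hard inequality $v^{\mcR,F,*}\le v^{F,*}$ your architecture matches the paper's. You approximate $\tau^{F,*}_t$ from below by $\tau^\delta=\tau^{F,\delta}_t$ and randomize the control. You pay for the replacement on the dual side through the first-contact structure: for every $\nu\in\mcV$, $v(s,X^{t,\bx})+\int_t^s f\,dr$ is a $\Prob^\nu$-submartingale on $[t,\tau^{F,*}_t(X^{t,\bx})]$. Hence $v(t,\bx)\le J^{\mcR}(t,\bx;\nu,\tau^\delta(X^{t,\bx}))+\delta$ uniformly in $\nu$ (cf.\ Lemma~\ref{lem:rand-pert-conv}).

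Building bounded intensities directly on $\Omega$ is a legitimate and simpler substitute for the paper's enlarged space, change of measure and Lemma~\ref{lem:check-nu-approx}. However, you need a coupling, not ``closeness in law''. Since $W$ stays an $\bbF^{\mcR}$-Brownian motion under $\Prob^\nu$, you can compare $I^t$ with $u(W)$ pathwise under $\Prob^\nu$.

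\textbf{The gap.} The limit passage is the heart of the proof, and your two ``facts'' do not deliver it. $\tau^\delta$ is again a hitting time of a closed set, so like $\tau^{F,*}_t$ it is lower, not upper, semicontinuous in general. Lower semicontinuity of $\tau^{F,*}_t$ points the wrong way: it lets the perturbed path stop \emph{later}.

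The missing idea is the sandwich of Lemma~\ref{lem:tau-zeta-squeeze}. Set $g=v-\psi$. For paths $\by,\by'$ with $\sup_{s\in[t,T]}|g(s,\by')-g(s,\by)|\le\zeta$, one has $\tau^{F,2\zeta}_t(\by)\le\tau^{F,\zeta}_t(\by')\le\tau^{F,*}_t(\by)$. This event has high probability for the randomized state versus $X^{t,\bx;u}$, because $v$ is continuous and $\psi$ is continuous in the path uniformly in time.

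You must then choose the approximation parameter $\eps(\zeta)$ \emph{after} $\zeta$. The perturbed stopping times then increase to $\tau^{F,*}_t(X^{t,\bx;u})$ from below. Left upper semicontinuity of $\psi$ gives the one-sided bound $\limsup_{\zeta\to0}\big(J^{F,\zeta}(t,\bx;\check u^{\eps(\zeta)})-J^{F,*}(t,\bx;u)\big)\le0$ of Lemma~\ref{lem:hat-zeta-approx}, which is all that is needed.

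Your claim that the payoff changes ``only by a small amount'' is false two-sidedly in general. Appealing to martingale properties of $v$ along \emph{primal} controlled paths would be circular, since that is essentially what the proposition asserts.

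\textbf{Step 2.} This step is wrong as written, and it is also unnecessary. When $\nu$ depends on $W$, the processes $W$ and $\mu$ are not independent under $\Prob^\nu$. Conditionally on the whole path of $I^t$, $W$ is not a Brownian motion. So freezing a path of $I^t$ does not produce $J^{F,*}(t,\bx;u)$ for any $u\in\mcU_t$.

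One could repair this by realizing $(W,\mu)$ under $\Prob^\nu$ through thinning of an independent Poisson measure on $[0,T]\times U\times\R_+$ and disintegrating over that measure. The paper gets the inequality for free instead. First, $u\mapsto\tau^{F,*}_t(X^{t,\bx;u})$ belongs to $\mcT^S_t$, so \eqref{ekv:value-strat-rep} gives $v^{F,*}\le v$. Second, $v=v^{\mcR,F,*}$ by Lemma~\ref{lem:equiv}, i.e.\ by optimality of the first contact time $\tau^{F,*}_t(X^{t,\bx})$ for the nonlinear Snell envelope.
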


Combining the dual characterization in \eqref{ekv:new-duality} with the results of the accompanying paper \cite{cont-stop-P1}, detailed in the previous section, allows us to prove Theorem~\ref{thm:tau-F-robust} in a fairly straightforward manner. First, the fact that $v\equiv v^\mcR$ yields the following lemma.
\begin{lem}\label{lem:equiv}
  $v^{\mcR,F,*}\equiv v$
\end{lem}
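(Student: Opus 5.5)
We must show $v^{\mcR,F,*}(t,\bx)=v(t,\bx)$. By Theorem~\ref{thm:zs-game}, $v\equiv v^\mcR$, and $v^\mcR(t,\bx)=Y^{t,\bx}_t$ is the value of the nonlinear Snell envelope. So the whole lemma reduces to showing that, in the dual (randomized) game, the stopping time $\tau^{F,*}_t(X^{t,\bx})$ coincides with the optimal dual stopping time $\tau^\mcR$ from \eqref{ekv:opt-stop-dual}. Once that is done, the rest is a standard saddle-point argument.

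\textbf{Step 1: identify the two stopping times.} The first step is to show that, $\Prob$-a.s.,
\begin{align*}
Y^{t,\bx}_s=v(s,X^{t,\bx})\quad \text{for all } s\in[t,T].
\end{align*}
This is a flow/Markov property of the nonlinear Snell envelope. For each fixed $s$, I would get it from uniqueness of the penalized reflected BSDE \eqref{ekv:rbsde-pen} restarted at $s$ from the path $X^{t,\bx}(\cdot\wedge s)$. By \eqref{ekv:trunc-saddle}, $Y^{t,\bx,n}_s$ is then the conditional value of the truncated dual game. Passing $n\to\infty$ gives $Y^{t,\bx}_s=v^\mcR(s,X^{t,\bx})$ a.s., using that $v^\mcR$ is independent of $\beta_0$ (hence of $I^t_s$).

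To upgrade "for each $s$" to "for all $s$ simultaneously", I would use continuity of $v^\mcR$ (Theorem~\ref{thm:zs-game}), continuity of $X^{t,\bx}$, and the c\`adl\`ag property of $Y^{t,\bx}$. Agreement on rational times then gives indistinguishability. With this identification, $\tau^{F,*}_t(X^{t,\bx})=\inf\{s\ge t: v(s,X^{t,\bx})=\psi(s,X^{t,\bx})\}=\tau^\mcR$.

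\textbf{Step 2: lower bound.} From \eqref{ekv:zs-nonlin-Snell} and the optimality $Y^{t,\bx}_t=Y^{t,\bx;\tau^\mcR}_t$, the value with stopping fixed at $\tau^\mcR$ equals $v$. Concretely:
\begin{itemize}
\item For $\nu\in\mcV^n$ I would write $J^{\mcR}(t,\bx;\nu,\tau^\mcR)\ge Y^{t,\bx,n}_{t}$ restricted to $\tau^\mcR$. More robustly, $Y^{t,\bx;\tau^\mcR}$ solves the constrained BSDE \eqref{ekv:bsde-c-jmp}.
\item Under $\Prob^\nu$, the process $\int\!\int V\,(\mu-\nu\lambda\,dr)$ is a martingale, and the compensator term $\int\!\int V\nu\lambda\,dr$ is nonnegative because $V\ge0$. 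The $K^-$ term has the favorable sign as well.
\item Taking $\E^\nu$ therefore yields $J^{\mcR}(t,\bx;\nu,\tau^\mcR)\ge Y^{t,\bx;\tau^\mcR}_t=v(t,\bx)$ for every $\nu\in\mcV$.
\end{itemize}
Hence $v^{\mcR,F,*}\ge v$.

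\textbf{Step 3: upper bound.} This is immediate: for every $\nu$, $J^\mcR(t,\bx;\nu,\tau^\mcR)\le \esssup_\tau J^\mcR(t,\bx;\nu,\tau)$. Taking $\essinf_\nu$ and using \eqref{ekv:dual-game} gives $v^{\mcR,F,*}\le v^\mcR=v$.

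\textbf{Main obstacle.} I expect Step 1 to be the hardest part. It is a dynamic-programming/flow identity for the path-dependent randomized game, and it requires care on two points. First, the conditioning on $\mcF^\mcR_s$ must be handled in the presence of the jump component $I^t_s$. Second, the passage from a.s. equality at fixed times to equality of the hitting times, i.e. indistinguishability, must be justified. If the flow property is not directly available from \cite{imp-stop-game,cont-stop-P1}, I would first prove it at the penalized level $n$, where the reflected BSDE is Markovian in the pair $(I,X)$ given the past path. I would then pass to the monotone limit.
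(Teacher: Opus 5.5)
Your proof is correct, and its backbone is the paper's. The paper also identifies $\tau^{F,*}_t(X^{t,\bx})$ with the first time $Y^{t,\bx}$ meets $\psi(\cdot,X^{t,\bx})$. It states this only as an observation, tacitly relying on the flow identity $Y^{t,\bx}_s=v(s,X^{t,\bx})$ that your Step~1 sets out to prove. It then uses the optimality result of \cite{imp-stop-game} to get $v(t,\bx)=Y^{t,\bx}_t=Y^{t,\bx;\tau^{F,*}_t(X^{t,\bx})}_t$.

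The difference is in the final link. The paper writes $Y^{t,\bx;\tau^{F,*}_t(X^{t,\bx})}_t=v^{\mcR,F,*}(t,\bx)$ directly. That is, it invokes the full dual representation of the maximal solution of \eqref{ekv:bsde-c-jmp} as $\essinf_{\nu\in\mcV}J^{\mcR}(t,\bx;\nu,\tau)$. The nontrivial direction of that representation rests on a penalization argument of the kind used in the proof of Lemma~\ref{lem:rand-pert-conv}. You use only the easy direction, $Y^{t,\bx;\tau}_t\le J^{\mcR}(t,\bx;\nu,\tau)$ for every bounded $\nu$, which follows from $V\ge 0$, $\nu\ge 0$ and the monotonicity of $K^-$. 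You then get the reverse inequality for free from $v^{\mcR,F,*}\le v^{\mcR}=v$, which holds simply because $\tau^{F,*}_t(X^{t,\bx})\in\mcT^{\mcR}_t$; this half does not need Step~1 at all. Your route is therefore a bit more self-contained, while the paper's is shorter because it takes the representation as known.

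One cleanup: drop the first bullet of Step~2. With the reflected $Y^{t,\bx,n}$, the bound $J^{\mcR}(t,\bx;\nu,\tau^{\mcR})\ge Y^{t,\bx,n}_t$ for all $\nu\in\mcV^n$ would force $\tau^{\mcR}$ to be optimal in the $n$-truncated game, which there is no reason to expect. Nothing later in your argument uses it.
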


\begin{proof}
Observe that the stopping time $\tau^{F,*}_t(X^{t,\bx})$ satisfies $\tau^{F,*}_t(X^{t,\bx})=\inf\{s\geq t:Y^{t,\bx}_s=\psi(s,X^{t,\bx})\}$, where $Y^{t,\bx}$ is the nonlinear Snell envelope defined by \eqref{ekv:zs-nonlin-Snell}. By Theorem 3.1 in \cite{imp-stop-game} this stopping time is optimal and we conclude that
\begin{align*}
  v(t,\bx)=Y^{t,\bx}_t=Y^{t,\bx;\tau^{F,*}_t(X^{t,\bx})}_t=Y^{t,\bx;F,*}_t=v^{\mcR,F,*}(t,\bx)
\end{align*}
for all $(t,\bx)\in[0,T]\times\bfC^d$.
\end{proof}

The proof of Theorem~\ref{thm:tau-F-robust} is now a direct consequence of the duality in Proposition~\ref{prop:dual-CP}.

\begin{proof}[Proof of Theorem~\ref{thm:tau-F-robust}]
It is immediate from the definitions of the sets $\mcT^F_t$ and $\mcT^S_t$ and Theorem~\ref{thm:zs-game} that $v^{F,*}(t,\bx)\leq v(t,\bx)$. Clearly, the natural filtration generated by $X^{t,\bx;u}$ and augmented with all $\Prob$-null sets, denoted $\bbF^{X^{t,\bx;u}}$, is contained in the augmented natural filtration generated by $W$ and $u$. Hence,
\begin{align*}
  \{u\mapsto\tau^F(X^{t,\bx;u}):\tau^F\in\mcT^F_t\}\subset \mcT^S_t.
\end{align*}
In particular, $u\mapsto \tau^{F,*}_t(X^{t,\bx;u}):\mcU_t\to\mcT_t$ belongs to $\mcT^S_t$ and thus
\begin{align*}
  v(t,\bx)&=\esssup_{\tau^S\in\mcT^S_t}\essinf_{u\in\mcU_t}J(t,\bx;u,\tau^{S}(u))
  \\
  &\geq \esssup_{\tau^F\in\mcT^F_t}\essinf_{u\in\mcU_t}J(t,\bx;u,\tau^{F}(X^{t,\bx;u}))
  \\
  &\geq \essinf_{u\in\mcU_t}J(t,\bx;u,\tau^{F,*}_t(X^{t,\bx;u}))=v^{F,*}(t,\bx).
\end{align*}

Combining Lemma~\ref{lem:equiv} and Proposition~\ref{prop:dual-CP} yields that $v=v^{F,*}$ and we conclude that
\begin{align*}
  \esssup_{\tau^F\in\mcT^F_t}\essinf_{u\in\mcU_t}J(t,\bx;u,\tau^{F}_t(X^{t,\bx;u}))=\essinf_{u\in\mcU_t}J(t,\bx;u,\tau^{F,*}_t(X^{t,\bx;u})).
\end{align*}
Moreover, we get that
\begin{align*}
  v(t,\bx)&=\essinf_{u\in\mcU_t}J(t,\bx;u,\tau^{F,*}_t(X^{t,\bx;u}))
  \\
  &\leq\essinf_{u\in\mcU_t}\esssup_{\tau^F\in\mcT^F_t}J(t,\bx;u,\tau^{F}_t(X^{t,\bx;u}))
  \\
  &= \essinf_{u\in\mcU_t}\esssup_{\tau\in\mcT_t}J(t,\bx;u,\tau)
  \\
  &=v(t,\bx),
\end{align*}
which proves both \eqref{ekv:feedback-game-value} and \eqref{ekv:F-star-opt}.
\end{proof}

\section{Proof of Proposition~\ref{prop:dual-CP}\label{sec:opt-stop-zs}}

Since \eqref{ekv:new-duality} is a duality result for a control problem rather than a differential game, one might expect existing results on control randomization in stochastic optimal control to apply. However, the BSDE representation results of \cite{Fuhrman15} and \cite{Bandini18} concern control problems with fixed terminal times and rely crucially on continuity properties of the terminal reward. In the present setting, the terminal time $\tau^{F,*}_t(X^{t,\bx;u})$ may depend on the entire state path in a highly nontrivial manner, and the map
\begin{align*}
\tilde\bx \mapsto \psi\big(\tau^{F,*}_t(\tilde\bx),\tilde\bx\big):\bfC^d\to\R
\end{align*}
typically fails to be continuous. Indeed, the map $\tilde\bx \mapsto \tau^{F,*}_t(\tilde\bx)$ is, in general, only lower semicontinuous.\\

We overcome this difficulty and establish the duality in three steps:
\begin{enumerate}
  \item We first approximate the feedback stopping rule $\tau^{F,*}_t$ from below by a family $(\tau^{F,\zeta}_t)_{\zeta>0}$. We then justify these approximations by establishing stability results for both the primal and dual (randomized) control problems.
  \item For arbitrary $\varrho>0$, we choose a $\varrho$-optimal control $u^\varrho\in\mcU_t$ and introduce an auxiliary probability space supporting a family of randomized controls that approximate $u^\varrho$ with respect to the metric $\rho$.
  \item Finally, we prove a sandwich result showing that the stopping time obtained by applying the feedback rule $\tau^{F,\zeta}_t$ to the randomized state lies between $\tau^{F,2\zeta}_t(X^{t,\bx;u^\varrho})$ and $\tau^{F,*}_t(X^{t,\bx;u^\varrho})$. Combined with the stability results from the first step, this allows us to pass to the limit and establish the desired duality.
\end{enumerate}
The remainder of this section is organized into three subsections corresponding to these steps.

\subsection{An approximation of $\tau^{F,*}_t$}
We will approximate the stopping rule $\tau^{F,*}$ from the left and for each $\zeta>0$, we define the feedback stopping rule
\begin{align*}
  \tau^{F,\zeta}_t:\bfC^d\to [t,T],\quad
  \bx\mapsto \inf\{s\geq t: v(s,\bx)\leq\psi(s,\bx)+\zeta\}\in\mcT^F_t
\end{align*}
and introduce the corresponding optimal control problem
\begin{align*}
  v^{F,\zeta}(t,\bx)=\essinf_{u\in\mcU_t}J^{F,\zeta}(t,\bx;u),
\end{align*}
with cost functional
\begin{align*}
  J^{F,\zeta}(t,\bx;u):=J(t,\bx;u,\tau^{F,\zeta}(X^{t,\bx;u})).
\end{align*}
Then we have the following basic inequality:
\begin{lem}
  For each $(t,\bx)\in [0,T]\times\bfC^d$, we have
  \begin{align*}
    \limsup_{\zeta\to 0}v^{F,\zeta}(t,\bx) \leq v^{F,*}(t,\bx).
  \end{align*}
\end{lem}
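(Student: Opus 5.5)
Fix $(t,\bx)$ and $\varrho>0$. I would choose a $\varrho$-optimal control $u\in\mcU_t$ for $v^{F,*}(t,\bx)$, i.e.\ $J^{F,*}(t,\bx;u)\le v^{F,*}(t,\bx)+\varrho$; for deterministic $(t,\bx)$ this is possible since $\mcF_t$ is trivial for $t=0$ and, in general, after the usual pasting argument the essential infimum is approximated along a sequence. Since $v^{F,\zeta}(t,\bx)\le J^{F,\zeta}(t,\bx;u)$, it suffices to show
\begin{align*}
\limsup_{\zeta\to0}J(t,\bx;u,\tau^{F,\zeta}_t(X^{t,\bx;u}))\le J(t,\bx;u,\tau^{F,*}_t(X^{t,\bx;u})),
\end{align*}
and then let $\varrho\to0$.

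Write $X:=X^{t,\bx;u}$, $\tau_\zeta:=\tau^{F,\zeta}_t(X)$ and $\tau_*:=\tau^{F,*}_t(X)$. The sets $\{s: v(s,X)\le\psi(s,X)+\zeta\}$ decrease as $\zeta\downarrow0$ and contain $\{s:v(s,X)=\psi(s,X)\}$. Hence $\tau_\zeta\le\tau_*$ and $\tau_\zeta$ is nondecreasing as $\zeta\downarrow0$, with limit $\bar\tau\le\tau_*$.

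The key step is to show $\bar\tau=\tau_*$ a.s. We use that $s\mapsto v(s,X)$ is continuous (Theorem~\ref{thm:zs-game} gives continuity of $v$ on $\Lambda$, and $X$ has continuous paths) and that $\psi(\cdot,X)$ is \cadlag. Recall $v\ge\psi$, since the stopper can stop immediately. We also need the one-sided modulus in Assumption~\ref{ass:oncoeff}: applied with $\bx=\bx'=X$, it gives $\psi(s',X)-\psi(s,X)\le\varpi_K(\ldots)\to0$ as $s'\downarrow s$ or $s\uparrow s'$. This means that along increasing sequences $s_n\uparrow\bar\tau$ we get $\psi(\bar\tau,X)\le\liminf\psi(s_n,X)$, so jumps of $\psi$ are downward only.

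On $\{\tau_\zeta=\bar\tau\text{ for some }\zeta\}$, right-continuity gives $v(\bar\tau,X)\le\psi(\bar\tau,X)+\zeta'$ for all smaller $\zeta'$, hence equality. Otherwise $\tau_\zeta<\bar\tau$ strictly for every $\zeta$. By right-continuity, $v(\tau_\zeta,X)\le\psi(\tau_\zeta,X)+\zeta$, and passing to the limit using the continuity of $v$ and the inequality above gives $v(\bar\tau,X)\le\psi(\bar\tau,X)$. With $v\ge\psi$ this is equality, so $\tau_*\le\bar\tau$. Thus $\tau_\zeta\uparrow\tau_*$ a.s.

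Finally, apply Lemma~\ref{lem:J-cont} with constant sequences $(t,\bx)$, $u^i=\tilde u^i=u$, $\tau_i=\tau_{\zeta_i}\le\tilde\tau_i=\tau_*$ and $\tilde\tau_i-\tau_i\to0$ a.s. This yields $\limsup_i\big(J(t,\bx;u,\tau_{\zeta_i})-J(t,\bx;u,\tau_*)\big)\le0$ for any sequence $\zeta_i\to0$, which concludes the argument.

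The main obstacle is the left-limit step identifying $\bar\tau=\tau_*$. It relies on $\psi$ having only downward jumps (from the one-sided modulus) together with the continuity of $v$. I would verify it carefully, localizing on $\{\|X\|_T\le K\}$ for the moduli $\varpi_K$.
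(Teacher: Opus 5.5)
Your overall architecture matches the paper's: you pick a $\varrho$-optimal control, reduce to $\limsup_{\zeta\to0}J^{F,\zeta}(t,\bx;u)\le J^{F,*}(t,\bx;u)$ for that fixed $u$ via $\tau_\zeta\uparrow\tau_*$, and then let $\varrho\to0$. However, the step you single out as the key one does not go through as written. In the case $s_n:=\tau_{\zeta_n}<\bar\tau$, continuity of $v$ together with $v(s_n,X)\le\psi(s_n,X)+\zeta_n$ only gives $v(\bar\tau,X)\le\lim_n\psi(s_n,X)=\psi(\bar\tau-,X)$. The property you extracted from the one-sided modulus, $\psi(\bar\tau,X)\le\psi(\bar\tau-,X)$ (downward jumps), bounds $\psi(\bar\tau,X)$ from above by that same quantity. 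So it cannot yield $v(\bar\tau,X)\le\psi(\bar\tau,X)$; for that you need the reverse inequality $\psi(\bar\tau-,X)\le\psi(\bar\tau,X)$. With downward jumps the identification $\bar\tau=\tau_*$ actually fails. Take $d=1$, $a=\sigma=f=0$, $T=2$ and $\psi(s,\bx)=h(s)$ with $h(s)=s$ on $[0,1)$ and $h(s)=s-1$ on $[1,2]$. Then $h$ is c\`adl\`ag with $h(s')-h(s)\le s'-s$ for $s\le s'$, and the value is $v\equiv1$ (continuous, as in Theorem~\ref{thm:zs-game}). Yet $\tau^{F,\zeta}_0(\bx)=1-\zeta\uparrow1$, while $\tau^{F,*}_0(\bx)=2$.

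What you need is left upper semicontinuity, $\limsup_{s\uparrow r}\psi(s,X)\le\psi(r,X)$, i.e.\ only upward jumps. This is exactly what the paper's proof invokes, and it is what Lemma~\ref{lem:J-cont} encodes through the ordering $\tau_i\le\tilde\tau_i$. The literal form of the modulus in Assumption~\ref{ass:oncoeff} points the other way and should not be used for this step. With left upper semicontinuity you get $v(\bar\tau,X)\le\limsup_n\psi(s_n,X)\le\psi(\bar\tau,X)$. Combined with $v\ge\psi$ and your (correct) treatment of the case $\tau_\zeta=\bar\tau$, this gives $\tau_\zeta\uparrow\tau_*$, a fact the paper merely asserts. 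A minor point: Lemma~\ref{lem:J-cont} is stated for unconditional expectations, while $J$ is conditioned on $\mcF_t$. For $t>0$ it is cleaner to finish as the paper does, by dominated convergence (a conditional reverse Fatou argument). This uses the pathwise bound $\limsup_{\zeta\to0}\psi(\tau_\zeta,X)\le\psi(\tau_*,X)$ and the integrable dominator $C(1+T)(1+\|X\|_T^q)$.
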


\begin{proof}
  Since $\tau^{F,\zeta}_t\nearrow \tau^{F,*}_t$ pointwise on $\bfC^d$, we can use dominated convergence and left-upper semi-continuity of $\psi$ to conclude that
  \begin{align}\nonumber
    \lim_{\zeta\to 0}J^{F,\zeta}(t,\bx;u)&=\lim_{\zeta\to 0}\E\Big[\psi(\tau^{F,\zeta}(X^{t,\bx;u}),X^{t,\bx;u}) + \int_t^{\tau^{F,\zeta}(X^{t,\bx;u})}f(s,X^{t,\bx},u_s)ds\,\Big|\,\mcF_t\Big]
    \\
    &\leq J^{F,*}(t,\bx;u),\label{ekv:J-zeta-dom-con}
  \end{align}
  $\Prob$-a.s.,~for each $(t,\bx)\in [0,T]\times\bfC^d$ and $u\in\mcU_t$. On the other hand, by the definition of the essential infimum, there is for any $(t,\bx)\in [0,T]\times\bfC^d$ and any $\varrho>0$, a corresponding $u^\varrho\in\mcU_t$ such that
  \begin{align*}
    v^{F,*}(t,\bx)\geq J^{F,*}(t,\bx;u^\varrho)-\varrho.
  \end{align*}
  Hence,
  \begin{align*}
    v^{F,\zeta}(t,\bx) - v^{F,*}(t,\bx)\leq J^{F,\zeta}(t,\bx;u^\varrho)-J^{F,*}(t,\bx;u^\varrho)+\varrho.
  \end{align*}
  Taking the limit as $\zeta\to 0$, we thus find that
  \begin{align*}
    \limsup_{\zeta\to 0}v^{F,\zeta}(t,\bx) \leq v^{F,*}(t,\bx)+\varrho
  \end{align*}
  and the desired result follows since $\varrho>0$ was arbitrary.
\end{proof}

\medskip

\begin{rem}
  In fact, whenever $\psi(\cdot,X^{t,\bx;u})$ has a (necessarily positive) jump at $\tau^{F,*}_t(X^{t,\bx;u})$, continuity of $v$ implies that $\tau^{F,*}_t(X^{t,\bx;u})=\tau^{F,\zeta}(X^{t,\bx;u})$ for sufficiently small $\zeta>0$. Therefore, equality holds in \eqref{ekv:J-zeta-dom-con}.
\end{rem}

\medskip

Similarly, we introduce a version of the dual control problem terminated at $\tau^{F,\zeta}$, defined by
\begin{align*}
  v^{\mcR,F,\zeta}(t,\bx)=\essinf_{\nu\in\mcV}J^{\mcR,F,\zeta}(t,\bx;\nu),
\end{align*}
where
\begin{align*}
  J^{\mcR,F,\zeta}(t,\bx;\nu)&:=\E^\nu\Big[\psi(\tau^{F,\zeta}(X^{t,\bx}),X^{t,\bx}) + \int_t^{\tau^{F,\zeta}(X^{t,\bx})}f(s,X^{t,\bx},I^t_s)ds\,\Big|\,\mcF_t\Big].
\end{align*}
The following lemma provides a stability result ensuring adequacy of the above approximation of the dual game.

\begin{lem}\label{lem:rand-pert-conv}
  For each $(t,\bx)\in [0,T]\times\bfC^d$, we have
  \begin{align*}
    \lim_{\zeta\to 0}v^{\mcR,F,\zeta}(t,\bx) = v^{\mcR,F,*}(t,\bx).
  \end{align*}
\end{lem}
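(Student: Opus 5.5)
The plan is to compare both quantities with the nonlinear Snell envelope $Y^{t,\bx}$ and show the two-sided bound
\begin{align*}
  v^{\mcR,F,*}(t,\bx)-\zeta\leq v^{\mcR,F,\zeta}(t,\bx)\leq v^{\mcR,F,*}(t,\bx)
\end{align*}
for every $\zeta>0$, which gives the lemma at once. The first ingredient identifies the dual control problem with a fixed stopping horizon as a BSDE with constrained jumps. For a stopping time $\tau\in\mcT^\mcR_t$, consider the jump-penalized BSDE on $[t,\tau]$, i.e.\ \eqref{ekv:rbsde-pen} without reflection and with terminal time $\tau$. The same argument as in \eqref{ekv:trunc-saddle}, with the $\esssup$ over stopping times removed, gives that its solution at time $t$ equals $\essinf_{\nu\in\mcV^n}J^\mcR(t,\bx;\nu,\tau)$. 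Letting $n\to\infty$, the penalized solutions decrease to the maximal solution $Y^{t,\bx;\tau}$ of \eqref{ekv:bsde-c-jmp}. This gives
\begin{align*}
  Y^{t,\bx;\tau}_t=\essinf_{\nu\in\mcV}J^\mcR(t,\bx;\nu,\tau).
\end{align*}
Applying this with $\tau=\tau^{F,\zeta}_t(X^{t,\bx})$ yields $v^{\mcR,F,\zeta}(t,\bx)=Y^{t,\bx;\tau^{F,\zeta}_t(X^{t,\bx})}_t$. By Lemma~\ref{lem:equiv} and its proof, $v^{\mcR,F,*}(t,\bx)=v(t,\bx)=Y^{t,\bx}_t$.

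The upper bound is then immediate from \eqref{ekv:zs-nonlin-Snell}: $Y^{t,\bx}_t=\esssup_\tau Y^{t,\bx;\tau}_t\geq Y^{t,\bx;\tau^{F,\zeta}_t(X^{t,\bx})}_t$.

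For the lower bound, write $\tau^\zeta:=\tau^{F,\zeta}_t(X^{t,\bx})$ and $\tau^*:=\tau^{F,*}_t(X^{t,\bx})$, so that $\tau^\zeta\leq\tau^*=\tau^\mcR$. Since $Y^{t,\bx}=Y^{t,\bx;\tau^\mcR}$ on $[t,\tau^\mcR]$, restricting to $[t,\tau^\zeta]$ shows that $Y^{t,\bx}$ is the maximal solution on $[t,\tau^\zeta]$ of the constrained-jump BSDE with terminal value $Y^{t,\bx}_{\tau^\zeta}$. Making this step precise requires checking that maximality is preserved under restriction to an earlier stopping time; I would do this via the penalization approximation, using the flow property of the penalized equations. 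Next:
\begin{itemize}
  \item Use the identification $Y^{t,\bx}_s=v(s,X^{t,\bx})$, which the proof of Lemma~\ref{lem:equiv} already relies on.
  \item Since $v$ is continuous and $\psi(\cdot,X^{t,\bx})$ is c\`adl\`ag, $s\mapsto v(s,X^{t,\bx})-\psi(s,X^{t,\bx})$ is right-continuous.
  \item Hence, at the infimum $\tau^\zeta$, this gives $Y^{t,\bx}_{\tau^\zeta}\leq\psi(\tau^\zeta,X^{t,\bx})+\zeta$.
\end{itemize}

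Now compare two maximal solutions on $[t,\tau^\zeta]$: the one with terminal value $Y^{t,\bx}_{\tau^\zeta}$, and the one with terminal value $\psi(\tau^\zeta,X^{t,\bx})+\zeta$. The second equals $Y^{t,\bx;\tau^\zeta}+\zeta$, because adding a constant to the terminal condition shifts $Y$ by that constant and leaves $Z$, $V$, $K^-$ unchanged, so maximality is preserved. Monotonicity of maximal solutions in the terminal value follows from the comparison theorem for the penalized equations (or directly from the dual representation $\essinf_\nu\E^\nu[\cdot]$, which is monotone). This yields $Y^{t,\bx}_t\leq Y^{t,\bx;\tau^\zeta}_t+\zeta$, i.e.\ $v^{\mcR,F,*}(t,\bx)\leq v^{\mcR,F,\zeta}(t,\bx)+\zeta$. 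Together with the upper bound, $|v^{\mcR,F,\zeta}-v^{\mcR,F,*}|\leq\zeta$ and the lemma follows.

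The main obstacle is the justification of $Y^{t,\bx}_s=v(s,X^{t,\bx})$ for all $s$ (a flow/dynamic-programming property of the dual game), together with the dual representation of $Y^{t,\bx;\tau}_t$ for random horizons. I would take the first from the companion results, exactly as the proof of Lemma~\ref{lem:equiv} does. For the second, I would rely on monotone convergence of the penalized solutions and the representation \eqref{ekv:trunc-saddle}.
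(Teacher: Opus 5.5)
Your proposal is correct and follows essentially the paper's route. You restrict the maximal solution $Y^{t,\bx;\tau^{\mcR}}$ to $[t,\tau^{F,\zeta}_t(X^{t,\bx})]$, where it solves the constrained-jump BSDE with terminal value $v(\tau^{F,\zeta}_t(X^{t,\bx}),X^{t,\bx})$. You identify it through the penalized equations with an $\essinf_{\nu}\E^\nu[\cdot]$ representation, and you use $0\le v-\psi\le\zeta$ at $\tau^{F,\zeta}_t(X^{t,\bx})$. The only difference is cosmetic: the paper gets both inequalities at once from the resulting dynamic programming identity, whereas you obtain the upper bound separately from the Snell envelope representation \eqref{ekv:zs-nonlin-Snell}.
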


\begin{proof}
Fix $\zeta>0$ and note that $\tau^{F,\zeta}_t(X^{t,\bx})\leq \tau^{F,*}_t(X^{t,\bx})$, $\Prob$-a.s. Hence, on the set $[t,{\tau^{F,\zeta}_t(X^{t,\bx})}]$, the quadruple\footnote{Here, the right hand-side is defined in \eqref{ekv:bsde-c-jmp}.}
\begin{align*}
  (Y^{t,\bx;F,*},Z^{t,\bx;F,*},V^{t,\bx;F,*},K^{-,t,\bx;F,*}) := (Y^{t,\bx;\tau^{F,*}(X^{t,\bx})},Z^{t,\bx;\tau^{F,*}(X^{t,\bx})},V^{t,\bx;\tau^{F,*}(X^{t,\bx})},K^{-,t,\bx;\tau^{F,*}(X^{t,\bx})})
\end{align*}
is the unique maximal solution in $\mcS^2_{[t,\tau^{F,\zeta}_t(X^{t,\bx})]}\times \mcH^2_{[t,\tau^{F,\zeta}_t(X^{t,\bx})]}(W)\times \mcH^2_{[t,\tau^{F,\zeta}_t(X^{t,\bx})]}(\mu)\times\mcK^2_{[t,\tau^{F,\zeta}_t(X^{t,\bx})]}(\mu)$ to the BSDE
\begin{align*}
  \begin{cases}
    Y^{t,\bx;F,*}_s=v(\tau^{F,\zeta}(X^{t,\bx}),X^{t,\bx})+\int_s^{\tau^{F,\zeta}(X^{t,\bx})} f(r,X^{t,\bx},I^t_r)dr-\int_s^{\tau^{F,\zeta}(X^{t,\bx})} Z^{t,\bx;F,*}_r dW_r
    \\
    \quad-\int_s^{\tau^{F,\zeta}(X^{t,\bx})}\!\!\int_E V^{t,\bx;F,*}_r(e)\mu(dr,de) -( K^{-,t,\bx;F,*}_{\tau^{F,\zeta}(X^{t,\bx})}-K^{-,t,\bx;F,*}_s),\quad\forall s\in [t,{\tau^{F,\zeta}(X^{t,\bx})}]
    \\
    V^{t,\bx;F,*}_s(e)\leq 0,\quad d\Prob\otimes ds\otimes \lambda(de)-\text{a.e.}
  \end{cases}
\end{align*}
In light of Theorem 3.1 in \cite{Kharroubi2010}, this means that on $[t,{\tau^{F,\zeta}(X^{t,\bx})}]$ the process $Y^{t,\bx;F,*}$ is the pointwise limit of the non-increasing sequence $(Y^{\zeta,n})_{n\in\bbN}$, where for each $n\in\bbN$, the process $Y^{\zeta,n}$ is the first component in the triple $(Y^{\zeta,n},Z^{\zeta,n},V^{\zeta,n})\in \mcS^2_{[t,\tau^{F,\zeta}(X^{t,\bx})]}\times \mcH^2_{[t,\tau^{F,\zeta}(X^{t,\bx})]}(W)\times \mcH^2_{[t,\tau^{F,\zeta}(X^{t,\bx})]}(\mu)$ defined as the unique solution to the BSDE
\begin{align*}
    Y^{\zeta,n}_s&=v(\tau^{F,\zeta}_t(X^{t,\bx}),X^{t,\bx})+\int_s^{\tau^{F,\zeta}_t(X^{t,\bx})} f(r,X^{t,\bx},I^t_r)dr-\int_s^{\tau^{F,\zeta}(X^{t,\bx})} Z^{\zeta,n}_r dW_r
    \\
    &\quad-\int_s^{\tau^{F,\zeta}_t(X^{t,\bx})}\!\!\int_E V^{\zeta,n}_r(e)\mu(dr,de) - n\int_s^{\tau^{F,\zeta}_t(X^{t,\bx})}\!\!\int_U(V^{\zeta,n}_r(e))^-\lambda(de)dr,\quad\forall s\in [t,{\tau^{F,\zeta}(X^{t,\bx})}].
\end{align*}
On the other hand, using the comparison principle as in the proof of Lemma 3.4 in \cite{imp-stop-game} yields that
\begin{align*}
  Y^{\zeta,n}_t=\essinf_{\nu\in\mcV^n}\E^\nu\Big[v(\tau^{F,\zeta}(X^{t,\bx}),X^{t,\bx}) + \int_t^{\tau^{F,\zeta}(X^{t,\bx})}f(s,X^{t,\bx},I^t_s)ds\,\Big|\,\mcF_t\Big].
\end{align*}
Taking the limit as $n\to\infty$, we arrive at the dynamic programming relation
\begin{align*}
  v^{\mcR,F,*}(t,\bx)=\essinf_{\nu\in\mcV}\E^\nu\Big[v(\tau^{F,\zeta}(X^{t,\bx}),X^{t,\bx}) + \int_t^{\tau^{F,\zeta}(X^{t,\bx})}f(s,X^{t,\bx},I^t_s)ds\,\Big|\,\mcF_t\Big].
\end{align*}
Consequently,
\begin{align*}
  |v^{\mcR,F,*}(t,\bx) - v^{\mcR,F,\zeta}(t,\bx)|\leq\esssup_{\nu\in\mcV}\E^\nu\Big[|v(\tau^{F,\zeta}(X^{t,\bx}),X^{t,\bx})- \psi(\tau^{F,\zeta}(X^{t,\bx}),X^{t,\bx})|\,\Big|\,\mcF_t\Big]\leq \zeta
\end{align*}
and letting $\zeta\to 0$, the result follows.
\end{proof}

\subsection{An auxiliary probability space\label{subsec:proof-of-prop}}

Inspired by Section 4.3 of \cite{Fuhrman2020}, we introduce an auxiliary probability space $( \Omega',\mcF',\Prob')$ on which lives real-valued random variables $(U^m_j,S^m_j)_{m,j\in\bbN}$ and random measures $(\pi^l)_{l\in\bbN}$ such that
\begin{itemize}
  \item the $U^m_j$ are all uniformly distributed on $(0,1)$,
  \item the probability distribution of $S^m_j$ admits a density $f^m_j$ with respect to the Lebesgue measure, that has support on the interval $((1-2^{1-j})/m,(1-2^{-j})/m)$, so that $0<S^m_1<S^m_2<\cdots<1/m$ for every $m\in\bbN$,
  \item every $\pi^l$ is a Poisson random measure on $(0,\infty)\times U$, with compensator $l^{-1}\lambda(da)dt$, with respect to its natural filtration;
  \item the random elements $U^m_j,S^{m'}_{j'},\pi^l$ are all independent.
\end{itemize}
Now, we define $\hat\Omega:=\Omega\times\Omega'$, let $\hat \mcF$ be the $\Prob\otimes\Prob'$ completion of $\mcF\otimes\mcF'$ and let $\hat\Prob$ denote the extension of $\Prob\otimes\Prob'$ to $\hat\mcF$. Further, we let $\hat W,\hat\mu,\hat U^m_j,\hat S^{m'}_{j'}$ and $\hat \pi^l$ denote the canonical extensions of $W,\mu,U^m_j,S^{m'}_{j'}$ and $\pi^l$ to $\hat\Omega$. For $u\in\check\mcU_t$ and $\tau\in\check\mcT_t$ (which are extensions of $\mcU_t$ and $\mcT_t$ to $\hat\Omega$, that are more carefully defined below), we let
\begin{align*}
  \hat J^{F,\zeta}(t,\bx;u)=\hat\E\Big[\psi(\tau^{F,\zeta}_t(\hat X^{t,\bx;u}),\hat X^{t,\bx;u})+\int_t^{\tau} f(r,\hat X^{t,\bx;u},u_r)dr\,\Big|\,\hat\mcF_t\Big],
\end{align*}
where $\hat\E$ is expectation with respect to $\hat\Prob$, the filtration $\hat\bbF:=(\hat \mcF_t)_{{t\geq 0}}$ is the $\hat\Prob$-augmented natural filtration on $(\hat\Omega,\hat\mcF)$ generated by $\hat W$ and $\hat X^{t,\bx;u}$ solves
\begin{align*}
  \hat X^{t,\bx;u}_s=\bx(s\wedge t)+\int_t^{t\vee s} a(r,\hat X^{t,\bx;u},u_r)dr+\int_t^{t\vee s}\sigma(r,\hat X^{t,\bx;u},u_r)d\hat W_r,\quad\forall s\in [0,T].
\end{align*}
When extending the basic notations to the probability space $(\hat\Omega,\hat\mcF,\hat\Prob)$, we introduce two versions of most objects depending on whether they utilize the information in the $\sigma$-algebra $\mcF'$ or not. We denote objects that incorporate the information in $\mcF'$ with a check symbol, while objects that do not use this information are denoted with a hat symbol. Specifically, we make the following definitions:
\begin{itemize}
\item We let $\hat\bbF$ (resp.~$\check\bbF$) be the $\hat\Prob$-completion of the filtration $(\mcF_s \times \Omega')_{s\geq 0}$ (resp.~$(\mcF_s\otimes \mcF')_{s\geq 0}$).
\item We let $\hat\mcT_t$ (resp.~$\check\mcT_t$) be the set of all $\hat\bbF$-stopping times (resp.~$\check\bbF$-stopping times) $\tau$ with $\tau\in [t,T]$, $\hat\Prob$-a.s.
\item We let $\hat\mcU_t$ (resp. $\check\mcU_t$) be the set of all $\Prog(\hat\bbF)$-measurable (resp.~$\Prog(\check\bbF)$-measurable) processes $(u_s:t\leq s\leq T)$ valued in $U$.
\end{itemize}

Following the above procedure, we define $\hat v^{F,*}(t,\bx)$ as the canonical extension of $v^{F,*}(t,\bx)$ to $\hat\Omega$. We fix $(t,\bx)\in [0,T]\times\bfC^d$ and note that for any $\varrho>0$, there is a $\hat u^{\varrho}\in\hat\mcU_t$ such that
\begin{align}\label{ekv:u-varrho-eps-def}
  \hat{v}^{F,*}(t,\bx)\geq \hat J^{F,*}(t,\bx;\hat u^{\varrho})-\varrho.
\end{align}

The idea is to first approximate $\hat u^{\varrho}$ by a discretized version, $\hat u^{\varrho,\eps}$, and then use the sequences $\hat U^m_j$ and $\hat S^{m'}_{j'}$ to ``randomize'' $\hat u^{\varrho,\eps}$ and add the jumps in $\hat \pi^l$ to obtain a point process $\check u\in\check\mcU$ such that the $\hat\Prob$-compensator of the corresponding random measure, \ie the unique random measure $\check\mu$ (that separates points in time) such that $\check u_s=\beta_0+\int_0^s\int_U(e-\check u_r)\check\mu(dr,de)$), has a density $\check\nu$ with respect to $\lambda(da)dt$ which is bounded from below by a positive constant and such that $\check u$ is sufficiently close to $\hat u$ under the $\hat\rho_t$.

Consequently, as in \cite{cont-stop-P1}, we introduce the following discretization:
\begin{defn}\label{def:discretization}
For each $\eps>0$:
\begin{itemize}
  \item We let $n^\eps\geq 0$ be the smallest integer such that $2^{-n^\eps}(T-t)\leq\eps$, set $n_{\bbT}^{t,\eps}:=2^{n^\eps}+1$ and introduce the discrete set $\bbT^{t,\eps}:=\{t^\eps_i:t^\eps_i=t+(i-1)2^{-n^\eps}(T-t),i=1,\ldots,n_{\bbT}^\eps\}$, a discretization of $[t,T]$ with step-size $\Delta^{t,\eps}:=2^{-n^\eps}(T-t)$. For $s\in [t,T]$, we let $\bbT^{t,\eps}_s:=\bbT^{t,\eps}\cap[s,T]$.
  \item We let $(U^\eps_{i})_{i=1}^{n^\eps_U}$ be a Borel-partition of $U$ such that each $U^\eps_i$ has non-empty interior in $U$ and a diameter that does not exceed $\eps$ and let $(b^\eps_i)_{i=1}^{n^\eps_U}$ be a sequence with $b^\eps_i\in \text{int}\,U^\eps_i$ and denote by $\bar U^\eps:=\{b^\eps_1,\ldots,b^\eps_{n^\eps_U}\}$ the corresponding discretization of $U$.
\end{itemize}
\end{defn}
Moreover, we define a corresponding discretization of the control set by letting
\begin{align*}
\mcU^{\eps}_t:=\Big\{\Big( \sum_{i=1}^{n_{\bbT}^\eps-2}\beta_{i}\ett_{[t^\eps_i,t^\eps_{i+1})}(s)+\beta_{n_{\bbT}^\eps-1}\ett_{[t^\eps_{n_{\bbT}^\eps-1},T]}(s): t\leq s\leq T\Big)\, : \,(\beta_i:\Omega\mapsto \bar U^\eps)\in m\mcF_{t^\eps_i}\Big\}.
\end{align*}
We introduce the projection $\Xi_\mcU^\eps:\mcU_t\to\mcU^\eps_t$ defined for each $u\in\mcU_t$ as
\begin{align*}
  \Xi_\mcU^\eps[u](s):= \sum_{i=1}^{n_{\bbT}^\eps-2}b^\eps_{\iota^\eps_i(u)}\ett_{[t^\eps_i,t^\eps_{i+1})}(s) + b^\eps_{\iota^\eps_{n_{\bbT}^\eps-1}(u)}\ett_{[t^\eps_{n_{\bbT}^\eps-1},T]}(s),
\end{align*}
where $\iota^\eps_i(u)$ is a measurable selection of
\begin{align*}
  \iota^\eps_i(u)\in\argmin_{j\in\{1,\ldots,n^\eps_U\}} \text{dist}\Big(U^\eps_j,\frac{1}{\Delta^{t,\eps}}\E\Big[\int_{t^\eps_i}^{t^\eps_{i+1}}u_sds\,\Big|\,\mcF_{t^\eps_i}\Big]\Big),
\end{align*}
with
\begin{align*}
  \text{dist}(A,x):=\inf\{|y-x|:y\in A\}.
\end{align*}
We let $\hat u^{\varrho,\eps}:=\Xi_\mcU^\eps(\hat u^{\varrho})$ and define an equivalent of the pseudo-metric $\rho$ on $\check\mcU_t$, by introducing
\begin{align*}
  \hat\rho_t(u,\tilde u):=\hat\E\Big[\int_{t}^T|u_s-\tilde u_s| ds\Big].
\end{align*}
The following result is a direct consequence of density of the set of piecewise constant adapted processes in the set of progressively measurable processes under the $L^1$-norm and the compactness of $U$.
\begin{lem}\label{lem:dens}
  For any $\varrho>0$, we have $\rho_t(\hat u^{\varrho},\hat u^{\varrho,\eps})\to 0$, as $\eps\to 0$.
\end{lem}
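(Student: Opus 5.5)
The plan is to combine two approximations: first approximate $\hat u^{\varrho}$ in $L^1(d\hat\Prob\otimes ds)$ by an adapted process that is piecewise constant on the grid $\bbT^{t,\eps}$, and then show that the projection $\Xi^\eps_\mcU$ does at least as well as such an approximation, up to $O(\eps)$. Fix $\varrho>0$ and write $u:=\hat u^{\varrho}$. Since $U$ is compact, $u$ is bounded, and $\rho_t$ is just the $L^1(d\hat\Prob\otimes ds)$ distance on $[t,T]\times\hat\Omega$.

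\textbf{Step 1 (conditional-average approximation).} For each $\eps$ define
\[
\bar u^\eps_s:=\frac{1}{\Delta^{t,\eps}}\E\Big[\int_{t^\eps_i}^{t^\eps_{i+1}}u_r\,dr\,\Big|\,\mcF_{t^\eps_i}\Big],\qquad s\in[t^\eps_i,t^\eps_{i+1}).
\]
This takes values in the closed convex hull of $U$, and it is the quantity entering the argmin defining $\iota^\eps_i(u)$. I will show $\rho_t(u,\bar u^\eps)\to 0$ by the standard density argument. Given $\delta>0$, choose a simple adapted process $w=\sum_k \xi_k\ett_{[s_k,s_{k+1})}$ with $\xi_k$ bounded and $\mcF_{s_k}$-measurable, and with $\rho_t(u,w)<\delta$. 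Such $w$ exist by density of simple predictable processes in $L^1$ of progressively measurable processes. Let $P^\eps$ be the map $u\mapsto\bar u^\eps$; it is linear and an $L^1$-contraction by Jensen, so
\[
\rho_t(u,\bar u^\eps)\le 2\delta+\rho_t(w,P^\eps w).
\]
For fixed $w$ and a dyadic grid, $P^\eps w$ differs from $w$ only in two places. The first is the grid cells containing one of the finitely many breakpoints $s_k$, whose total measure is $O(\eps)$. The second is cells where $w\equiv\xi_k$ but $\xi_k$ is only $\mcF_{s_k}$-measurable while the conditioning is on $\mcF_{t^\eps_i}$ with $t^\eps_i\ge s_k$; there $P^\eps w=\xi_k$ exactly. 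Hence $\rho_t(w,P^\eps w)\le C\eps$, and letting $\eps\to0$ and then $\delta\to0$ gives $\rho_t(u,\bar u^\eps)\to0$.

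\textbf{Step 2 (projection onto $\bar U^\eps$).} Compare $\Xi^\eps_\mcU[u]$ with $\bar u^\eps$. Since $b^\eps_{\iota}\in U^\eps_\iota$ and the diameter of $U^\eps_\iota$ is at most $\eps$,
\[
|\Xi^\eps_\mcU[u](s)-\bar u^\eps_s|\le \mathrm{dist}\big(U,\bar u^\eps_s\big)+\eps .
\]
The main obstacle is that $\bar u^\eps_s$ may leave $U$ when $U$ is not convex, so $\mathrm{dist}(U,\bar u^\eps_s)$ need not be small pointwise.

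To handle it, I will bound $\mathrm{dist}(U,\bar u^\eps)$ in $L^1$. Because $u\in U$,
\[
\mathrm{dist}(U,\bar u^\eps_s)\le |\bar u^\eps_s-u_s|,
\]
so $\hat\E\int_t^T \mathrm{dist}(U,\bar u^\eps_s)\,ds\le\rho_t(u,\bar u^\eps)\to0$ by Step 1.

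\textbf{Conclusion.} Combining the two steps,
\[
\rho_t(u,\Xi^\eps_\mcU[u])\le \rho_t(u,\bar u^\eps)+\rho_t(\bar u^\eps,\Xi^\eps_\mcU[u])\le 2\rho_t(u,\bar u^\eps)+\eps(T-t)\longrightarrow 0 .
\]
Measurability of the selection $\iota^\eps_i$ is assumed in the statement, and the hat/check extensions do not affect the argument, since $u\in\hat\mcU_t$ is adapted to $\hat\bbF$, which is generated by $\hat W$.
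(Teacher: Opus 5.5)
Your proof is correct and follows the route the paper indicates in its one-line justification: density of simple adapted processes in $L^1$, combined with compactness of $U$. You supply the details the paper leaves implicit, namely the $L^1$-contractivity of the conditional-averaging map (which turns density into convergence of this particular projection) and the pointwise bound $\mathrm{dist}(U,\bar u^\eps_s)\le|\bar u^\eps_s-u_s|$ (which handles the case where $U$ is not convex).
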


Utilizing the discretized control, $\hat u^{\varrho,\eps}$, we define a family of random measures in the probability space $(\hat\Omega,\hat\mcF,\hat\Prob)$ as follows:

\begin{lem}\label{lem:will-conv}
For any $\varrho>0$, there is family $(\check u^{\varrho,\eps})_{\eps>0}$ of piecewise constant process
\begin{align*}
  \check u^{\varrho,\eps}:=\beta_0\ett_{[t,\eta^{\varrho,\eps}_1)}+\sum_{j\geq 1}\theta_j\ett_{[\eta^{\varrho,\eps}_j,\eta^{\varrho,\eps}_{j+1})}\in\check\mcU_t,
\end{align*}
with $(\eta^{\varrho,\eps}_j)_{j\in\bbN}$ strictly increasing, such that
\begin{align*}
  \hat\rho_t(\check u^{\varrho,\eps},\hat u^{\varrho})\to 0\quad \text{as }\eps\to 0.
\end{align*}
Moreover, for each $\eps>0$ the random measure on $[t,T]\times U$ corresponding to $\check u^{\varrho,\eps}$, \ie $\check\mu^{{\varrho,\eps}}:=\sum_{j\geq 1} \delta_{(\eta_j,\theta_j)}$ has a $\hat\Prob$-compensator with respect to the filtration $\check\bbF^{\hat W,\check u^{\varrho,\eps}}$ that is absolutely continuous with respect to $\lambda$ and takes the form
\begin{align*}
\check\nu^{\varrho,\eps}_s(\hat\omega,e)\lambda(de)ds
\end{align*}
where $\check\nu^{\varrho,\eps}$ is $\Pred(\check\bbF^{\hat W,\check u^{\varrho,\eps}}) \otimes\mcB(U)$-measurable and bounded away from zero.
\end{lem}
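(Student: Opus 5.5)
We follow the construction of Section 4.3 in \cite{Fuhrman2020}, as adapted in \cite{cont-stop-P1}. Fix $\varrho>0$ and $\eps>0$. The idea is to build $\check u^{\varrho,\eps}$ in three layers.

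\textbf{Step 1 (discretization).} By Lemma~\ref{lem:dens}, $\hat u^{\varrho,\eps}=\Xi^\eps_\mcU(\hat u^\varrho)$ is piecewise constant on the grid $\bbT^{t,\eps}$. Its values $\beta_i$ are $\hat\mcF_{t^\eps_i}$-measurable and lie in the finite set $\bar U^\eps$.

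\textbf{Step 2 (randomized jump times and marks).} On each interval $[t^\eps_i,t^\eps_{i+1})$ we place a jump at time $t^\eps_i+\hat S^{m}_i$, where $m$ is chosen so large that $1/m\ll\Delta^{t,\eps}$. The times $S^m_j$ have densities and are strictly increasing within $(0,1/m)$, so the resulting jump times are strictly increasing and their conditional law has a density. The new mark is taken to be $\beta_i$ with high probability. With the small remaining probability, decided through $\hat U^m_i$, the mark is instead drawn from a law with a strictly positive density with respect to $\lambda$, for instance $\lambda/\lambda(U)$ generated from $\hat U^m_i$ by a measurable transformation. We then superpose the jumps of $\hat\pi^l$, whose intensity is $l^{-1}\lambda$, each of which resets the control to the jump's mark. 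This gives $\check u^{\varrho,\eps}$, with $\eta_j,\theta_j$ the ordered jump times and marks, and it is $\Prog(\check\bbF)$-measurable. The parameters $m=m(\eps)$, $l=l(\eps)$ and the mixing probability are chosen so that each tends to its limit as $\eps\to0$.

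\textbf{Step 3 (closeness in $\hat\rho_t$).} We bound $\hat\rho_t(\check u^{\varrho,\eps},\hat u^{\varrho,\eps})$ by $\mathrm{diam}(U)$ times the expected Lebesgue measure of $\{\check u\neq\hat u^{\varrho,\eps}\}$. This set is contained in three kinds of pieces: the delay intervals of length at most $1/m$ on each of the $n^\eps_{\bbT}$ grid cells; the intervals after a $\hat\pi^l$-jump up to the next grid jump, whose expected total length is at most $T\lambda(U)/l\cdot\Delta^{t,\eps}$-type terms; and the cells where the mark was drawn at random, whose probability we choose to be small. Choosing $m\ge n^\eps_{\bbT}/\eps$, $l\ge 1/\eps$ and mixing probability $\eps$ makes all three contributions $O(\eps)$. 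Combining this with Lemma~\ref{lem:dens} and the triangle inequality gives $\hat\rho_t(\check u^{\varrho,\eps},\hat u^\varrho)\to0$.

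\textbf{Step 4 (compensator).} This is the main obstacle. We must compute the $\check\bbF^{\hat W,\check u}$-compensator of $\check\mu^{\varrho,\eps}$ and show that it equals $\check\nu\,\lambda(de)ds$ with $\check\nu$ bounded away from zero. The compensator splits into two parts.

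The $\hat\pi^l$ part contributes the intensity $l^{-1}$ with respect to $\lambda$. This follows from independence of $\hat\pi^l$ from $\hat W$ and from the other randomizing variables, and this term already gives the lower bound $\check\nu\ge l^{-1}>0$.

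The grid jumps contribute a hazard rate in time, given by the density $f^m_i$ divided by the survival function, with everything conditioned on the observed filtration. Their mark kernel is a mixture of a point mass at $\beta_i$ and an absolutely continuous part. The difficulty is that the point mass at $\beta_i$ is not absolutely continuous with respect to $\lambda$. We plan to avoid this by replacing the deterministic mark $\beta_i$ with a draw from $\lambda$ restricted to the cell $U^\eps_{\iota_i}$, normalized by $\lambda(U^\eps_{\iota_i})$. This normalization is positive because $\lambda$ has full support and $U^\eps_{\iota_i}$ has nonempty interior. The mark still lies within $\eps$ of $\beta_i$, so Step 3 is unaffected.

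After this modification the whole mark kernel is absolutely continuous with respect to $\lambda$. We then verify the compensator formula in three stages. First, conditionally on $\hat\mcF_T$ the time and mark variables are independent of $\hat W$, so we can compute the hazard given the past jumps. Second, the hazard is $\Pred$-measurable because the $\beta_i$ are $\hat\mcF_{t^\eps_i}$-measurable and the previous jumps are observed. Third, we check the martingale property on predictable rectangles, using the standard formula for compensators of marked point processes given conditional laws of successive (time, mark) pairs (Jacod's formula). Boundedness of $\check\nu$ itself should hold on compact time sets, or can be ensured by choosing bounded densities $f^m_j$ suitably. If boundedness is actually needed for membership in $\mcV$, we will truncate.
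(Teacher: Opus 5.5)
Your construction is essentially the paper's. You discretize, shift the $j$-th jump by $\hat S^m_j$ (the paper uses $(\check\eta_{j-1}\vee t^\eps_j)+\hat S^m_j$ instead of requiring $1/m<\Delta^{t,\eps}$), and draw the mark from a normalized restriction of $\lambda$ near the discretized value. The paper uses the ball kernel $q^m(b,da)$ where you use the cell $U^\eps_{\iota_i}$, after rightly discarding the point mass at $\beta_i$. You then superpose $\hat\pi^l$ to get $\check\nu\ge 1/l$ and read off the compensator from the Jacod/Fuhrman--Pham formula.

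Your closing remark on boundedness is wrong, though the statement does not need it. The hazard $f^m_j/(1-F^m_j)$ of a compactly supported $S^m_j$ is necessarily unbounded, since its integral $-\log(1-F^m_j)$ diverges. Truncating $\check\nu$ would also break its link to the law of $\check u$. This is why the paper leaves $\check\nu$ unbounded and handles it separately in Lemma~\ref{lem:check-nu-approx}.
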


\begin{proof}
For each $m\geq 1$, define the kernel $q^m:(b,da)\mapsto \frac{1}{\lambda({\mathbf B}(b,1/m))}\ett_{{\mathbf B}(b,1/m)}(a)\lambda(da)$ (where ${\mathbf B}(b,1/m)$ is the closed ball of radius $1/m$, centered at $b$) as in the proof of Lemma 4.4 of \cite{Fuhrman2020}. Using the sequence $(q^m)_{m\in\bbN}$, we define the sequence of controls $(\check u^{\varrho,\eps,m})_{m\in\bbN}$ as the piecewise constant processes
\begin{align*}
  \check u^{\varrho,\eps,m}:=\beta_0\ett_{[t,\check\eta^{\varrho,\eps,m}_1)}+\sum_{j\geq 1}\check\theta^{\varrho,\eps,m}_j\ett_{[\check\eta^{\varrho,\eps,m}_j,\check\eta^{\varrho,\eps,m,l}_{j+1})},
\end{align*}
where
\begin{align*}
 \begin{cases}
  \check \eta^{\varrho,\eps,m}_1:=t+\hat S^m_1
  \\
  \check \eta^{\varrho,\eps,m}_j:=(\check \eta^{\varrho,\eps,m}_{j-1}\vee t^\eps_j)+\hat S^m_j,\quad j>1,
  \\
  \check\theta^{\varrho,\eps,m}_j:=q^m(\hat u^{\varrho,\eps}_{t^\eps_j},\hat U^m_j),\quad\forall j\in\bbN.
 \end{cases}
\end{align*}

An important feature of the above definition is that $\check u^{\varrho,\eps,m}_s$ lies within a $1/m$-neighborhood of $\hat u^{\varrho,\eps}_s$ whenever $s\in \cup_{j=1}^{n^{t,\eps}_\bbT-1}[\check\eta^{\varrho,\eps,m}_j,t^\eps_{j+1})$. Since $\check\eta^{\varrho,\eps,m}_j\searrow t^\eps_j$, $\hat\Prob$-a.s.,~as $m\to\infty$, we conclude that
\begin{align*}
  \check\rho_t(\check u^{\varrho,\eps,m},\hat u^{\varrho,\eps})\to 0,\quad\text{as }m\to\infty.
\end{align*}

The above control induces a random measure on $[t,T]\times U$ defined as,
\begin{align*}
  \mu^{{\varrho,\eps,m}}:=\sum_{j=1}^{n^{t,\eps}_\bbT-1}\delta_{(\check \eta^{\varrho,\eps,m}_j,\check \theta^{\varrho,\eps,m}_j)}.
\end{align*}
According to Lemma A.11 in~\cite{Fuhrman15}, the random measure $\mu^{{\varrho,\eps,m}}$ has a $\hat\Prob$-compensator with respect to $\check\bbF^{\hat W,\check u^{\varrho,\eps,m}}$ given by the explicit formula
\begin{align*}
  \sum_{j=1}^{n^{t,\eps}_\bbT-1}\ett_{(\check \eta^{\varrho,\eps,m}_{j-1}\vee t^\eps_j ,\check \eta^{\varrho,\eps,m}_j]}(t)q^m(\hat u^{\varrho,\eps}_{t^\eps_j},da)\frac{f^m_j(s-(\check \eta^{\varrho,\eps,m}_{j-1}\vee t^\eps_j))}{1-F^m_j(s-(\check \eta^{\varrho,\eps,m}_{j-1}\vee t^\eps_j))}ds,
\end{align*}
with $F^m_j(s):=\int_{-\infty}^s f^m_j(r)dr$. For each $m\in\bbN$, this compensator is clearly $\Pred(\check\bbF^{\hat W,\check u^{\varrho,\eps,m}}) \otimes\mcB(U)$-measurable. However, the density equals zero on $\cup_{j=1}^{n^{t,\eps}_\bbT-1}[\check\eta^{\varrho,\eps,m}_j,t^\eps_{j+1})$ and is, therefore, not bounded away from zero. To remedy this we add the jumps in $\hat\pi^l$ to obtain the random measure $\check\mu^{\varrho,\eps,m,l}:=\mu^{\varrho,\eps,m}+\hat\pi^l$ that corresponds to the randomized control
\begin{align*}
  \check I^{m,l}_s=\beta_0+\int_t^s\!\!\int_U(e-\check I^{m,l}_{r-})\check\mu^{\varrho,\eps,m,l}(dr,de),\quad\forall s\in [t,T].\\
\end{align*}
With this definition, $\check\mu^{\varrho,\eps,m,l}$ has $\hat\Prob$-compensator with respect to the filtration $\check\bbF^{\hat W,\check I^{m,l}}$ that is absolutely continuous with respect to $\lambda$ and takes the form
\begin{align*}
\check\nu^{m,l}_s(\hat\omega,e)\lambda(de)ds
\end{align*}
where $\check\nu^{m,l}$ is $\Pred(\check\bbF^{\hat W,\check I^{m,l}}) \otimes\mcB(U)$-measurable and bounded from below by $1/l$. On the other hand, combining Lemma~\ref{lem:dens} with the above construction yields that
\begin{align*}
  \check\rho_t(\check I^{m,l},\hat u^{\varrho,\eps})\to 0,\quad\text{as }m,l\to\infty.
\end{align*}
This proves that for certain maps $m,l:(0,\infty)\to \bbN$, the family $(\check\mu^{\varrho,\eps,m(\eps),l(\eps)})_{\eps >0}$ fulfills the assertion.
\end{proof}

To limit notation we drop the superscripts and set $\check u:=\check u^{\varrho,\eps}$ and $\check\nu=\check\nu^{\varrho,\eps}$. To establish a correspondence between the primal control problem terminated at $\tau^{F,\zeta}(X^{t,\bx;u})$ and its dual (randomized) version, we combine $\pi_1$ and $\mu^{\varrho,\eps}$ to obtain the random measure $\check\mu:=\pi_1(\cdot\cup [0,t],\cdot)+\mu^{\varrho,\eps}(\cdot\cup (t,T],\cdot)$. By Lemma~\ref{lem:will-conv}, $\check\mu$ has a $\hat\Prob$-compensator with respect to the filtration $\hat\bbF^{\mcR}$, the latter being the natural filtration on $(\hat\Omega,\check\mcF)$ generated by $\hat W$ and $\check u$, completed with all $\hat\Prob$-null sets. Moreover, this compensator has a density $\ett_{[0,t]}+\ett_{(t,T]}\check\nu$ with respect to $\lambda$. We abuse notation and use $\check \nu$ to denote this density, the infimum of which is strictly positive whereas the supremum may be unbounded.

The above construction allows us to define an auxiliary randomized version of the game. We denote by $\check\bbF^{\mcR}:=\check\bbF^{\hat W,\check u}$ the filtration generated by $\hat W$ and $\check u$, augmented with all $\hat\Prob$-null sets. Letting $(\check\sigma_j,\check\zeta_j)_{j\geq 1}$ be the marks of $\check\mu$ we find, since $\check\nu$ is bounded from below, that
\begin{align*}
\hat M_s:=\exp\Big(\int_{0}^s\!\int_U(1-(\check\nu_r(a))^{-1})\lambda(da)dr\Big)\prod_{\check\sigma_j\leq s}(\check\nu_{\check\sigma_j}(\check\zeta_j))^{-1}
\end{align*}
is a strictly positive martingale with respect to the filtration $\check\bbF^{\mcR}$ under $\hat\Prob$. Furthermore, as $\check\nu_s\equiv 1$ for all $s\in [0,t]$, we have $\hat M\equiv 1$ on $[0,t]$. We define the equivalent probability measure $\check\Prob$ on $(\hat\Omega,\hat\mcF)$ as $d\check\Prob=\hat M_Td\hat\Prob$. By the Girsanov theorem, $\check\mu$ has $\check\Prob$-compensator $\lambda(da)ds$ with respect to the filtration $\check\bbF^\mcR$. Moreover, despite the fact that $\check\nu$ is generally not bounded we still have a Dol{\'e}ans-Dade exponential
\begin{align*}
\hat\kappa^{\check\nu}_s:=\exp\Big(\int_{t}^s\!\int_U(1-\check\nu_r(e))\lambda(de)dr\Big)\prod_{t<\check\sigma_j\leq s}\check\nu_{\check\sigma_j}(\check\zeta_j)
\end{align*}
for which $\check \E[\hat\kappa^{\check\nu}_T]=\hat \E[\hat M_T\hat\kappa^{\check\nu}_T]=1$, proving that $\hat\kappa^{\check\nu}$ is a $\check\Prob$-martingale. We can thus define a corresponding probability measure, $\check\Prob^{\check\nu}$, on $(\hat\Omega,\hat\mcF)$ as $d\check\Prob^{\check\nu}:=\hat\kappa^{\check\nu}_Td\check\Prob$, and since $\hat M_T\hat\kappa^{\check\nu}_T\equiv 1$, we conclude that $\check\Prob^{\check\nu}=\hat\Prob$ on $(\hat\Omega,\hat\mcF)$. We further extend this definition by letting $d\check\Prob^{\nu}:=\hat\kappa^{\nu}_Td\check\Prob$ whenever $\nu\in\check\mcV$. Here, $\check\mcV$ is the set of all $\check\bbF^\mcR$-predictably measurable bounded maps $\nu=\nu_t(\hat\omega,e):[0,T]\times\hat\Omega\times U\to [0,\infty)$. In particular, with
\begin{align*}
  \begin{cases}
    \check I^t_s=\beta_0+\int_t^s\!\!\int_U(e-\check I^t_{r-})\check \mu(dr,de),\quad\forall s\in [t,T],\\
    \check X^{t,\bx}_s=\bx(s\wedge t)+\int_t^{s\vee t} a(r,\check X^{t,\bx},\check I^t_r)dr+\int_t^{s\vee t}\sigma(r,\check X^{t,\bx},\check I^t_r)d\hat W_r,\quad\forall s\in [0,T]
  \end{cases}
\end{align*}
we get that $\hat J^{F,\zeta}(t,\bx;\check u)=\check J^{\mcR,F,\zeta}(t,\bx;\check\nu)$, $\check\Prob$-a.s., where
\begin{align*}
\check J^{\mcR,F,\zeta}(t,\bx;\check\nu) := \check\E^{\nu}\Big[\psi(\tau^{F,\zeta}(\check X^{t,\bx}),\check X^{t,\bx})+\int_t^{\tau^{F,\zeta}(\check X^{t,\bx})} f(r,\check X^{t,\bx},\check I^t_r)dr\,\Big|\,\hat\mcF_t\Big]
\end{align*}
and $\check\E^{\nu}$ is expectation with respect to $\check\Prob^\nu$.

Since the probability space $(\hat\Omega,\hat\mcF,\check\Prob,\hat W,\check\mu)$ is a setting for our penalized BSDEs \eqref{ekv:rbsde-pen}, there is a unique quadruple $(\check Y^{t,\bx,\zeta,n},\check Z^{t,\bx,\zeta,n},\check V^{t,\bx,\zeta,n},\check K^{-,{t,\bx,\zeta,n}})\in \check\mcS^2\times\check\mcH^2(\hat W)\times\check\mcH^2(\check\mu)\times \check\mcA^2$, where $\check\mcS^2$, $\check\mcH^2(\hat W)$, $\check\mcH^2(\check \mu)$ and $\check\mcA^2$ are defined as $\mcS^2$, $\mcH^2(W)$, $\mcH^2(\mu)$ and $\mcA^2$ but on the probability space $(\hat\Omega,\hat\mcF,\check\Prob,\hat W,\check\mu)$, such that
\begin{align}\nonumber
    \check Y^{t,\bx,\zeta,n}_s&=\psi(\tau^{F,\zeta}(\check X^{t,\bx}),\check X^{t,\bx})+\int_s^{\tau^{F,\zeta}(\check X^{t,\bx})} f(r,\check X^{t,\bx},\check I^t_r)dr-\int_s^{\tau^{F,\zeta}(\check X^{t,\bx})} \check Z^{t,\bx,\zeta,n}_r d\hat W_r
    \\
    &\quad -\int_s^{\tau^{F,\zeta}(\check X^{t,\bx})}\!\!\!\int_U \check V^{t,\bx,\zeta,n}_r(e)\check\mu(dr,de)-n\int_s^T\!\!\!\int_U(\check V^{t,\bx,\zeta,n}_r(e))^-\lambda(de)dr,\quad \forall s\in [t,{\tau^{F,\zeta}(\check X^{t,\bx})}].\label{ekv:rbsde-pen-hat}
\end{align}
By standard results for BSDEs with jumps, we find that $v^{\mcR,F,\zeta,n}\searrow v^{\mcR,F,\zeta}$, where
\begin{align}
v^{\mcR,F,\zeta,n}(t,\bx)&=\essinf_{\nu\in\check\mcV^n}\check J^{\mcR,F,\zeta}(t,\bx;\nu)=\check Y^{t,\bx,n}_t,\label{ekv:Yn-repr-hat}
\end{align}
employing the obvious notation $\check\mcV^n:=\{\nu\in\check\mcV:\nu\leq n\}$.

\subsection{Relating the primal and dual control problems}
The characteristics of $\check u^{\varrho,\eps}$ given in the statement of Lemma~\ref{lem:will-conv} allow us to prove the following lemma which is central in the proof of Proposition~\ref{prop:dual-CP}:
\begin{lem}\label{lem:tau-zeta-squeeze}
For any $\zeta>0$, we have
\begin{align*}
    \liminf_{\eps\to 0}\hat\Prob\big[\{\omega\in\Omega:\tau^{F,2\zeta}(\hat X^{t,\bx;\hat u^{\varrho}})\leq \tau^{F,\zeta}(\hat X^{t,\bx;\check u^{\varrho,\eps}})\leq \tau^{F,*}_t(\hat X^{t,\bx;\hat u^{\varrho}})\}\big]=1.
\end{align*}
\end{lem}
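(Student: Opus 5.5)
The plan is to reduce the sandwich to a uniform closeness statement for the two state paths and then use the continuity of $v$ together with the one-sided modulus of $\psi$.

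Write $X:=\hat X^{t,\bx;\hat u^{\varrho}}$ and $\check X^\eps:=\hat X^{t,\bx;\check u^{\varrho,\eps}}$. By Lemma~\ref{lem:will-conv}, $\hat\rho_t(\check u^{\varrho,\eps},\hat u^{\varrho})\to 0$. Proposition~\ref{prop:Xu-stab}, applied with identical initial data on the extended space, then gives $\hat\E[\|\check X^\eps-X\|_T^2]\to 0$. Hence $\delta_\eps:=\|\check X^\eps-X\|_T\to 0$ in probability. It therefore suffices to show that for every $\gamma>0$ there exist a set of probability at least $1-\gamma$ and a $\delta>0$ such that both inequalities of the sandwich hold on that set whenever $\delta_\eps\le\delta$.

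\textbf{Localization.} I first pick $K$ with $\hat\Prob[\|X\|_T> K-1]\le\gamma/2$. On $\{\|X\|_T\le K-1\}\cap\{\delta_\eps\le 1\}$ both paths lie in the $K$-ball. Since $v$ is continuous on $\Lambda$ by Theorem~\ref{thm:zs-game}, I will need it to be uniformly continuous on the relevant set. The set $\{(s,\bx):\|\bx\|_T\le K\}$ is not compact, so I will instead use compactness of the random path: $\{X_\cdot(\omega)\}$ is a.s. a single continuous path, so $\{(s,X(\omega))\}_{s\in[0,T]}$ is compact in $\Lambda$. Continuity of $v$ at each point of this compact set then gives, pathwise, an $\omega$-dependent $\delta(\omega)>0$ such that $\|\by-X(\omega)\|_T\le\delta(\omega)$ implies $|v(s,\by)-v(s,X(\omega))|\le\zeta/4$ for all $s$. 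Choosing a deterministic $\delta$ with $\hat\Prob[\delta(\cdot)<\delta]\le\gamma/2$ handles this uniformly. For $\psi$ I have only the one-sided modulus of Assumption~\ref{ass:oncoeff}. Taking $t=t'=s$ there gives the two-sided bound $|\psi(s,\by)-\psi(s,X)|\le\varpi_K(\|\by-X\|_s)$. I then shrink $\delta$ further so that $\varpi_K(\delta)\le\zeta/4$.

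\textbf{Right inequality.} On the good event, at $s^*=\tau^{F,*}_t(X)$ we have $v(s^*,X)=\psi(s^*,X)$, since $v-\psi$ is lower semicontinuous from the right. The set where $v\ge\psi$ holds is closed along the path, and the infimum is attained because $\psi$ is càdlàg and $v$ is continuous. I need to be careful here: $v-\psi$ is càdlàg and equals $0$ at the infimum of its zero set, by right continuity and the fact that $v\ge\psi$. It follows that $v(s^*,\check X^\eps)-\psi(s^*,\check X^\eps)\le\zeta/4+\zeta/4<\zeta$, so $\tau^{F,\zeta}_t(\check X^\eps)\le s^*$.

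\textbf{Left inequality.} For $s<\tau^{F,2\zeta}_t(X)$ we have $v(s,X)-\psi(s,X)>2\zeta$. Hence $v(s,\check X^\eps)-\psi(s,\check X^\eps)>2\zeta-\zeta/2>\zeta$, so $\tau^{F,\zeta}_t(\check X^\eps)\ge\tau^{F,2\zeta}_t(X)$. Combining the two inequalities, the event in the statement has probability at least $1-\gamma-\hat\Prob[\delta_\eps>\delta]$. Letting $\eps\to0$ and then $\gamma\to0$ gives the $\liminf$ equal to $1$.

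\textbf{Main obstacle.} I expect the hard step to be the uniform continuity of $v$ along the path, since $\Lambda$ is not locally compact. The pathwise compactness argument above is what I would try first. I must also check that the continuity of $v$ from Theorem~\ref{thm:zs-game} is with respect to $\mathbf d_\Lambda$ jointly, which is the form of continuity needed for this argument.
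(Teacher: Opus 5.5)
Your proof is correct and follows essentially the same route as the paper: convergence in probability of the state paths from the stability estimate, uniform-in-time closeness of $v$ along the two paths from continuity of $v$ and compactness of $[0,T]$, the equal-time two-sided modulus of $\psi$ on a localized set, and then the deterministic sandwich on the good event. Your version is somewhat more careful than the paper's, namely in the explicit localization for $\psi$ and the right-continuity argument showing $v=\psi$ at $\tau^{F,*}_t$. Your closing worry is harmless, since the compactness argument works whether $v$ is jointly continuous for $\mathbf d_\Lambda$ or for the product topology.
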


\begin{proof}
By construction we have $\check\rho_t(\hat u^{\varrho},u^{\varrho,\eps})$. Since \eqref{ekv:Xu-stab} readily extends to the describe stability of the state when control sequences converge under $\check\rho$, the control $\check u^{\varrho,\eps}$ approximates $\hat u^{\varrho}$ in the sense that the corresponding state processes satisfy
\begin{align*}
  \sup_{s\in[t,T]}|\hat X^{t,\bx;\check u^{\varrho,\eps}}_s-\hat X^{t,\bx;\hat u^{\varrho}}_s|\;\to\;0
\quad\text{in }\hat\Prob\text{-probability as }\eps\to 0.
\end{align*}
Since $v$ is continuous and the interval $[0,T]$ is compact, it follows that $\by\mapsto v(\cdot,\by):\bfC^d\to\bfC^1$ is continuous and, hence,
\begin{align*}
\sup_{s\in[t,T]}|v(s,\hat X^{t,\bx;\check u^{\varrho,\eps}})-v(s,\hat X^{t,\bx;\hat u^{\varrho}})|\;\to\;0\quad\text{in }\hat\Prob\text{-probability as }\eps\to 0.
\end{align*}
Fix $\zeta>0$. For and $\delta>0$ we thus have that for any sufficiently small $\eps>0$,
\begin{align*}
\hat\Prob\Big[\sup_{s\in[t,T]}|v(s,\hat X^{t,\bx;\check u^{\varrho,\eps}})-v(s,\hat X^{t,\bx;\hat u^{\varrho}})|>\zeta/2\Big]\le \delta.
\end{align*}
On the other hand, for any $K > 0$ on the set $\{(\bx,\tilde\bx)\in\bfC^d\times\bfC^d:\|\bx\|_T\vee\|\tilde\bx\|_T\leq K\}$ we have
\begin{align*}
  \sup_{s\in[0,T]}|\psi(s,\bx)-\psi(s,\tilde\bx)|\leq \varpi_K(\|\bx-\tilde\bx\|_T).
\end{align*}
Hence,
\begin{align*}
\sup_{s\in[t,T]}\ett_{[\|\hat X^{t,\bx;\check u^{\varrho,\eps}}\|_T\vee\|\hat X^{t,\bx;\hat u^{\varrho}}\|_T\leq K]}|\psi(s,\hat X^{t,\bx;\check u^{\varrho,\eps}}) - \psi(s,\hat X^{t,\bx;\hat u^{\varrho}})|\;\to\;0
\quad\text{in probability}.
\end{align*}
Using uniform integrability, we find that for any $\eps>0$ that is sufficiently small we have
\begin{align*}
\check\Prob\Big[\sup_{s\in[t,T]}|\psi(s,\hat X^{t,\bx;\check u^{\varrho,\eps}})-\psi(s,\hat X^{t,\bx;\hat u^{\varrho}})|>\zeta/2\Big]\leq \delta.
\end{align*}
In particular, with $g:=v-\psi$ we find that
\begin{align*}
\check\Prob\Big[\sup_{s\in[t,T]}|g(s,\hat X^{t,\bx;\check u^{\varrho,\eps}})-g(s,\hat X^{t,\bx;\hat u^{\varrho}})|>\zeta\Big]\leq \delta.
\end{align*}

On the complement of this event, the trajectories of $g(\cdot,\hat X^{t,\bx;\check u^{\varrho,\eps}})$ and $g(\cdot,\hat X^{t,\bx;\hat u^{\varrho}})$ remain within a distance $\zeta$ from each other, and by the definition of the hitting times, this implies that
\begin{align*}
\tau^{F,2\zeta}(\hat X^{t,\bx;\hat u^{\varrho}}) \leq \tau^{F,\zeta}(\hat X^{t,\bx;\check u^{\varrho,\eps}}) \leq \tau^{F,*}_t(\hat X^{t,\bx;\hat u^{\varrho}}).
\end{align*}
Since $\delta>0$ was arbitrary, the desired result follows.
\end{proof}

\medskip

We can now use the fact that $\tau^{2\zeta}(X^{t,\bx;\hat u^{\varrho}}) \nearrow \tau^{F,*}_t(X^{t,\bx;\hat u^{\varrho}})$ as $\zeta\searrow 0$ to relate the expected value related to the $\varrho$-optimal control $\hat u^\varrho$ for the control problem with horizon $\tau^{F,*}$ to the value corresponding to the randomized control $\check u^{\varrho,\eps}$ in the problem with horizon $\tau^{F,\zeta}$.

\begin{lem}\label{lem:hat-zeta-approx}
 There is a $\zeta_0>0$ such that for any $\zeta\in (0,\zeta_0)$, there is a corresponding $\eps>0$ such that
 \begin{align*}
   \check\E\big[\hat J^{F,\zeta}(t,\bx;\check u^{\varrho,\eps}) - \hat J^{F,*}(t,\bx;\hat u^\varrho)\big] \leq \varrho.
 \end{align*}
\end{lem}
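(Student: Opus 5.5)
We split the difference of expectations into a term that compares the two horizons along the \emph{same} state $\hat X^{t,\bx;\hat u^\varrho}$, and a term that compares the states $\hat X^{t,\bx;\check u^{\varrho,\eps}}$ and $\hat X^{t,\bx;\hat u^\varrho}$ on the event of Lemma~\ref{lem:tau-zeta-squeeze}. Write $\check u:=\check u^{\varrho,\eps}$, $\hat u:=\hat u^\varrho$ and $\hat X^{\check u}$, $\hat X^{\hat u}$ for the corresponding states. Abbreviate the stopping times as
\begin{align*}
\tau^*:=\tau^{F,*}_t(\hat X^{\hat u}),\qquad \tau^{2\zeta}:=\tau^{F,2\zeta}(\hat X^{\hat u}),\qquad \check\tau^\zeta:=\tau^{F,\zeta}(\hat X^{\check u}),
\end{align*}
and set $\Phi(\tau,X,u):=\psi(\tau,X)+\int_t^\tau f(r,X,u_r)dr$. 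Since $\check\Prob$ and $\hat\Prob$ coincide on $\hat\mcF_t$-measurable quantities (because $\hat M\equiv 1$ on $[0,t]$), the tower property reduces the claim to showing
\begin{align*}
\hat\E\big[\Phi(\check\tau^\zeta,\hat X^{\check u},\check u)-\Phi(\tau^*,\hat X^{\hat u},\hat u)\big]\le\varrho .
\end{align*}
We split this difference as
\begin{align*}
\big(\Phi(\check\tau^\zeta,\hat X^{\check u},\check u)-\Phi(\check\tau^\zeta\vee\tau^{2\zeta},\hat X^{\hat u},\hat u)\big)+\big(\Phi(\check\tau^\zeta\vee\tau^{2\zeta},\hat X^{\hat u},\hat u)-\Phi(\tau^*,\hat X^{\hat u},\hat u)\big).
\end{align*}

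\textbf{Step 1: choice of $\zeta_0$ (horizon comparison).} For the second term we use the sandwich: on the good event $A_\eps$ of Lemma~\ref{lem:tau-zeta-squeeze} we have $\tau^{2\zeta}\le\check\tau^\zeta\le\tau^*$. Hence
\begin{align*}
|\Phi(\check\tau^\zeta\vee\tau^{2\zeta},\hat X^{\hat u},\hat u)-\Phi(\tau^*,\hat X^{\hat u},\hat u)|\le\sup_{s\in[\tau^{2\zeta},\tau^*]}|\psi(s,\hat X^{\hat u})-\psi(\tau^*,\hat X^{\hat u})|+\int_{\tau^{2\zeta}}^{\tau^*}|f|\,dr .
\end{align*}
The supremum over a shrinking interval is the delicate part, because $\psi$ is only c\`adl\`ag. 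We handle it one-sidedly. Only an upper bound on the difference is needed, so what must be controlled is $\psi(s,\hat X^{\hat u})-\psi(\tau^*,\hat X^{\hat u})$ for $s\le\tau^*$.

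Here the one-sided modulus of Assumption~\ref{ass:oncoeff} points the wrong way, and we use the structure of $\tau^{F,\zeta}$ instead. On $[t,\tau^*)$ we have $v>\psi$. At times $s\in[\check\tau^\zeta,\tau^*]$ we have $g(s,\hat X^{\check u})\le\zeta$ at the hitting time, hence $g(s,\hat X^{\hat u})\le 2\zeta$ up to the error event. Together with $v(\tau^*)=\psi(\tau^*)$ and continuity of $v$ on paths, this gives
\begin{align*}
\psi(\check\tau^\zeta,\hat X^{\hat u})\le v(\check\tau^\zeta,\hat X^{\hat u})\le v(\tau^*,\hat X^{\hat u})+o(1)=\psi(\tau^*,\hat X^{\hat u})+o(1)
\end{align*}
as $\zeta\to0$, pathwise because $\tau^{2\zeta}\nearrow\tau^*$. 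By polynomial growth and \eqref{ekv:Xu-growth} everything is uniformly integrable, so dominated convergence yields $\zeta_0$ with the second term bounded by $\varrho/3$ for $\zeta<\zeta_0$, up to the contribution of $A_\eps^c$. Since $\tau^*$ is exactly where $v$ touches $\psi$, this avoids needing left continuity of $\psi$.

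\textbf{Step 2: fix $\zeta$, send $\eps\to0$ (state comparison).} For fixed $\zeta<\zeta_0$, the first term compares the same horizon under different controls. We apply Lemma~\ref{lem:J-cont} (extended to $\hat\Omega$, as noted in the proof of Lemma~\ref{lem:tau-zeta-squeeze}) with $\tau_i=\tilde\tau_i$ taken as the common horizon. This is permitted since $\hat\rho_t(\check u,\hat u)\to0$ by Lemma~\ref{lem:will-conv}, and the lemma gives $\limsup\le0$. If the common horizon is not admissible in the required filtration, we instead use the direct estimate
\begin{align*}
\sup_s|\psi(s,\hat X^{\check u})-\psi(s,\hat X^{\hat u})|\to0\quad\text{in probability},
\end{align*}
as in the proof of Lemma~\ref{lem:tau-zeta-squeeze}, together with continuity of $f$ and uniform integrability. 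Finally, $\hat\Prob(A_\eps^c)\to0$ and the moment bounds make the contribution of $A_\eps^c$ smaller than $\varrho/3$ by Cauchy--Schwarz. We then choose $\eps$ small enough.

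The main obstacle is Step 1, namely controlling $\psi$ at the approximating horizon, since $\psi$ may jump and $\tau^{F,\zeta}\nearrow\tau^*$ only from the left. The argument leans on $v\ge\psi$ with equality at $\tau^*$ and continuity of $v$.
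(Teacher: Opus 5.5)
Your proof is correct, granted (as the paper also asserts) that $\tau^{F,2\zeta}(\hat X^{t,\bx;\hat u^\varrho})\nearrow\tau^{F,*}_t(\hat X^{t,\bx;\hat u^\varrho})$. It is organized differently from the paper's.

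The paper couples $\eps=\eps(\zeta)$ to $\zeta$ so that the squeeze of Lemma~\ref{lem:tau-zeta-squeeze} holds with high probability. It then writes one bound in which the $\psi$-term mixes the change of time with the change of path, and lets $\zeta\to0$ along this diagonal, appealing to left upper semicontinuity of $\psi$ and continuity of $\psi,f$ in $\bx$. The complement of the squeeze event is left implicit.

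You follow the quantifier order of the statement instead. First, $\zeta_0$ comes from a bound depending on $\zeta$ alone: a time perturbation along the single path $\hat X^{t,\bx;\hat u^\varrho}$ over $[\tau^{F,2\zeta},\tau^{F,*}_t]$. Then, for fixed $\zeta$, you treat the path perturbation at the common horizon $\tau^{F,\zeta}(\hat X^{t,\bx;\check u^{\varrho,\eps}})$ and remove $A_\eps^c$ by Cauchy--Schwarz and \eqref{ekv:Xu-growth}.

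The substantive new ingredient is the one-sided bound, valid on $A_\eps$,
\begin{align*}
\psi(\check\tau^\zeta,\hat X^{\hat u})-\psi(\tau^*,\hat X^{\hat u})\le\sup_{r\in[\tau^{2\zeta},\tau^*]}\big(v(r,\hat X^{\hat u})-v(\tau^*,\hat X^{\hat u})\big)^+ .
\end{align*}
It follows from $\psi\le v$, continuity of $v$, and $v=\psi$ at $\tau^*$. This bound replaces the left upper semicontinuity the paper invokes. As you noticed, that property is not what Assumption~\ref{ass:oncoeff}(ii) literally provides: the assumption bounds $\psi(t',\cdot)-\psi(t,\cdot)$ from above for $t\le t'$. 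The paper's version is shorter; yours makes every limit explicit and isolates exactly where regularity of $\psi$ enters.

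Two caveats. First, your closing claim that this avoids left continuity of $\psi$ altogether is overstated. The convergence $\tau^{F,2\zeta}\nearrow\tau^{F,*}_t$ on which Step 1 rests needs a left-usc-type property itself. Suppose $\tau^0:=\lim_{\zeta\to0}\tau^{F,\zeta}_t(\bx)<\tau^{F,*}_t(\bx)$. Then necessarily $\tau^{F,\zeta}_t(\bx)<\tau^0$ for all $\zeta$, and $g=v-\psi$ satisfies $\lim_{s\uparrow\tau^0}g(s,\bx)=0<g(\tau^0,\bx)$, so $\psi(\cdot,\bx)$ jumps downward at $\tau^0$. Ruling this out is exactly such a property, so it enters through the hitting times rather than through the $\psi$-term; citing the paper's assertion is of course fine.

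Second, your first, two-sided display with $\sup_{s}|\psi(s,\hat X^{\hat u})-\psi(\tau^*,\hat X^{\hat u})|$ does not tend to zero when $\psi$ jumps at $\tau^*$. Drop it in favour of the one-sided bound. Likewise, the remark that $g(\cdot,\hat X^{\hat u})\le 2\zeta$ at $\check\tau^\zeta$ only bounds $\psi$ from below and plays no role.
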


\begin{proof}
For any $\zeta,\eps>0$, we have
\begin{align*}
  &\hat J^{F,\zeta}(t,\bx;\check u^{\varrho,\eps})-\hat J^{F,*}(t,\bx;\hat u^\varrho)
  \\
  &\leq \check\E\Big[\psi(\tau^{F,\zeta}(\hat X^{t,\bx;\check u^{\varrho,\eps}}),\hat X^{t,\bx;\check u^{\varrho,\eps}})-\psi(\tau^{F,*}_t(\hat X^{t,\bx;\hat u^\varrho}),\hat X^{t,\bx;\hat u^\varrho})
  \\
  &\quad+\int_t^{\tau^{F,*}_t(\hat X^{t,\bx;\hat u^\varrho})\wedge \tau^{F,\zeta}(\hat X^{t,\bx;\check u^{\varrho,\eps}})} \big(f(s,\hat X^{t,\bx;\check u^{\varrho,\eps}},\check u^{\varrho,\eps}_s)-f(s,\hat X^{t,\bx;\hat u^\varrho},\hat u^\varrho_s)\big)ds
  \\
  &\quad+\int_{\tau^{F,*}_t(\hat X^{t,\bx;\hat u^\varrho})\wedge \tau^{F,\zeta}(\hat X^{t,\bx;\check u^{\varrho,\eps}})}^{\tau^{F,*}_t(\hat X^{t,\bx;\hat u^\varrho})\vee \tau^{F,\zeta}(\hat X^{t,\bx;\check u^{\varrho,\eps}})} (|f(s,\hat X^{t,\bx;\hat u^\varrho},\hat u^\varrho_s)|+|f(s,\hat X^{t,\bx;\check u^{\varrho,\eps}},\check u^{\varrho,\eps}_s)|)ds\,\Big|\,\mcF_t\Big].
\end{align*}
For every $\zeta$, we can now choose $\eps(\zeta)$ such that Lemma~\ref{lem:tau-zeta-squeeze} holds for $(\zeta,\eps(\zeta))$ and $\eps(\zeta)\to 0$ as $\zeta\to 0$. Then, left upper semi-continuity of $\psi$ and continuity of $\psi$ and $f$ in $\bx$ guarantee that
\begin{align*}
  \limsup_{\zeta\to 0} \check\E\big[\hat J^{F,\zeta}(t,\bx;\check u^{\varrho,\eps(\zeta)})-\hat J^{F,*}(t,\bx;\hat u^\varrho)\big] \leq 0,
\end{align*}
as $\zeta\to 0$.
\end{proof}

\medskip

Clearly, $\hat J^{F,\zeta}(t,\bx;\check u^{\varrho,\eps})=\check J^{\mcR,F,\zeta}(t,\bx;\check\nu^{\varrho,\eps})$. However, $\check\nu^{\varrho,\eps}$ is not necessarily bounded and, therefore, the above lemma does not immediately relate the value of the primal control problem to the value of its dual counterpart. On the other hand, the following lemma proves that in terms of the corresponding value function, $\check\nu^{\varrho,\eps}$ can be well approximated by a bounded density.

\begin{lem}\label{lem:check-nu-approx}
For any $\zeta\geq 0$, we have that $\check J^{\mcR,F,\zeta}(t,\bx;\check\nu^n)\to\check J^{\mcR,F,\zeta}(t,\bx;\check\nu)$, $\Prob$-a.s.~as $n\to\infty$.
\end{lem}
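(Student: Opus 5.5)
Since $\check\nu^n$ is not defined explicitly in the statement, I take it to be the truncation $\check\nu^n:=\check\nu\wedge n\in\check\mcV^n$. This is the natural choice given \eqref{ekv:Yn-repr-hat}. The plan is to write both functionals under the reference measure $\check\Prob$ via the Doléans-Dade densities and pass to the limit in $n$ using uniform integrability.

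Set $\Psi^\zeta:=\psi(\tau^{F,\zeta}(\check X^{t,\bx}),\check X^{t,\bx})+\int_t^{\tau^{F,\zeta}(\check X^{t,\bx})} f(r,\check X^{t,\bx},\check I^t_r)dr$. Then
\begin{align*}
\check J^{\mcR,F,\zeta}(t,\bx;\nu)=\check\E\big[\hat\kappa^{\nu}_T\Psi^\zeta\,\big|\,\hat\mcF_t\big]\Big/\check\E\big[\hat\kappa^{\nu}_T\,\big|\,\hat\mcF_t\big],
\end{align*}
and the denominator equals one for $\nu=\check\nu^n$ and for $\nu=\check\nu$. The case $\zeta=0$ is read as $\tau^{F,*}_t$; it requires no change since the stopping rule is fixed.

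\textbf{Step 1: a.s.\ convergence of the densities.} There are only finitely many jumps of $\check\mu$ on $[t,T]$, $\check\Prob$-a.s., and $\check\nu$ is finite at each jump. Hence $\prod_{t<\check\sigma_j\le T}\check\nu^n_{\check\sigma_j}(\check\zeta_j)=\prod_{t<\check\sigma_j\le T}\check\nu_{\check\sigma_j}(\check\zeta_j)$ for all large $n$. By monotone convergence, $\int_t^T\!\int_U\check\nu^n\lambda(de)dr\nearrow\int_t^T\!\int_U\check\nu\lambda(de)dr$. I expect this integral to be a.s.\ finite, because the explicit compensator of Lemma~\ref{lem:will-conv} has finite total mass: it comes from finitely many hazard rates integrated over bounded windows, plus $\hat\pi^l$. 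It follows that $\hat\kappa^{\check\nu^n}_T\to\hat\kappa^{\check\nu}_T$, $\check\Prob$-a.s.

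\textbf{Step 2: uniform integrability (main obstacle).} I will use Scheffé's lemma. Each $\hat\kappa^{\check\nu^n}_T\ge0$ has $\check\E$-mean $1$, because $\check\nu^n$ is bounded. The limit $\hat\kappa^{\check\nu}_T$ also has mean $1$, as already shown in the paper via $\hat M_T\hat\kappa^{\check\nu}_T\equiv1$. Together with the a.s.\ convergence from Step 1, Scheffé gives $\hat\kappa^{\check\nu^n}_T\to\hat\kappa^{\check\nu}_T$ in $L^1(\check\Prob)$, and conditionally in $L^1$ given $\hat\mcF_t$.

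\textbf{Step 3: convergence of $\check\E[\hat\kappa^{\check\nu^n}_T\Psi^\zeta|\hat\mcF_t]$.} The payoff $\Psi^\zeta$ is unbounded, with polynomial growth by Assumption~\ref{ass:oncoeff}. Its moments are controlled uniformly in the law of the control by Proposition~\ref{prop:Xu-stab}, since the growth constant in \eqref{ekv:Xu-growth} does not depend on the control, and hence not on the law of $\check I^t$ under $\check\Prob^{\check\nu^n}$. I will truncate: write $\Psi^\zeta=\Psi^\zeta\ett_{|\Psi^\zeta|\le K}+\Psi^\zeta\ett_{|\Psi^\zeta|>K}$.

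For the bounded part, the $L^1$ convergence from Step 2 gives convergence. For the tail, I will bound $\check\E^{\check\nu^n}[|\Psi^\zeta|\ett_{|\Psi^\zeta|>K}|\hat\mcF_t]\le K^{-1}\check\E^{\check\nu^n}[|\Psi^\zeta|^2|\hat\mcF_t]\le C(1+\|\bx\|_t^{2q})/K$, uniformly in $n$. To justify the moment bound, note that under each $\check\Prob^{\check\nu^n}$ the process $\hat W$ remains a Brownian motion, because the Girsanov change acts only on the jump part. So \eqref{ekv:Xu-growth} applies with the control $\check I^t$, and the same holds under $\hat\Prob=\check\Prob^{\check\nu}$.

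Letting $n\to\infty$ and then $K\to\infty$ yields the claimed a.s.\ convergence. If the finiteness claim in Step 1 turns out to be delicate, I will instead localize on the events $\{\int_t^T\!\int_U\check\nu\,\lambda(de)dr\le N\}$, whose probabilities tend to one.
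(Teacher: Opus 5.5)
Your proposal is correct and follows the paper's route. You write both functionals through the Doléans-Dade densities under $\check\Prob$, and you get $\hat\kappa^{\check\nu^n}_T\to\hat\kappa^{\check\nu}_T$ a.s.\ from the finitely many jumps and the convergence of the compensator integrals. You then split the payoff (the paper on $\{\|\check X^{t,\bx}\|_T\le K\}$, you on $\{|\Psi^\zeta|\le K\}$) and control the tails uniformly in $n$ by \eqref{ekv:Xu-growth}.

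Your Scheffé step is a cleaner justification than the paper's bare appeal to dominated convergence, which never exhibits a dominating function for $|\hat\kappa^{\check\nu^n}_T-\hat\kappa^{\check\nu}_T|$. Use it in conditional form, $\check\E[|\hat\kappa^{\check\nu^n}_T-\hat\kappa^{\check\nu}_T|\,|\,\hat\mcF_t]=2\check\E[(\hat\kappa^{\check\nu}_T-\hat\kappa^{\check\nu^n}_T)^+\,|\,\hat\mcF_t]$, together with $(\hat\kappa^{\check\nu}_T-\hat\kappa^{\check\nu^n}_T)^+\le\hat\kappa^{\check\nu}_T$ and conditional dominated convergence. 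This gives the a.s.\ convergence the statement asks for, not just $L^1$ convergence.
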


\begin{proof}
Letting
\begin{align*}
  \Phi_t(\tau):=\psi(\tau,\check X^{t,\bx})+\int_t^{\tau} f(r,\check X^{t,\bx},\check I^{t}_r)dr,\quad\forall \tau\in\check\mcT_t
\end{align*}
we have
\begin{align*}
  |\Phi_t(\tau)|\leq C(1+\|\check X^{t,\bx}\|^q_T)=:\bar\Phi.
\end{align*}
and get that
\begin{align*}
  |\check J^{\mcR,F,\zeta}(t,\bx;\check\nu^n)-\check J^{\mcR,F,\zeta}(t,\bx;\check\nu)|&\leq \check\E\big[|(\hat\kappa^{\check\nu^n}_T-\hat\kappa^{\check\nu}_T)\Phi_t(\tau^{F,\zeta}(\check X^{t,\bx}))|\,\big|\,\hat\mcF_t\big]
  \\
  &\leq \check\E\big[|\hat\kappa^{\check\nu^n}_T-\hat\kappa^{\check\nu}_T|\bar\Phi\,\big|\,\hat\mcF_t\big].
\end{align*}
We will show that the right-hand side tends to 0, $\check\Prob$-a.s., as $n\to\infty$. Letting $E_K:=\{\omega:\|\check X^{t,\bx}\|_T\leq K\}$, we get
\begin{align*}
  \check\E\big[|\hat\kappa^{\check\nu^n}_T-\hat\kappa^{\check\nu}_T|\bar\Phi\big|\hat\mcF_{t}\big]\leq \check\E\big[\ett_{E_K}|\hat\kappa^{\check\nu^n}_T-\hat\kappa^{\check\nu}_T|\bar\Phi\big|\hat\mcF_{t}\big] + \check\E^{\check\nu^n}\big[\ett_{E_K^c}\bar\Phi\big|\hat\mcF_{t}\big]+ \check\E^{\check\nu}\big[\ett_{E_K^c}\bar\Phi\big|\hat\mcF_{t}\big].
\end{align*}
Concerning the middle term, we have
\begin{align*}
  \check\E^{\check\nu^n}\big[\ett_{E_K^c}\bar\Phi\big|\hat\mcF_{t}\big]&= C\check\E^{\check\nu^n}\big[\ett_{E_K^c}(1+\|\check X^{t,\bx}\|^q_T)\big|\hat\mcF_{t}\big]
  \\
  &\leq \frac{C}{K}\check\E^{\check\nu^n}\big[\|\check X^{t,\bx}\|_T(1+\|\check X^{t,\bx}\|^q_T)\big|\hat\mcF_{t}\big]
  \\
  &\leq \frac{C}{K}(1+\|\bx\|^{q+1}_{t})
\end{align*}
and similarly for the last term, where $C>0$ does not depend on $K$ or $n$. On the other hand, by dominated convergence we have
\begin{align*}
\int_{0}^T\int_U(1-\check\nu^n_r(e))\lambda(de)dr \to \int_{0}^T\int_U(1-\check\nu_r(e))\lambda(de)dr,
\end{align*}
$\check\Prob$-a.s.~and
\begin{align*}
  \prod_{\hat\sigma_j\leq s}\check\nu^n_{\hat\sigma_j}(\hat\zeta_j)\to \prod_{\hat\sigma_j\leq s}\check\nu_{\hat\sigma_j}(\hat\zeta_j),
\end{align*}
$\check\Prob$-a.s.~as the number of terms in the product is $\check\Prob$-a.s.~finite effectively implying that $\hat\kappa^{\check\nu^n}_T\to \hat\kappa^{\check\nu}_T$, $\check\Prob$-a.s. Concerning the first term, we thus have
\begin{align*}
\check\E\big[\ett_{E_K}|\hat\kappa^{\check\nu^n}_{T}-\hat\kappa^{\check\nu}_T|\bar\Phi\big|\hat\mcF_{t}\big] &\leq C\check\E\big[|\hat\kappa^{\check\nu^n}_T-\hat\kappa^{\check\nu}_T|(1+K^{q})\big|\hat\mcF_{t}\big],
\end{align*}
where the latter tends to 0, $\hat\Prob$-a.s., as $n\to\infty$ by dominated convergence and the fact that $\hat\kappa^{\check\nu^n}_T\to\hat\kappa^{\check\nu}_T$, $\hat\Prob$-a.s. For each $K>0$, there is thus a $\check\Prob$-null set $E\subset\hat\mcF$ such that
\begin{align*}
  \lim_{n\to\infty}\check\E\big[|\hat\kappa^{\check\nu^n}_T-\hat\kappa^{\check\nu}_T|\bar\Phi\big|\hat\mcF_{t}\big]&\leq \frac{C}{K}(1+\|\bx\|^{q+1}_{t})
\end{align*}
on $\hat\Omega\setminus E$. Since $K>0$ was arbitrary, we conclude that the left-hand side equals 0, $\check\Prob$-a.s.
\end{proof}

\medskip

\begin{proof}[Proof of \eqref{ekv:new-duality}]
That $v^\mcR\geq v$ is evident from Theorem~\ref{thm:zs-game} and Lemma~\ref{lem:equiv}. By the definition of $u^\varrho$ and Lemma~\ref{lem:hat-zeta-approx}, for all sufficiently small $\zeta>0$ there exists $\eps>0$ such that
\begin{align*}
  v^{F,*}(t,\bx)\geq \hat J^{F,*}(t,\bx;u^\varrho)-\varrho \geq \hat J^{F,\zeta}(t,\bx;\check u^{\varrho,\eps})-2\varrho.
\end{align*}
Moreover,
\begin{align*}
  \hat J^\zeta(t,\bx;\check u^{\varrho,\eps})=\check J^{\mcR,F,\zeta}(t,\bx;\check \nu^{\varrho,\eps}).
\end{align*}
Furthermore, by Lemma~\ref{lem:check-nu-approx}
\begin{align*}
  \check J^{\mcR,F,\zeta}(t,\bx;\check \nu^{\varrho,\eps})
  = \lim_{n\to\infty}\check J^{\mcR,F,\zeta}(t,\bx;\check\nu^{\varrho,\eps,n})
  \geq v^{\mcR,F,\zeta}(t,\bx).
\end{align*}
Combining the above estimates yields
\begin{align*}
  v^{F,*}(t,\bx)\geq v^{\mcR,F,\zeta}(t,\bx)-2\varrho.
\end{align*}
Letting $\zeta\searrow 0$ and using that $\varrho>0$ was arbitrary yields $v^{F,*}(t,\bx)\geq v^{\mcR}(t,\bx)$ and thus $v^{F,*}(t,\bx)= v(t,\bx)$ for all $(t,\bx)\in [0,T]\times\bfC^d$.
\end{proof}

\section{Application to nonzero-sum controller-stopper games\label{sec:nz}}

In this section, we consider the nonzero-sum game between a controller and a stopper. The objective of the controller is now to maximize a different payoff
\begin{align*}
  J^C(t,\bx;u,\tau^F(X^u))&:=\E\Big[\psi^C(\tau^F(X^{t,\bx;u}),X^u)+\int_0^{\tau^F(X^{t,\bx;u})}f^C(t,X^{t,\bx;u},u_t)dt\Big].
\end{align*}
The stopper, on the other hand, selects a feedback stopping strategy $\tau^F \in \mathcal{T}^F$, that maximizes her payoff given the controller's action,
\begin{align*}
  J^S(t,\bx;u,\tau^F(X^u))&:=\E\Big[\psi^S(\tau^F(X^{t,\bx;u}),X^{t,\bx;u})+\int_0^{\tau^F(X^{t,\bx;u})}f^S(t,X^{t,\bx;u},u_t)dt\Big].
\end{align*}
Assuming that the coefficients $(f^C,\psi^C)$ and $(f^S,\psi^S)$ satisfy the conditions on $f,\psi$ in Assumption~\ref{ass:oncoeff}, we now apply the results of the previous sections to show that the feedback stopping rule
\begin{align*}
  \tau^{F,*}_t:\bfC^d\to [t,T],\quad
  \bx\mapsto\inf\{s\geq t: v^S(s,\bx)=\psi^S(s,\bx)\}\in\mcT^F_t,
\end{align*}
constructed from the zero-sum game associated with the stopper’s reward functional,
\begin{align}\label{ekv:Nash-zs}
  v^S(t,\bx)
  := \esssup_{\tau^F\in\mcT^F_t}\essinf_{u\in\mcU_t}J^S(t,\bx;u,\tau^F(X^{t,\bx;u}))
  = \essinf_{u\in\mcU_t}\esssup_{\tau\in\mcT^F_t}J^S(t,\bx;u,\tau^F(X^{t,\bx;u})),
\end{align}
naturally induces approximate equilibria in the nonzero-sum setting.

More precisely, we prove that for any $\varepsilon>0$, there exists a control $u^\eps\in\mcU_t$ such that the pair $(u^\eps,\tau^{F,*})$ constitutes an $\varepsilon$-Nash equilibrium. This provides a canonical candidate for the stopper’s strategy, even though an exact Nash equilibrium in pure strategies need not exist in general. This finding is the main result of the present section and is summarized in the following proposition:
\begin{prop}\label{prop:eps-Nash}
For each $(t,\bx)\in [0,T]\times\bfC^d$ and $\eps>0$, there exists a $u^\eps\in\mcU_t$ such that
\begin{align*}
  \begin{cases}
    J^C(t,\bx;u^\eps,\tau^{F,*}_t(X^{t,\bx;u^\eps}))\geq J^C(t,\bx;u,\tau^{F,*}_t(X^{t,\bx;u}))-\eps\\
    J^S(t,\bx;u^\eps,\tau^{F,*}_t(X^{t,\bx;u^\eps}))\geq J^S(t,\bx;u^\eps,\tau^F(X^{t,\bx;u^\eps}))-\eps
  \end{cases}
\end{align*}
for all $(u,\tau^F)\in\mcU_t\times\mcT^F_t$.
\end{prop}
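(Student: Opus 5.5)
Once $\tau^{F,*}_t$ is fixed, the controller faces a one-player problem: maximize $u\mapsto J^C(t,\bx;u,\tau^{F,*}_t(X^{t,\bx;u}))$ over $\mcU_t$. The natural candidate for $u^\eps$ is therefore an $\eps$-optimal control for this stopped problem. The first inequality then holds by construction. The whole difficulty moves to the second inequality: the stopper's rule $\tau^{F,*}_t$ must remain $\eps$-optimal against this particular $u^\eps$, even though $u^\eps$ was chosen without regard to $J^S$.

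The plan for the second inequality rests on Theorem~\ref{thm:tau-F-robust} and the first contact principle, applied to the zero-sum game \eqref{ekv:Nash-zs} with data $(\psi^S,f^S)$. That theorem gives $v^S=v^{F,*}$ and the nonlinear Snell envelope structure. Rather than only the value identity, the pathwise statement is needed. For any fixed $u\in\mcU_t$, I want to show that the process $s\mapsto v^S(s,X^{t,\bx;u})+\int_t^s f^S(r,X^{t,\bx;u},u_r)dr$ has two properties:
\begin{enumerate}[(a)]
\item it dominates $\psi^S(s,X^{t,\bx;u})+\int_t^s f^S\,dr$;
\item it is a $\Prob$-supermartingale on $[t,T]$ in the sense that $\E[\,\cdot\,]$ at any later stopping time is at most its value at an earlier one, up to an essinf over the continuation control. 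The precise form is the dynamic programming inequality
\begin{align*}
v^S(\tau,X^{t,\bx;u})\ \ge\ \E\Big[\psi^S(\tau',X^{t,\bx;u})+\int_\tau^{\tau'}f^S\,dr\,\Big|\,\mcF_\tau\Big]\ \text{fails in general}.
\end{align*}
\end{enumerate}
Because of this failure, I would switch to a modification of the construction. Choose $u^\eps$ so that it is simultaneously $\eps/2$-optimal for the controller in $J^C$ \emph{and} does not let the stopper gain. The cleanest route is to take $u^\eps$ as an $\eps$-optimal control for the controller among controls $u$ satisfying the saddle-type property $\esssup_{\tau^F}J^S(t,\bx;u,\tau^F(X^u))\le J^S(t,\bx;u,\tau^{F,*}_t(X^u))+\eps$.

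For every $u$ I would establish $\esssup_{\tau}J^S(t,\bx;u,\tau)=J^S(t,\bx;u,\tau^{F,*}_t(X^u))$ only when $u$ is optimal in \eqref{ekv:Nash-zs}. For general $u$ I would instead prove the following local statement. Given any $u$, define a modified control $\bar u$ that follows $u$ up to a stopping time and afterwards an $\eps$-optimal minimizing control for $J^S$ in the zero-sum game started at $(s,X^{t,\bx;u})$. Concatenation on $\mcU$ is allowed since controls are only progressively measurable. With this $\bar u$, $v^S(\cdot,X^{\bar u})+\int f^S$ becomes an approximate supermartingale (via Theorem~\ref{thm:tau-F-robust} at the restart point), and it is an approximate martingale up to $\tau^{F,*}_t$. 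Consequently $\tau^{F,*}_t$ is $\eps$-optimal for the stopper against $\bar u$.

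The remaining question is whether such a modification costs the controller more than $\eps$ in $J^C$. The key observation is that after $\tau^{F,*}_t(X^u)$ the game is over, so the controller's payoff is unaffected by what it does after $\tau^{F,*}_t$. I would therefore let $u^\eps$ agree with the $J^C$-$\eps$-optimal control $\hat u$ on $[t,\tau^{F,*}_t(X^{\hat u})]$ and switch to the $J^S$-minimizing continuation only after that time. Then $J^C(u^\eps,\tau^{F,*})=J^C(\hat u,\tau^{F,*})$, since $X^{u^\eps}=X^{\hat u}$ up to that time and $\tau^{F,*}_t$ is non-anticipative. Stopping later than $\tau^{F,*}_t$ is then at most $\eps$-profitable for the stopper by the supermartingale property of $v^S$ under the minimizing continuation.

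The main obstacle is stopping earlier: for $s<\tau^{F,*}_t$, $v^S>\psi^S$ only gives $\psi^S(s)<v^S(s)$, and $v^S(s,X^{\hat u})$ need not equal the continuation value under $\hat u$. I expect to need a measurable-selection or pasting argument, built from the $\eps$-optimal controls used in the proof of Proposition~\ref{prop:dual-CP}, to handle this case, and I regard it as the crux of the proof.
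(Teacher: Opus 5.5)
Your final construction is the paper's own. You keep the controller's $\eps$-optimal control $\hat u$ on $[t,\tau^\eps]$, with $\tau^\eps:=\tau^{F,*}_t(X^{t,\bx;\hat u})$, and paste afterwards an $\eps$-optimal minimizing continuation for the zero-sum game with data $(\psi^S,f^S)$; this is the paper's $u^\eps=u^{1,\eps}\otimes_{\tau^{F,*}_t}u^{2,\eps}$. It gives the first inequality, and it also gives the ``stopping later'' bound $J^S(t,\bx;u^\eps,\tau)\le J^S(t,\bx;u^\eps,\tau\wedge\tau^{F,*}_t(X^{t,\bx;u^\eps}))+\eps$. What is needed there is not a supermartingale property. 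It is $J^S(\tau^\eps,X^{t,\bx;\hat u};u^{2,\eps},\tau)\le v^S(\tau^\eps,X^{t,\bx;\hat u})+\eps=\psi^S(\tau^\eps,X^{t,\bx;\hat u})+\eps$ for all $\tau\ge\tau^\eps$, with $u^{2,\eps}$ chosen $\Prog(\bbF)$-measurably at a random state. The paper obtains this from a countable partition of $\Lambda$, using continuity of $v^S$ and separability of $\bfC^d$.

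The proposal is nevertheless not a proof. You leave the case of stopping before $\tau^{F,*}_t$ open, call it the crux, and defer it to an unspecified measurable-selection argument. That is a genuine gap, and the tool you point to is not what is needed.

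The missing observation is that you only need an inequality, not the equality you worry about. Theorem~\ref{thm:tau-F-robust}, applied to $(\psi^S,f^S)$, gives $v^S(s,\by)=\essinf_{u}J^S(s,\by;u,\tau^{F,*}_s(X^{s,\by;u}))$. So $v^S$ is a lower bound for the stopper's payoff under $\tau^{F,*}$ against \emph{every} control, in particular against the continuation of $u^\eps$. Moreover, for $s$ up to the hitting time, the rule restarted at $s$ gives the same time as $\tau^{F,*}_t$.

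Set $\tau^*:=\tau^{F,*}_t(X^{t,\bx;u^\eps})$. Use $\psi^S\le v^S$, then apply this lower bound at the random initial condition $(\tau\wedge\tau^*,X^{t,\bx;u^\eps})$ together with the tower property. For every $\tau\in\mcT_t$ this yields
\[
J^S(t,\bx;u^\eps,\tau\wedge\tau^*)\le\E\Big[v^S(\tau\wedge\tau^*,X^{t,\bx;u^\eps})+\int_t^{\tau\wedge\tau^*}f^S(s,X^{t,\bx;u^\eps},u^\eps_s)ds\,\Big|\,\mcF_t\Big]\le J^S(t,\bx;u^\eps,\tau^*),
\]
with no error term. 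Chaining this with the late-stopping bound gives $J^S(t,\bx;u^\eps,\tau)\le J^S(t,\bx;u^\eps,\tau^*)+\eps$. Since $\tau^F(X^{t,\bx;u^\eps})\in\mcT_t$ for every $\tau^F\in\mcT^F_t$, the second inequality follows.

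So the case you regard as hardest is the one that comes for free from the first contact principle. The supermartingale and saddle-constraint detours can be dropped, including item (b), which you yourself note fails.
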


\noindent\emph{Proof.} By the definition of the essential supremum, there exists a $u^{\eps,1}\in\mcU_t$ such that
\begin{align}\label{ekv:Nash-cont-opt}
  J^C(t,\bx;u^{\eps,1},\tau^{F,*}_t(X^{t,\bx;u^{\eps,1}}))\geq J^C(t,\bx; u,\tau^{F,*}_t(X^{t,\bx; u}))-\eps
\end{align}
for all $u\in\mcU_t$. Note that the left-hand side of the above equation does not depend on the values that $u^{\eps,1}$ takes on $[\tau^{F,*}_t(X^{t,\bx;u^{\eps,1}}),T]$. This allows us to manipulate the control in order to ensure that stopping at $\tau^{F,*}_t(X^{t,\bx;u^{\eps,1}})$ is an $\eps$-optimal decision for the stopper.

To save notation we extend the definition of the concatenation operator by letting, for each $\tau^F\in\mcT^F_t$ and $u^{1},u^2\in\mcU_t$,
\begin{align*}
  (u^{1}\otimes_{\tau^{F}}u^2)_s:=u^{1}_s\ett_{[0, \tau^{F}(X^{t,\bx;u^{1}})]}(s)+u^2_s\ett_{(\tau^{F}(X^{t,\bx;u^{1}}),T]}(s).
\end{align*}
By standard arguments, the control $u^{1}\otimes_{\tau^{F}}u^{2}$ is $\Prog(\bbF)$-measurable and therefore belongs to $\mcU_t$.

We divide the remainder of the proof into two steps:\\

\noindent\emph{Step 1.} We show that there is a $u^{2,\eps}\in\mcU_{\tau^{F,*}_t(X^{t,\bx;u^{1,\eps}})}$ such that $u^\eps:=u^{1,\eps}\otimes_{\tau^{F}}u^{2,\eps}$ satisfies
\begin{align}\label{ekv:tau-f-star-eps-opt-1}
  J^S(t,\bx;u^\eps,\tau\wedge\tau^{F,*}(X^{t,\bx;u^\eps}))\geq J^S(t,\bx;u^\eps,\tau)-\eps,\quad\forall \tau\in\mcT_{t}.
\end{align}
To accomplish this we first show that there is a $u^{2,\eps}\in\mcU_{\tau^{F,*}_t(X^{t,\bx;u^{1,\eps}})}$ such that
\begin{align}\label{ekv:u-2-cont-opt}
  J^S(\tau^{F,*}_t(X^{t,\bx;u^{1,\eps}}),X^{t,\bx;u^{1,\eps}};u^{2,\eps},\tau) \leq \psi^S(\tau^{F,*}_t(X^{t,\bx;u^{1,\eps}}),X^{t,\bx;u^{1,\eps}}) + \eps,
\end{align}
$\Prob$-a.s.~for any $\tau\in\mcT_{\tau^{F,*}_t(X^{t,\bx;u^{1,\eps}})}$. Using the notation $\tau^\eps:=\tau^{F,*}_t(X^{t,\bx;u^{1,\eps}})$, we find that since $v^S(\tau^\eps,X^{t,\bx;u^{1,\eps}})=\psi^S(\tau^{\eps},X^{t,\bx;u^{1,\eps}})$, \eqref{ekv:u-2-cont-opt} is equivalent to
\begin{align}\label{ekv:u-2-cont-opt-2}
  J^S(\tau^\eps,X^{t,\bx;u^{1,\eps}};u^{2,\eps},\tau)\leq v^S(\tau^\eps,X^{t,\bx;u^{1,\eps}}) + \eps,
\end{align}
$\Prob$-a.s.~for any $\tau\in\mcT_{\tau^{\eps}}$. Since $v^S$ and $f$ are continuous and of polynomial growth, while $\psi$ is left upper semi-continuous in $t$ and continuous in $\bx$ and of polynomial growth, and $\bfC^d$ is separable, there exists a sequence of sets $(\mcO^\eps_i)_{i\in\bbN}$, with $\mcO^\eps_i\subset \Lambda^{t,\bx}$ and a corresponding sequence $(t_i,\bx_i)_{i\in\bbN}$, with $(t_i,\bx_i)\in\mcO^\eps_i$, such that $\tilde t\leq t_i$, $|v(t_i,\bx_i)-v(\tilde t,\tilde\bx)|\leq \eps/3$ and
\begin{align}\label{ekv:J-app-in-mcO}
  \E\big[J^S(t_i,\bx_i;u,\tau\vee t_i)\,\big|\,\mcF_{\tilde t}\big]\leq J^S(\tilde t,\tilde \bx;u,\tau \vee \tilde t)+\eps/3,\quad\Prob\text{-a.s.}
\end{align}
for all $(\tilde t,\tilde\bx)\in\mcO^\eps_i$ and $(\tau,u)\in\mcT_t\times\mcU_t$. To obtain the latter inequality on the sets $\mcO^\eps_i$, we follow along the lines of the proof of Lemma 4.4 in \cite{cont-stop-P1}.

Now, for each $i\in\bbN$, there is a $u^{2,\eps,i}\in\mcU_{t}$ such that
\begin{align*}
  v(t_i,\bx_i)&\geq \esssup_{\tau\in\mcT_{t_i}}J^S(t_i,\bx_i;u^{2,\eps,i},\tau)-\eps/3.
\end{align*}
Consequently,
\begin{align*}
  \E\big[v^S(\tau^\eps,X^{t,\bx;u^{1,\eps}})\,\big|\,\mcF_{t}\big] & \geq \E\Big[\sum_{i\in\bbN}\ett_{\mcO^\eps_i}(\tau^\eps,X^{t,\bx;u^{1,\eps}}) v(t_i,\bx_i)\,\Big|\,\mcF_t\Big]-\eps/3
  \\
  & \geq \E\Big[\sum_{i\in\bbN}\ett_{\mcO^\eps_i}(\tau^\eps,X^{t,\bx;u^{1,\eps}}) J^S(t_i,\bx_i;u^{2,\eps,i},\tau)\,\Big|\,\mcF_t\Big]-2\eps/3
  \\
  & \geq \E\Big[\sum_{i\in\bbN}\ett_{\mcO^\eps_i}(\tau^\eps,X^{t,\bx;u^{1,\eps}}) J^S(\tau^\eps,X^{t,\bx;u^{1,\eps}};u^{2,\eps,i},\tau)\,\Big|\,\mcF_t\Big]-\eps,
\end{align*}
$\Prob$-a.s.~for any $\tau\in\mcT_{\tau^{\eps}}$. On the other hand, the control
\begin{align*}
  u^{2,\eps}:=\sum_{i\in\bbN}\ett_{\mcO^\eps_i}(\tau^\eps,X^{t,\bx;u^{1,\eps}}) u^{2,\eps,i}
\end{align*}
is $\Prog(\bbF)$-measurable and therefore belongs to $\mcU_{\tau^\eps}$, while satisfying \eqref{ekv:u-2-cont-opt-2}. Letting $u^\eps:=u^{\eps,1}\otimes_{\tau^{F,*}_t}u^{2,\eps}$ we thus have that
\begin{align*}
  &J^S(t,\bx;u^\eps,\tau)=\E\Big[\psi^S(\tau,X^{t,\bx;u^\eps})+\int_t^{\tau}f^S(s,X^{t,\bx;u^\eps},u^\eps_s)ds\,\Big|\,\mcF_t\Big]
  \\
  &=\E\Big[\ett_{[\tau\leq \tau^{F,*}_t(X^{t,\bx;u^\eps})]}\Big(\psi^S(\tau,X^{t,\bx;u^\eps})+\int_t^{\tau}f^S(s,X^{t,\bx;u^\eps},u^\eps_s)ds\Big)
  \\
  &\quad +\ett_{[\tau > \tau^{F,*}_t(X^{t,\bx;u^\eps})]}\Big(\int_t^{\tau^{F,*}_t(X^{t,\bx;u^{\eps}})}f^S(s,X^{t,\bx;u^\eps},u^\eps_s)ds +J^S(\tau^{F,*}_t(X^{t,\bx;u^{\eps}}),X^{t,\bx;u^{\eps}};u^{2,\eps},\tau)\Big)\,\Big|\,\mcF_t\Big]
  \\
  &\leq\E\Big[\ett_{[\tau\leq \tau^{F,*}_t(X^{t,\bx;u^\eps})]}\Big(\psi^S(\tau,X^{t,\bx;u^\eps})+\int_t^{\tau}f^S(s,X^{t,\bx;u^\eps},u^\eps_s)ds\Big)
  \\
  &\quad+\ett_{[\tau > \tau^{F,*}_t(X^{t,\bx;u^\eps})]}\Big(\int_t^{\tau^{F,*}_t(X^{t,\bx;u^{\eps}})}f^S(s,X^{t,\bx;u^\eps},u^\eps_s)ds +\psi^S(\tau\wedge \tau^{F,*}_t(X^{t,\bx;u^\eps}),X^{t,\bx;u^\eps})+\eps\Big)\,\Big|\,\mcF_t\Big]
  \\
  &\leq \E\Big[\psi^S(\tau\wedge \tau^{F,*}_t(X^{t,\bx;u^\eps}),X^{t,\bx;u^\eps})+\int_t^{\tau\wedge \tau^{F,*}_t(X^{t,\bx;u^\eps})}f^S(s,X^{t,\bx;u^\eps},u^\eps_s)ds\,\Big|\,\mcF_t\Big]+\eps.
\end{align*}
In particular, it follows that \eqref{ekv:tau-f-star-eps-opt-1} holds.\\

\noindent\emph{Step 2.} We extend \eqref{ekv:tau-f-star-eps-opt-1} by proving that
\begin{align}\label{ekv:tau-f-star-eps-opt-2}
  J^S(t,\bx;u^\eps,\tau^{F,*}(X^{t,\bx;u^\eps}))\geq J^S(t,\bx;u^\eps,\tau)-\eps,\quad\forall \tau\in\mcT_{t}.
\end{align}
Since $v^S$ is the corresponding value function and immediately stopping is always an option, we have that $v^S\geq \psi^S$. We pick an arbitrary $\tau\in\mcT_t$ and thus have by \eqref{ekv:tau-f-star-eps-opt-1} that
\begin{align}\label{ekv:Nash-steg}
  J^S(t,\bx;u^\eps,\tau)&\leq \E\Big[v^S(\tau\wedge \tau^{F,*}_t(X^{t,\bx;u^\eps}),X^{t,\bx;u^\eps})+\int_t^{\tau\wedge \tau^{F,*}_t(X^{t,\bx;u^\eps})}f^S(s,X^{t,\bx;u^\eps},u^\eps_s)ds\,\Big|\,\mcF_t\Big]+\eps.
\end{align}
On the other hand, since $\tau^{F,*}_t$ is an optimal feedback stopping rule, we find by Theorem~\ref{thm:tau-F-robust} that
\begin{align*}
  v^S(\tau\wedge \tau^{F,*}_t(X^{t,\bx;u^\eps}),X^{t,\bx;u^\eps}) & \leq J^S(\tau\wedge \tau^{F,*}_t(X^{t,\bx;u^\eps}),X^{t,\bx;u^\eps};u^\eps,\tau^{F,*}_t(X^{t,\bx;u^\eps})).
\end{align*}
Substituting this into \eqref{ekv:Nash-steg} gives
\begin{align}\nonumber
  J^S(t,\bx;u^\eps,\tau)&\leq \E\Big[J^S(\tau\wedge \tau^{F,*}_t(X^{t,\bx;u^\eps}),X^{t,\bx;u^\eps};u^\eps,\tau^{F,*}_t(X^{t,\bx;u^\eps}))
  \\
  &\quad+\int_t^{\tau\wedge \tau^{F,*}_t(X^{t,\bx;u^\eps})}f^S(s,X^{t,\bx;u^\eps},u^\eps_s)ds\,\Big|\,\mcF_t\Big]+\eps\nonumber
  \\
  &= \E\Big[\psi^S(\tau^{F,*}_t(X^{t,\bx;u^\eps}),X^{t,\bx;u^\eps}) + \int_t^{\tau^{F,*}_t(X^{t,\bx;u^\eps})}f^S(s,X^{t,\bx;u^\eps},u^\eps_s)ds\,\Big|\,\mcF_t\Big]+\eps\nonumber
  \\
  &=J^S(t,\bx;u^\eps,\tau^{F,*}_t(X^{t,\bx;u^\eps}))+\eps.\label{ekv:Nash-stop-opt}
\end{align}
Combining inequalities \eqref{ekv:Nash-cont-opt} and \eqref{ekv:Nash-stop-opt} establishes that the pair $(u^\eps,\tau^{F,*}_t)$ is an $\eps$-Nash equilibrium.\qed\\

The above proposition shows that $\tau^{F,*}$ appears in an $\varepsilon$-Nash equilibrium for any $\varepsilon>0$. It is therefore natural to regard $\tau^{F,*}$ as a plausible component of a pure Nash equilibrium. However, the supremum in
\begin{align}\label{ekv:controller-problem}
  \esssup_{u\in\mcU_t} J^C(t,\bx;u,\tau^{F,*}_t(X^{t,\bx;u}))
\end{align}
need not be attained by any admissible (strict) control $u^*\in\mcU_t$.

Indeed, if $(u^n)_{n\in\mathbb N}$ is a maximizing sequence for \eqref{ekv:controller-problem}, one typically only obtains convergence in law of the associated state processes (possibly along a subsequence), and only under restrictive conditions on the coefficients $a$, $\sigma$, $\psi^C$ and $f^C$ (see \eg \cite{FlemmingSoner2006}). Moreover, since $\tau^{F,*}$ is the first exit time from an open domain, convergence of the state process generally yields only lower semi-continuity of the form
\begin{align*}
  \liminf_{n\to\infty} \tau^{F,*}_t(X^{t,\bx;u^n})
  \;\ge\;
  \tau^{F,*}_t(X^{t,\bx;u}),
\end{align*}
which is insufficient to pass optimality to the limit.

To ensure that an optimal strict control exists and attains the supremum in \eqref{ekv:controller-problem}, additional structural assumptions are therefore required. In particular, conditions such as $f^C\le 0$ and monotonicity of the terminal reward, in this case that the map $t\mapsto \psi^C(t,\bx)$ is non-increasing for each $\bx\in\bfC^d$, are natural sufficient conditions ensuring optimality of the limiting control process.

\subsection{Nash equilibria and optimal control problems with discretionary stopping}
The $\varepsilon$-Nash equilibrium constructed above exhibits a pessimistic structure from the perspective of the stopper. Indeed, the value function $v^S$ is defined via a worst-case optimization over the controller’s actions, so that the feedback stopping rule $\tau^{F,*}$ is optimal under the assumption that the controller acts so as to minimize the stopper’s payoff. In this sense, the stopper behaves as if the controller were adversarial, even though the underlying game is nonzero-sum.

A more satisfactory approach is obtained by using the same ideas to derive an alternative formulation of the nonzero-sum game as a combined control and stopping problem for the controller. More precisely, one may view the controller as selecting both a control $u\in\mcU_t$ and a stopping rule $\tau^F\in\mcT^F_t$, subject to the constraint that $\tau^F$ takes values in the set
\[
  \mcT^{A}_t
  :=
  \Big\{
    \tau^F\in\mcT^F_t:
    v^S\big(\tau^F(X^{t,\bx;u}),X^{t,\bx;u}\big)
    =
    \psi^S\big(\tau^F(X^{t,\bx;u}),X^{t,\bx;u}\big)
  \Big\}.
\]
That is, the controller may only induce stopping at times at which stopping is optimal in the corresponding zero-sum game. In particular, stopping at times strictly before $\tau^F$ cannot be optimal for the stopper, since $v^S>\psi^S$ outside of the stopping region.

Within this formulation, the feedback rule $\tau^{F,*}_t$ corresponds to the minimal element of $\mcT^{A}_t$. In contrast, implementing other elements of $\mcT^{A}_t$ may be beneficial to both the controller and the stopper, unless $f^C\leq 0$ and the map $s\mapsto \psi^C(s,\tilde\bx)$ is non-increasing on $[t,T]$ for any $\tilde\bx\in\bfC^d$ satisfying $\tilde\bx\ett_{[0,t]}=\bx\ett_{[0,t]}$.

However, not every element of $\mcT^{A}_t$ can arise as part of an $\eps$-Nash equilibrium. To make this precise, let $(u,\tau^F)\in\mcU_t\times\mcT^F_t$ and define the $\bbF$-adapted, \cadlag process $(Y^{S,u,\tau^F}_s)_{s\in[t,T]}$ by
\begin{align*}
  Y^{S,u,\tau^F}_s
  =
  \E\Big[
    \psi^S\big(\tau^F(X^{t,\bx;u}),X^{t,\bx;u}\big)
    +
    \int_s^{\tau^F(X^{t,\bx;u})}
      f^S(r,X^{t,\bx;u},u_r)\,dr
    \,\Big|\,\mcF_s
  \Big],
  \quad \forall s\in [t,\tau^F(X^{t,\bx;u})].
\end{align*}
Then $u$ can be extended beyond $\tau^F(X^{t,\bx;u})$ in a way that $(u,\tau^F)$ forms an $\eps$-Nash equilibrium whenever
\begin{align*}
  \begin{cases}
    J^C(t,\bx;u,\tau^F(X^{t,\bx;u}))
    \geq
    J^C(t,\bx;\tilde u,\tau^F(X^{t,\bx;\tilde u})) - \eps,
    & \forall \tilde u\in\mcU_t, \\[0.5em]
    Y^{S,u,\tau^F}_s
    \geq
    \psi^S(s,X^{t,\bx;u}) - \eps,
    & \forall s\in [t,\tau^F(X^{t,\bx;u})].
  \end{cases}
\end{align*}

The second condition can be interpreted as a dynamic participation constraint for the stopper: at any time prior to $\tau^F(X^{t,\bx;u})$, deviating to immediate stopping does not improve her payoff by more than $\eps$. This effectively reduces the strategic interaction to a single-agent control problem with discretionary stopping under incentive compatibility constraints.

\bibliographystyle{plain}
\bibliography{cont-stop-P2_ref}

\begin{thebibliography}{10}

\bibitem{Bandini18}
E.~Bandini, A.~Cosso, M.~Fuhrman, and H.~Pham.
\newblock Backward sdes for optimal control of partially observed
  path-dependent stochastic systems: a control randomization approach.
\newblock {\em Ann. Appl. Probab.}, 28(3):1634--1678, 2018.

\bibitem{BayraktarHuang2013}
E.~Bayraktar and Y.-J. Huang.
\newblock On the multidimensional controller-and-stopper games.
\newblock {\em SIAM J. Control Optim}, 51(2):1263--1297, 2013.

\bibitem{BayraktarYao14}
E.~Bayraktar and S.~Yao.
\newblock On the robust optimal stopping problem.
\newblock {\em SIAM J. Control Optim.}, 52(5):3135--3175, 2014.

\bibitem{Bodnariu2024}
A.~Bodnariu and K.~Lindensj{\"o}.
\newblock A controller-stopper-game with hidden controller type.
\newblock {\em Stochastic Process. Appl.}, 173, 2024.

\bibitem{Choukroun15}
S.~Choukroun, A.~Cosso, and H.~Pham.
\newblock Reflected bsdes with nonpositive jumps, and controller-and-stopper
  games.
\newblock {\em Stochastic Process. Appl.}, 125:597--633, 2015.

\bibitem{EkstromSalami}
E.~Ekstr{\"o}m, K.~Lindensj{\"o}, and M.~Olofsson.
\newblock How to detect a salami slicer: A stochastic controller-and-stopper
  game with unknown competition.
\newblock {\em SIAM J. Control Optim}, 60(1):545--574, 2022.

\bibitem{EkstromDeFinetti}
E.~Ekstr{\"o}m, A.~Milazzo, and M.~Olofsson.
\newblock The de {Finetti} problem with uncertain competition.
\newblock {\em SIAM J. Control Optim}, 61(5):2997--3017, 2023.

\bibitem{FlemmingSoner2006}
W.~H. Fleming and H.~M. Soner.
\newblock {\em Controlled Markov Processes and Viscosity Solutions}.
\newblock Springer Science+Business Media, Inc., second edition, 2006.

\bibitem{Fuhrman2020}
M.~Fuhrman and M.~Morlais.
\newblock Optimal switching problems with an infinite set of modes: An approach
  by randomization and constrained backward sdes.
\newblock {\em Stochastic Process. Appl.}, 130:5(5):3120--3153, 2020.

\bibitem{Fuhrman15}
M.~Fuhrman and H.~Pham.
\newblock Randomized and backward sde representation for optimal control of
  non-markovian sdes.
\newblock {\em Ann. Appl. Probab.}, 25(4):2134--2167, 2015.

\bibitem{Karatzas2001}
I.~Karatzas and W.~D. Sudderth.
\newblock The controller-and-stopper game for a linear diffusion.
\newblock {\em Ann. Probab.}, 29(3):1111--1127, 2001.

\bibitem{Zamfirescu08}
I.~Karatzas and I.-M. Zamfirescu.
\newblock Martingale approach to stochastic differential games of control and
  stopping.
\newblock {\em Ann. Probab.}, 36(4):1495--1527, 2008.

\bibitem{Kharroubi2010}
I.~Kharroubi, J.~Ma, H.~Pham, and J.~Zhang.
\newblock Backward sdes with constrained jumps and quasi-variational
  inequalities.
\newblock {\em Ann. Probab.}, 38(2):794--840, 2010.

\bibitem{NutzZhang15}
M.~Nutz and J.~Zhang.
\newblock Optimal stopping under adverse nonlinear expectation and related
  games.
\newblock {\em Ann. Appl. Probab.}, 25(5):2503--2534, 2015.

\bibitem{imp-stop-game}
M.~Perninge.
\newblock Optimal stopping of bsdes with constrained jumps and related zero-sum
  games.
\newblock {\em Stochastic Process. Appl.}, 173, 2024.

\bibitem{cont-stop-P1}
M.~Perninge.
\newblock A nonlinear snell envelope representation for path-dependent
  controller-stopper games.
\newblock {\em arXiv:2606.10494}, 2026.

\bibitem{QuenSul14}
M-C. Quenez and A.~Sulem.
\newblock Reflected bsdes and robust optimal stopping for dynamic risk measures
  with jumps.
\newblock {\em Stochastic Process. Appl.}, 124:3031--3054, 2014.

\bibitem{Weerasinghe2006}
A.~Weerasinghe.
\newblock A controller and a stopper game with degenerate variance control.
\newblock {\em Elect. Comm. in Probab.}, 11:89--99, 2006.

\end{thebibliography}
\end{document}